\numberwithin{equation}{section}
\newtheorem{theorem}{Theorem}[section]
\newtheorem{proposition}[theorem]{Proposition}
\newtheorem{corollary}[theorem]{Corollary}
\newtheorem{lemma}[theorem]{Lemma}
\theoremstyle{definition}
\newtheorem{definition}[theorem]{Definition}
\theoremstyle{remark}
\newtheorem{remark}[theorem]{Remark}
\DeclareMathOperator{\sech}{sech}
\DeclareMathOperator{\cn}{cn}
\DeclareMathOperator{\am}{am}
\DeclareMathOperator{\spt}{spt}
\DeclareMathOperator{\Span}{span}
\DeclareMathOperator{\so}{SO}
\newcommand{\eps}{\varepsilon}
\newcommand{\R}{\mathbf{R}}
\newcommand{\Z}{\mathbf{Z}}
\newcommand{\C}{\mathcal{C}}
\newcommand{\A}{\mathcal{A}}
\newcommand{\N}{\mathbf{N}}
\newcommand{\bS}{\mathbf{S}}
\newcommand{\B}{\mathcal{B}}
\newcommand{\Ll}{\mathcal{L}}
\begin{document}

\title{Regularity and structure of non-planar $p$-elasticae}

\author[F.~Gruen]{Florian Gruen}
\address[F.~Gruen]{Department of Mathematics, Graduate School of Science, Kyoto University, Kitashirakawa Oiwake-cho, Sakyo-ku, Kyoto 606-8502, Japan}
\email{gruen.florian.32r@st.kyoto-u.ac.jp}

\author[T.~Miura]{Tatsuya Miura}
\address[T.~Miura]{Department of Mathematics, Graduate School of Science, Kyoto University, Kitashirakawa Oiwake-cho, Sakyo-ku, Kyoto 606-8502, Japan}
\email{tatsuya.miura@math.kyoto-u.ac.jp}

\keywords{$p$-elastica, regularity, classification, boundary value problem, Li--Yau inequality}
\subjclass[2020]{49Q10, 53A04, and 53E40}

\begin{abstract}
    We prove regularity and structure results for $p$-elasticae in $\R^n$, with arbitrary $p\in (1,\infty)$ and $n\geq2$. 
    Planar $p$-elasticae are already classified and known to lose regularity. 
    In this paper, we show that every non-planar $p$-elastica is analytic and three-dimensional, with the only exception of flat-core solutions of arbitrary dimensions.
    Subsequently, we classify pinned $p$-elasticae in $\R^n$ and, as an application, establish a Li--Yau type inequality for the $p$-bending energy of closed curves in $\R^n$. This extends previous works for $p=2$ and $n\geq2$ as well as for $p\in (1,\infty)$ and $n=2$.
\end{abstract}

\maketitle

\section{Introduction}

The elastica is a classical problem in geometric analysis, dating back to Daniel Bernoulli and Leonhard Euler. 
Originally used to model the bending of an elastic wire in physical space, it has been studied thoroughly from purely mathematical (e.g.\ \cites{langer-singer_classification,langersinger_minimax,singer_lecturenotes, miura_phase_transitions}) and applied (e.g.\ \cites{CANHAM197061_bloodcell,Helfrich+1973+693+703}) perspectives, see also the surveys \cites{Levien:EECS-2008-103, elastica_influence, elastica_survey} and the references therein. 
Classically, for an immersed curve $\gamma$, the (squared) bending energy, i.e.\ the integral of the squared curvature is considered. 
Recently, a generalization, the so-called $p$-\emph{bending energy} has attracted much attention in the mathematical community \cites{acerbi, ambrosio_image, ARROYO2003339, p-elastica_in_sphere, kawohl1, masnou_levellines,p-obstacle, miuraclassification, Okabe_p_flow, Okabe_p_flow2, Pozzetta1, p_network_flow, Japan_p}.

Let $p\in(1,\infty)$ and $\gamma \in W^{2,p}(a,b;\R^n)$ be an immersed curve.
Then its $p$-bending energy is defined as
\begin{equation}
    \B_p[\gamma] := \int_\gamma |\kappa|^p ds,
\end{equation}
where $\kappa$ denotes the curvature vector of $\gamma$.
Since the $p$-bending energy is invariant under reparameterization and Euclidean isometries, we may assume $\gamma$ to be arclength parameterized, unless specified otherwise, and hence $\kappa(s)=\gamma''(s)$. 

Reminiscent of Euler's classical elastica problem, we additionally restrict the curve $\gamma$ to have fixed length $\mathcal{L}[\gamma] = \int_\gamma ds=L$. 
This constraint is present in form of a Lagrange multiplier $\lambda \in \R$.

\begin{definition}[$p$-elastica] 
\label{def: p elastica}
Let $p\in (1,\infty)$ and $\gamma \in W^{2,p}(0,L;\R^n)$ be an 
arclength parameterized curve.
Then $\gamma$ is called a \emph{$p$-elastica} if there exists $\lambda \in \R$ such that $\gamma$ is a critical point of the map $\gamma \mapsto \B_p[\gamma] + \lambda \Ll[\gamma]$, i.e.\ 
\begin{equation}
\label{eq: first variation frechet}
  \forall \eta \in C^\infty_c(0,L;\R^n): \qquad  \frac{d}{d\eps}\bigg|_{\eps=0} \left( \B_p[\gamma + \eps \eta] + \lambda \Ll[\gamma+\eps \eta] \right)  = 0.
\end{equation}
\end{definition}

We note that Definition~\ref{def: p elastica} is equivalent to the usual definition in terms of immersed curves, cf.\ \cite[Appendix~ A]{miuraclassification}.
We say that a $p$-elastica $\gamma$ is \emph{$d$-dimensional} if its image $\gamma([0,L])$ is contained in a $d$-dimensional (but not a $(d-1)$-dimensional) affine subspace of $\R^n$.
We also say \emph{planar} (resp.\ \emph{spatial}) in the same sense as two dimensional (resp.\ three dimensional).

The main aim of our work is to enhance the mathematical understanding of the $p$-bending energy as well as the regularity and structural properties of $p$-elasticae. 
In addition to the classical case $p=2$ (elastica), recent work \cites{infinite_elastica_classification, infinity_elastica_manifold, kaya2024curvesminimaxcurvature} analyzes an $L^\infty$-bending energy, which is closely connected to other geometric optimization problems such as the Markov--Dubins problem, see \cite{infinite_elastica_classification} and the references therein.
We note that $\infty$-elasticae in \cite{infinite_elastica_classification} are obtained by approximating the $L^\infty$-norm by the $L^p$-norm, passing to the limit as $p\to \infty$ in the corresponding Euler--Lagrange equations and then studying the solutions of the limiting ODE system. 
The $p$-bending energy (at least for $p>2$) can thus be seen as an intermediate setting between the $L^2$ and the $L^\infty$ case.
One of our main results, Theorem~\ref{thm:p>2 case}, exhibits a strong structural resemblance to the classification result \cite[Theorem~4]{infinite_elastica_classification} for $\infty$-elasticae; with this paper we also hope to open up new insights into the relationship between $p$-elasticae and $\infty$-elasticae.
Additionally, $p$-elasticae can be interpreted as stationary solutions of the associated $p$-elastic gradient flows, higher order parabolic evolution equations, see Section~\ref{section: p-elastic flows}.

For the case $p=2$, every elastica is analytic and (at most) three dimensional; also, classification, stability and explicit parameterizations have been known since the 80's, thanks to the works of Langer and Singer \cites{langer-singer_classification,singer_lecturenotes,langersinger_minimax}, see also \cite{elastica_survey}. 
The case $p\neq2$ is more delicate; we need to differentiate between the singular ($1<p<2$) and degenerate ($p>2$) setting. 
This terminology originates from the leading order term in the corresponding Euler--Lagrange equation \eqref{eq:EL for k and tau}. 
In the planar case, classification and explicit parameterizations (involving newly defined $p$-elliptic functions) have been recently obtained in \cite{miuraclassification}.
In addition, optimal regularity is determined depending on $p\in(1,\infty)$ \cite[Theorem 1.8 and Theorem 1.9]{miuraclassification}, which, in particular, implies that the regularity of planar $p$-elasticae is lost for all but countably many $p$'s.

\subsection{Main results}

As our main results, we obtain optimal regularity and structural properties for $p$-elasticae with $p \in (1,\infty)$ in $\R^n$. 
First, in the singular case ($1<p<2$), the regularity remains consistent with the case $p=2$.
Notably, and in stark contrast to the planar case, the loss of regularity is entirely absent.

\begin{theorem}
\label{thm:p<=2 case}
    Let $p\in(1,2]$ and $n\geq3$.
    Then any non-planar $p$-elastica in $\R^n$ is analytic and three dimensional.
\end{theorem}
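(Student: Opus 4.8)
The plan is to reduce the statement to an analysis of the curvature $k=|\kappa|$ and torsion $\tau$ via the Euler--Lagrange system (referred to as \eqref{eq:EL for k and tau} in the paper), and to exploit the key difference between the singular range $1<p<2$ and the degenerate range $p>2$: when $p<2$ the leading coefficient in the equation for $k$ degenerates only where $k=0$, but the non-planarity of $\gamma$ should force $k$ to stay positive, so no loss of regularity can occur. Concretely, I would first recall that for a Frenet curve the Euler--Lagrange equations of the $p$-bending energy split into a scalar equation for $k$ (of the form involving $\partial_s(k^{p-1})$, lower-order terms in $k$, the torsion $\tau$, and $\lambda$) coupled with a conservation law for $\tau$ along the curve. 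The conservation law typically reads $k^{?}\tau = \text{const}$ (the analogue of the classical $k^2\tau=c$ for $p=2$, here presumably $k^{2(p-1)}\tau = c$ or similar), and this is the structural backbone of the argument.

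\medskip

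\noindent\textbf{Step 1: non-planarity forces $k>0$ and $\tau\not\equiv0$.} I would argue that a non-planar $p$-elastica cannot have $k$ vanishing on a nontrivial set. Heuristically, at points where $k=0$ the Frenet frame degenerates and the curve locally behaves like a straight segment; if $k$ vanished on an interval the torsion would be undefined there and, by the conservation law together with continuity, the curve would be forced to remain planar (this is exactly the flat-core phenomenon excluded by the hypothesis $p\le 2$). Thus I expect to show: either $\gamma$ is planar, or $k(s)>0$ for all $s$ and the constant $c$ in the torsion conservation law is nonzero. This is where the restriction $1<p\le2$ enters crucially, since for $p>2$ one genuinely gets flat-core solutions with $k$ vanishing on intervals, which is why Theorem~\ref{thm:p>2 case} is different.

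\medskip

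\noindent\textbf{Step 2: positivity removes the degeneracy and yields analyticity of $(k,\tau)$.} Once $k$ is bounded away from zero on compact subintervals, the map $k\mapsto k^{p-1}$ is smooth (indeed analytic) on the range of $k$, so the quasilinear term $\partial_s(k^{p-1})=(p-1)k^{p-2}\partial_s k$ has a non-vanishing, analytic coefficient $(p-1)k^{p-2}$. The Euler--Lagrange system for $(k,\tau)$ therefore becomes a non-degenerate analytic ODE system (after substituting the conservation law $\tau=c\,k^{-2(p-1)}$ to eliminate $\tau$, leaving a single second-order analytic ODE for $k$). Standard analytic ODE theory (Cauchy--Kovalevskaya / the analytic version of Picard--Lindel\"of) then gives that $k$, and hence $\tau$, is real-analytic in $s$.

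\medskip

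\noindent\textbf{Step 3: analytic Frenet frame integrates to an analytic curve, and dimension is three.} With $k>0$ and $\tau$ analytic, the Frenet--Serret ODE is a linear system with analytic coefficients, so the frame $(T,N,B)$ is analytic, and integrating $T$ gives that $\gamma$ itself is analytic. Finally, to see that $\gamma$ is exactly three-dimensional, I would invoke the standard fact that a Frenet curve with well-defined first and second curvatures ($k>0$, $\tau$) but vanishing higher curvatures lies in a three-dimensional affine subspace; since the $p$-elastica equations only involve $k$ and $\tau$ and produce no higher-order Frenet invariants, all higher curvatures vanish, pinning $\gamma$ into an affine $3$-space. Non-planarity then forces the dimension to be exactly three.

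\medskip

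\noindent The main obstacle I anticipate is \textbf{Step 1}: rigorously ruling out that $k$ vanishes, given that $\gamma$ is only a priori $W^{2,p}$ and the Frenet frame may be ill-defined at zeros of $k$. The delicate point is to upgrade the weak (distributional) formulation \eqref{eq: first variation frechet} to the pointwise Euler--Lagrange system and to the torsion conservation law without assuming $k>0$ from the start, and then to deduce from $1<p\le2$ that zeros of $k$ are incompatible with non-planarity. I expect this requires a careful bootstrap: first establish enough regularity of $k$ from the weak equation to make sense of $\tau$, then use the conservation law $k^{2(p-1)}\tau=c$ to show that $c\neq0$ is inconsistent with $k$ reaching zero (since $\tau$ would blow up), while $c=0$ forces $\tau\equiv0$ and hence planarity.
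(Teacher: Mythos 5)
Your overall strategy matches the paper's: derive the conservation law $k^{2p-2}\tau=C$ on intervals where $k>0$, split into the cases $C\neq0$ and $C=0$, use a blow-up/first-integral argument to show $C\neq0$ forces $k>0$ everywhere, and then get analyticity from the now non-degenerate ODE and three-dimensionality from an ODE uniqueness (or Frenet) argument. Steps 2 and 3 are essentially what the paper does (the paper proves three-dimensionality via Picard--Lindel\"of applied to the fourth-order ODE for $\gamma$ on a positivity interval, which is cleaner than your appeal to ``no higher-order Frenet invariants,'' but both are workable). Also, your blow-up heuristic ``$\tau$ would blow up'' needs the extra input that $w'=(k^{p-1})'$ is a priori bounded (from $W=|\gamma''|^{p-2}\gamma''\in W^{2,\infty}$), so that the first integral $w'^2+\dots+C^2w^{-2}=A$ yields a genuine contradiction as $w\to0$; blow-up of $\tau$ alone contradicts nothing.

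The genuine gap is in Step 1, and it is exactly the hardest part of the proof. First, the claim ``non-planarity forces $k>0$'' is false as stated: planar wavelike $p$-elasticae have isolated zeros of $k$, so zeros of $k$ are perfectly compatible with being a $p$-elastica; what is true is that non-planarity forces $C\neq0$, which then forces $k>0$. More importantly, when $C=0$ the conservation law only gives $\tau\equiv0$ \emph{on each positivity interval separately}, so each arc between consecutive zeros of $k$ is planar --- but a priori the supporting planes of adjacent arcs could differ, producing a non-planar curve that is piecewise planar and has no flat-core structure. Your argument never rules this out. The paper handles it in two sub-cases: at an interior zero $s_0$ with $w'(s_0^\pm)\neq0$, a delicate construction of test functions supported near the joint (Proposition~\ref{prop: planar at the joint}) extracts the natural boundary condition $\langle N_1(s_0),N_2(s_0)\rangle=-1$, forcing the two planes to coincide; at a zero with $w'(s_0)=0$, the restriction $p\le2$ enters via Picard--Lindel\"of uniqueness for the (then non-degenerate) Cauchy problem $u''+\tfrac{p-1}{p}u^{\frac{p+1}{p-1}}-\tfrac{\lambda}{p}u^{\frac{1}{p-1}}=0$, $u(s_0)=u'(s_0)=0$, which forces $w\equiv0$ and hence a contradiction. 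Without an argument that the planes match across joints, the case $C=0$ of your proof does not close.
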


In the degenerate case ($p>2$), the situation changes and so-called \emph{flat-core solutions} (first introduced in~\cite{watanabe_flatcore}, see Figure~\ref{fig: flat-core example1} and Definition~\ref{def:flatcore}) emerge. 
These special solutions are not restricted to a three dimensional subspace, but are potentially ``proper'' $n$-dimensional curves.
We have the following dichotomy:

\begin{figure}[htbp]
    \centering
    \includegraphics[width=0.8\linewidth, trim = 300 240 300 200, clip]{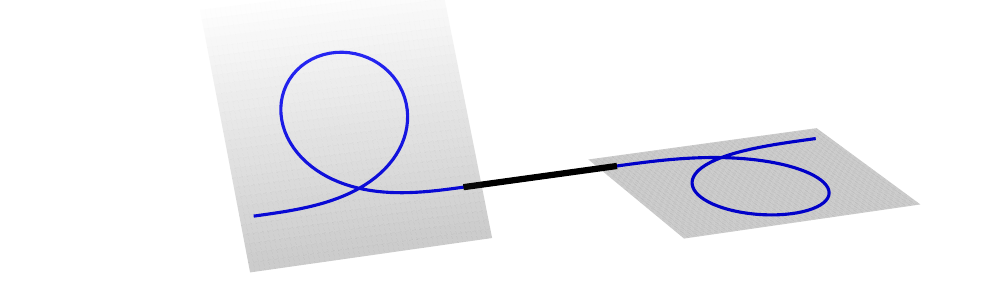}
    \caption{A three dimensional flat-core $p$-elastica ($p=4$): two loops connected by a straight line part.}
    \label{fig: flat-core example1}
\end{figure}

\begin{theorem}
\label{thm:p>2 case}
    Let $p\in(2,\infty)$ and $n\geq3$.
    Then any non-planar $p$-elastica in $\R^n$ is either
    \begin{enumerate}
        \item an analytic three dimensional $p$-elastica, or 
        \item a $d$-dimensional flat-core $p$-elastica for some $d\in\{3,\dots,n\}$. For $M_p:=\lceil \tfrac{2}{p-2} \rceil$, it is of class $C^{M_p+1}$ but not of class $C^{M_p+2}$. 
    \end{enumerate}
\end{theorem}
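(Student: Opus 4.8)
The plan is to work on the open set $U := \{s : \kappa(s) \neq 0\}$, where the (generalized) Frenet frame $e_1,\dots,e_n$ is well defined, and to reduce the vector Euler--Lagrange equation obtained from \eqref{eq: first variation frechet} to the scalar system \eqref{eq:EL for k and tau} for the curvature $k := |\kappa|$ and the torsion $\tau$. Decomposing the first variation in Frenet components, I expect two structural facts on each connected component of $U$: first, the torsion obeys a conservation law of the form $k^{p}\tau \equiv c$ for a constant $c$; and second, testing the equation against the higher frame vectors $e_3,\dots,e_{n-1}$ forces the generalized curvatures $\kappa_3,\dots,\kappa_{n-1}$ to vanish, so that each component is three dimensional. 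The whole dichotomy is then organized around the constant $c$.

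If $c \neq 0$, then $k^{p}\tau \equiv c$ precludes any zero of $k$, so $k>0$ on all of $(0,L)$ and the Frenet frame is globally smooth. On this set the system \eqref{eq:EL for k and tau} is a non-degenerate ODE system with analytic nonlinearity, so $k$ and $\tau$ are analytic by standard ODE regularity and analytic continuation; integrating the Frenet equations shows $\gamma$ is analytic, and by the vanishing of the higher generalized curvatures it is three dimensional. This yields alternative (i).

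If $c=0$, then $\tau \equiv 0$ wherever $k>0$, so every curved arc is a planar $p$-elastica. Since $\gamma$ is assumed non-planar, these arcs cannot all lie in a common plane, and the key rigidity step is to show that the \emph{only} mechanism permitting a change of osculating plane is a genuine straight segment, i.e.\ an interval on which $k\equiv 0$. Here the degeneracy $p>2$ is essential: across an isolated zero of $k$, unique continuation would keep the curve in a single plane, whereas an entire flat interval acts as a hinge that lets consecutive loops lie in distinct planes. Ruling out the isolated-zero scenario and showing that the zero set must contain intervals is the main obstacle; once established, $\gamma$ is a flat-core solution in the sense of Definition~\ref{def:flatcore}, necessarily $d$-dimensional with $3 \le d \le n$ (the lower bound from non-planarity, the upper bound from the ambient space), giving alternative (ii).

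It remains to pin down the regularity of the flat-core solutions. Away from the junctions the curve is either straight or analytic, so the regularity is dictated by the local behavior of $k$ at a point $s_0$ where a flat segment meets a curved arc. Inserting the ansatz $k(s)\sim c_0|s-s_0|^{\alpha}$ into \eqref{eq:EL for k and tau} and balancing the degenerate leading term $\frac{d^2}{ds^2}(|k|^{p-2}k)$, of order $\alpha(p-1)-2$, against the multiplier term $\lambda k$, of order $\alpha$, forces $\alpha(p-2)=2$, i.e.\ $\alpha = \tfrac{2}{p-2}$. Since $\kappa=\gamma''$ points in an essentially fixed normal direction near $s_0$, a one-sided power $|s-s_0|^{2/(p-2)}$ lies in $C^{M_p-1}\setminus C^{M_p}$ for $M_p=\lceil \tfrac{2}{p-2}\rceil$, whence $\gamma\in C^{M_p+1}\setminus C^{M_p+2}$; this matches and can be deduced from the planar optimal regularity in \cite{miuraclassification}. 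As these junctions realize the worst regularity, the global statement in (ii) follows.
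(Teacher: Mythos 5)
Your outline reproduces the paper's high-level dichotomy (organize everything around the torsion conservation constant; $c\neq0$ gives the analytic spatial case, $c=0$ gives piecewise planar arcs), but the central difficulty is left unresolved, and the mechanism you propose for resolving it is not the right one. First, a small correction: the conservation law is $k^{2p-2}\tau=C$ (see \eqref{eq: EL for tau}), not $k^{p}\tau=c$, and the non-vanishing of $k$ when $C\neq0$ is extracted from the first integral \eqref{eq:EL for w first order}, whose term $C^2w^{-2}$ blows up as $w\to0$ while the other terms stay bounded. More seriously, in the case $C=0$ you frame the rigidity step as ``isolated zero of $k$ $\Rightarrow$ the osculating plane cannot change; only a flat \emph{interval} can act as a hinge,'' and you defer the proof to ``unique continuation.'' This is false as stated: Definition~\ref{def:flatcore} allows the straight segments to have length $L_j=0$, so a flat-core $p$-elastica can change its osculating plane at a single isolated zero of $k$ (two loops with $\theta_1\nparallel\theta_2$ joined directly) --- indeed this is exactly the configuration that determines the sharp $C^{M_p+1}$ regularity in part~(ii). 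The correct dichotomy at a zero $s_0$ of $w=k^{p-1}$ is governed by the one-sided derivatives $w'(s_0^{\pm})$: if they are nonzero the curve must connect in a single plane and is globally a planar wavelike $p$-elastica (Proposition~\ref{prop: planar at the joint}), while if $w(s_0)=w'(s_0)=0$ the flat-core alternative occurs (Proposition~\ref{prop:flat-core is solution}). No off-the-shelf unique continuation applies because the strong Euler--Lagrange equation degenerates and in general fails to hold across the joint; the paper has to return to the global weak formulation \eqref{eq:first variation} and construct test functions $\eta_i$ built from the rotated tangents $Q_iT$, supported on shrinking neighborhoods of the joint, to extract the matching conditions $-w_1'(0)+w_2'(0)\langle N_2(0),Q_1T(0)\rangle=0$ and its counterpart, which force $N_1(0)=-N_2(0)$ and hence a common plane. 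This construction is the heart of the theorem and is entirely absent from your proposal.

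Two further points. Your argument for three-dimensionality (testing against higher Frenet vectors $e_3,\dots,e_{n-1}$) presupposes a nondegenerate Frenet frame of order $n$, which is not available a priori; the paper instead rewrites \eqref{eq:first variation} as a fourth-order ODE $\gamma''''=G(\gamma',\gamma'',\gamma''')$ on positivity intervals and invokes Picard--Lindel\"of uniqueness against a solution constructed inside the three-dimensional span of the interior data (Proposition~\ref{prop:dimension}); that route avoids the frame issue entirely. Finally, your scaling ansatz $k\sim c_0|s-s_0|^{2/(p-2)}$ does recover the correct exponent and the count $C^{M_p+1}\setminus C^{M_p+2}$, but to make it a proof one needs the explicit $\sech_p$ asymptotics from \cite{miuraclassification} and, for the genuinely non-planar case, the observation that at a direct junction with $\theta_1\nparallel\theta_2$ the one-sided limits of the normal $N$ are not parallel, so the jump of $k^{(M_p)}$ cannot cancel in $\gamma^{(M_p+2)}=(kN)^{(M_p)}$.
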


The main difficulty in the proof lies in analyzing the $p$-elastica $\gamma$ at points of vanishing curvature (joints). 
We first prove that in this case $\gamma$ is partially planar. However, regularity across the joints is lost and in general the strong Euler--Lagrange equation may not hold globally.
We overcome this issue by constructing suitable test functions for the global weak formulation to extract sufficient geometric conditions at the joints.

Finally, we obtain a nonlinear ODE for the curvature $k$ and torsion $\tau$ in the generic, non-planar and analytic case.

\begin{theorem}
\label{thm:EL for k and tau}
    Let $p\in (1,\infty)$ and $\gamma:[0,L]\to\R^n$ be an analytic non-planar $p$-elastica.
    Then $\gamma$ is a spatial curve with $k,|\tau|>0$ on $[0,L]$, satisfying 
    \begin{equation}
    \label{eq:EL for k and tau}
    \begin{split}
        p (k^{p-1})'' + (p-1) k^{p+1} -p k^{p-1} \tau^2 -\lambda k &= 0, \\
         k^{2p-2} \tau &= C,
    \end{split}
    \end{equation}
for some real constant $C\neq 0$.
\end{theorem}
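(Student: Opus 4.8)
The plan is to upgrade the weak condition \eqref{eq: first variation frechet} to a pointwise vector equation and then read off \eqref{eq:EL for k and tau} by projecting onto the Frenet frame. Since $\gamma$ is analytic and non-planar, $k=|\gamma''|$ is a nonzero analytic function, so $\{k=0\}$ is discrete and $U:=\{k>0\}$ is a dense union of open intervals on which the Frenet frame $e_1=T,e_2=N,e_3,\dots$ and the generalized curvatures $k_1=k,k_2=\tau,k_3,\dots$ are analytic.

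First I would compute the first variation under a normal test field $V=\eta^{\perp}$. Using the standard formulas $\delta(ds)=-\langle\kappa,V\rangle\,ds$, $\delta T=\nabla_s V$ and $\delta\kappa=\langle\kappa,V\rangle\kappa+\nabla_s^2V-\langle\nabla_sV,\kappa\rangle T$, where $\nabla_s$ denotes the normal connection $(\ds\,\cdot)^{\perp}$, a direct calculation followed by integrating by parts twice (boundary terms vanish as $\eta$ is compactly supported and all quantities are analytic on $U$) gives
\[
\frac{d}{d\eps}\Big|_{\eps=0}\big(\B_p[\gamma+\eps\eta]+\lambda\Ll[\gamma+\eps\eta]\big)=\int\big\langle\nabla_s^2\big(p|\kappa|^{p-2}\kappa\big)+(p-1)|\kappa|^p\kappa-\lambda\kappa,\,V\big\rangle\,ds.
\]
Tangential test fields $\eta=\phi T$ contribute nothing, by the reparameterization invariance of $\B_p$ and $\Ll$, so \eqref{eq: first variation frechet} is equivalent to the vanishing of the normal field $\mathbf{W}:=\nabla_s^2(p|\kappa|^{p-2}\kappa)+(p-1)|\kappa|^p\kappa-\lambda\kappa$ on $U$.

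Next I would project $\mathbf{W}=0$ onto the Frenet frame. With $\kappa=kN$ one has $p|\kappa|^{p-2}\kappa=pk^{p-1}N$, and setting $f:=pk^{p-1}$ and using $\nabla_sN=k_2e_3$ and $\nabla_se_3=-k_2N+k_3e_4$ gives
\[
\mathbf{W}=\big(f''-fk_2^2+(p-1)k^{p+1}-\lambda k\big)N+(2f'k_2+fk_2')e_3+fk_2k_3\,e_4.
\]
The $N$-component is precisely the first equation of \eqref{eq:EL for k and tau}; the $e_3$-component $2f'k_2+fk_2'=0$ is exactly $\tfrac{d}{ds}(k^{2p-2}\tau)=0$, so $k^{2p-2}\tau=C$ is constant on each component of $U$; and the $e_4$-component forces $fk_2k_3=0$. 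Since $f>0$ on $U$ and non-planarity gives $k_2=\tau\not\equiv0$, we obtain $k_3\equiv0$, so $\gamma$ is at most three dimensional, hence exactly spatial.

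It remains to show $k,|\tau|>0$ everywhere and $C\neq0$. Non-planarity yields $\tau\not\equiv0$ and hence $C\neq0$ on some interval of $U$; there $|\tau|=|C|k^{2-2p}$. The delicate point, which I expect to be the main obstacle, is excluding zeros of $k$: if $k(s_0)=0$ with $C\neq0$ on an adjacent interval, then $|\tau|=|C|k^{2-2p}\to\infty$ as $s\to s_0$, contradicting the boundedness of $\tau=\det(\gamma',\gamma'',\gamma''')/k^2$ near a zero of $\gamma''$ for the analytic curve $\gamma$ (numerator and $k^2$ vanish to matching order). Thus $U=[0,L]$, so $k>0$ throughout, $C$ is a single nonzero constant, $|\tau|=|C|k^{2-2p}>0$, and \eqref{eq:EL for k and tau} holds on all of $[0,L]$. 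The weak-to-strong passage is also subtle precisely because the variation uses $\kappa\neq0$ (as $|\kappa|^{p-2}$ is singular when $p<2$), so $\mathbf{W}=0$ is immediate only on $U$ and is propagated to the whole interval by this joint analysis.
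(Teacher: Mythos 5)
Your proof is correct, but it takes a genuinely different route from the paper. The paper derives \eqref{eq:EL for k and tau} on a positivity interval via the specific normal/binormal test functions of Proposition~\ref{prop: ODE}, and then disposes of the two cases $C\neq0$ and $C=0$ by invoking the full classification machinery: Proposition~\ref{prop:C>0} (whose endpoint blow-up argument, Lemma~\ref{lemma: blow-up at endpoint}, uses the first integral \eqref{eq:EL for w first order} rather than geometric boundedness of $\tau$), and, for $C=0$, the joint analysis of Proposition~\ref{prop: planar at the joint} together with the flat-core classification and the non-analyticity of flat-cores from Theorem~\ref{thm:p>2 case}. You instead exploit the analyticity hypothesis directly: the invariant normal Euler--Lagrange equation plus Frenet projection yields all three scalar equations at once (including $k_3\equiv0$, hence spatiality, which the paper gets separately from Proposition~\ref{prop:dimension}), and curvature zeros are excluded because $\tau=Ck^{2-2p}$ would blow up while the torsion of an analytic space curve stays bounded near an isolated zero of $\gamma''$ (your matching-vanishing-order claim is correct: if $\gamma''$ vanishes to order $m$, both $\det(\gamma',\gamma'',\gamma''')$ and $k^2$ vanish to order at least/exactly $2m$). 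Your approach buys a self-contained proof of this theorem that bypasses Sections~\ref{section:preliminaries}--\ref{section:main} almost entirely; the paper's approach buys uniformity with the non-analytic classification, where your shortcuts (unique continuation of planarity, boundedness of $\tau$) are unavailable. Two points you should make explicit: (a) the step ``non-planarity yields $\tau\not\equiv0$, hence $C\neq0$ on some component'' uses that planarity of $\gamma$ on a single subinterval propagates to all of $[0,L]$ by analyticity (otherwise different components could a priori lie in different planes); and (b) the conclusion $U=[0,L]$ rests on the observation that any component of $\{k>0\}$ other than the whole interval has an endpoint where $k$ vanishes, so the component carrying $C\neq0$ must be everything --- this is exactly the paper's reasoning in Proposition~\ref{prop:C>0}, and is worth a sentence.
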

In the classical $p=2$ case, \cite{langer-singer_classification}, the curvature and torsion are explicitly expressed using classical Jacobi elliptic functions as a result of the polynomial structure of \eqref{eq:EL for k and tau}.
However, for the general case $p \in (1,\infty)$, explicit solutions for $k$ and $\tau$ and their stability properties remain unknown.

\subsection{Pinned boundary value problem}

Recent work \cite{miura_pinned_p} classifies pinned (zeroth-order boundary condition) planar $p$-elasticae. 
Similarly, for $p\in (1,\infty)$, $L>0$ and $P_0,P_1 \in \R^n$ with $|P_0-P_1|<L$, let $\A_{P_0,P_1,L}$ be the set of admissible curves in $\R^n$,
\begin{equation*}
    \A_{P_0,P_1,L}:=\{\gamma \in W^{2,p}(0,L;\R^n): |\gamma'|\equiv 1, \ \gamma(0)=P_0, \ \gamma(L)=P_1\}.
\end{equation*}
A pinned $p$-elastica is defined as a critical point of $\B_p$ within this admissible set.
Thanks to the multiplier method and reparameterization invariance (see \cite{miura_pinned_p}), the problem can equivalently be reformulated as follows:

\begin{definition}
\label{def: pinned p elastica}
    Given $p\in (1,\infty)$, we say that $\gamma \in \A_{P_0,P_1,L}$ is a \emph{pinned $p$-elastica} if there exists $\lambda \in \R$ such that 
    \begin{equation*}
        \forall \eta \in C_0^\infty(0,L;\R^n): \frac{d}{d\eps} \bigg|_{\eps=0} (\B_p[\gamma + \eps \eta] + \lambda \Ll[\gamma + \eps \eta]) =0.
    \end{equation*}
\end{definition}

We extend the classification in \cite{miura_pinned_p} from $\R^2$ to $\R^n$ by employing our regularity results to derive natural boundary conditions and essentially reduce the problem to the planar case.

\begin{theorem}
\label{thm: pinned classification}
Let $p\in (1,\infty)$, $L>0$ and $P_0,P_1 \in \R^n$ with $|P_0-P_1|<L$, and $\gamma \in \A_{P_0,P_1,L}$ be a pinned $p$-elastica. Then $k(0)=k(L)=0$, and we have the following classification:
\begin{enumerate}
    \item If  $p \leq 2$, or if $p > 2$ and $|P_0-P_1| < \frac{L}{p-1}$, then $\gamma$ is planar.
    \item If otherwise, then $\gamma$ is either planar, or a non-planar flat-core $p$-elastica whose aligned representative $\hat{\gamma}$ (see Definition~\ref{def: flatcore representation}) is a planar pinned $p$-elastica.
\end{enumerate}
\end{theorem}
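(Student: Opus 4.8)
The plan is to first extract the natural boundary conditions from the pinned variational structure, then feed the interior regularity and structure theorems (Theorems~\ref{thm:p<=2 case}, \ref{thm:p>2 case}, \ref{thm:EL for k and tau}) into this information, and finally reduce any surviving non-planar solution to the planar classification of \cite{miura_pinned_p}. For the \emph{natural boundary conditions}, since $\gamma$ is critical within $\A_{P_0,P_1,L}$, I would test the first variation of $\B_p+\lambda\Ll$ against variations $\eta$ preserving the endpoint constraints, i.e.\ $\eta(0)=\eta(L)=0$ with $\eta'(0),\eta'(L)$ free (this is the admissible class once the length constraint is absorbed into $\lambda$ and the arclength constraint into reparameterization, exactly as in \cite{miura_pinned_p}). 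Testing first against $\eta\in C^\infty_c(0,L;\R^n)$ reproduces the interior Euler--Lagrange equation, so $\gamma$ is in particular a $p$-elastica in the sense of Definition~\ref{def: p elastica} with the same $\lambda$. Integrating the first variation by parts then leaves only the boundary term proportional to $|\kappa|^{p-2}\kappa\cdot\eta'$ evaluated at $s\in\{0,L\}$; the regularity afforded by Theorems~\ref{thm:p<=2 case} and \ref{thm:p>2 case} guarantees that $\kappa$ is continuous up to the endpoints, so the arbitrariness of $\eta'(0),\eta'(L)$ forces $|\kappa|^{p-2}\kappa=0$, hence $k(0)=k(L)=0$. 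This computation is componentwise and therefore identical to the planar one in \cite{miura_pinned_p}.

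Next I would \emph{rule out non-planar analytic solutions}. Because $\gamma$ is a $p$-elastica, Theorems~\ref{thm:p<=2 case} and \ref{thm:p>2 case} apply: if $\gamma$ is non-planar then it is either an analytic three dimensional $p$-elastica or (only when $p>2$) a flat-core $p$-elastica. In the analytic non-planar case, Theorem~\ref{thm:EL for k and tau} yields $k>0$ on the whole interval $[0,L]$, which directly contradicts the boundary condition $k(0)=0$ from the previous step. Hence no non-planar analytic pinned $p$-elastica exists. In particular, for $p\le 2$ there are no flat-core solutions at all, so every non-planar possibility is excluded and $\gamma$ must be planar; this already proves the first half of case~(i).

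It then remains to \emph{reduce flat-core solutions to the planar case}, which forces $p>2$. Here I would pass to the aligned representative $\hat\gamma$ of Definition~\ref{def: flatcore representation}, obtained by rotating each loop about the common axis determined by the adjacent straight (flat) segments into a fixed plane. Two points must be verified. First, $\hat\gamma$ has the same boundary data: by the flat-core structure the tangent agrees at the two ends of each loop, so all straight segments are parallel to a common direction $v$, and the reflective symmetry of each flat-core loop forces its net displacement $\int_{\mathrm{loop}}\gamma'\,ds$ to be parallel to $v$; as alignment rotates each loop about $v$, it leaves every such displacement, and hence the total chord $P_1-P_0=\int_0^L\gamma'\,ds$ and the length $L$, unchanged. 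Second, $\hat\gamma$ is again a planar pinned $p$-elastica: each rotated loop is still a planar $p$-elastica arc and each flat segment is straight, while at the joints (where $k=0$) the weak Euler--Lagrange and joint conditions established in the proof of Theorem~\ref{thm:p>2 case} are invariant under rotation about $v$ and are therefore preserved. With $\hat\gamma$ identified as a planar pinned $p$-elastica with boundary data $(|P_0-P_1|,L)$, the planar classification of \cite{miura_pinned_p} applies and shows that a flat-core (equivalently, the presence of straight segments) can occur only when $|P_0-P_1|\ge \tfrac{L}{p-1}$.

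Combining the steps gives the stated dichotomy: if $p\le 2$, or if $p>2$ and $|P_0-P_1|<\tfrac{L}{p-1}$, then both non-planar alternatives are impossible and $\gamma$ is planar (case~(i)); otherwise $\gamma$ is either planar or a non-planar flat-core whose aligned representative is a planar pinned $p$-elastica (case~(ii)). I expect the main obstacle to be the reduction step, specifically verifying that the alignment procedure preserves the weak formulation---that the joint conditions at the zeros of curvature are genuinely invariant under rotation about $v$ and that $\hat\gamma$ satisfies Definition~\ref{def: pinned p elastica}---together with the symmetry argument pinning the per-loop displacement to the axis $v$, on which the invariance of the chord $|P_0-P_1|$ depends. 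Establishing the natural boundary conditions at the possibly low boundary regularity of flat-core solutions is a secondary technical point, handled by the continuity of $\kappa$ up to the endpoints from Theorems~\ref{thm:p<=2 case}--\ref{thm:p>2 case}.
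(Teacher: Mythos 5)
Your proposal is correct and follows essentially the same route as the paper: derive the natural boundary condition $k(0)=k(L)=0$ by integration by parts against variations with free endpoint derivatives, rule out the analytic non-planar case via Theorem~\ref{thm:EL for k and tau} (which forces $k>0$ everywhere), and reduce the remaining flat-core case to the planar classification of \cite{miura_pinned_p} through the aligned representative. The only cosmetic difference is that the paper obtains $k(0)=0$ by a contradiction argument localized to a small interval $[0,\delta]$ where $k>0$ (hence $\gamma$ is smooth), whereas you integrate by parts globally using $W\in W^{2,\infty}$; both are valid, and your extra detail on why alignment preserves the chord and the pinned criticality fills in a step the paper leaves implicit.
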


Thanks to the essentially planar nature, we obtain the following corollary.
\begin{corollary}
\label{cor: unique pinned minimizer}
    Let $p\in (1,\infty)$, $L>0$ and $P_0,P_1 \in \R^n$ with $|P_0-P_1|<L$. Then there exists a unique (up to isometry) global minimizer $\gamma$ of $\B_p$ in $\A_{P_0,P_1,L}$, which is planar (and hence a convex arc given in \cite[Theorem~1.4]{miura_pinned_p}).
\end{corollary}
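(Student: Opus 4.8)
The plan is to produce the minimizer by the direct method, then invoke the classification in Theorem~\ref{thm: pinned classification} together with the planar result \cite[Theorem~1.4]{miura_pinned_p} to force it to be planar, from which uniqueness is inherited from the planar case.

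For existence I would take a minimizing sequence $(\gamma_j)\subset\A_{P_0,P_1,L}$. The unit-speed constraint $|\gamma_j'|\equiv1$ together with $\gamma_j(0)=P_0$ bounds $(\gamma_j)$ in $W^{1,\infty}$, while the energy bound bounds $(\gamma_j'')$ in $L^p$, so $(\gamma_j)$ is bounded in $W^{2,p}(0,L;\R^n)$. Since $1<p<\infty$ this space is reflexive, so after passing to a subsequence $\gamma_j\rightharpoonup\gamma$ in $W^{2,p}$, and by the compact embedding $W^{2,p}\hookrightarrow C^1$ we get $\gamma_j\to\gamma$ in $C^1$. The latter preserves $|\gamma'|\equiv1$, $\gamma(0)=P_0$, $\gamma(L)=P_1$, so $\gamma\in\A_{P_0,P_1,L}$; and since $\xi\mapsto|\xi|^p$ is convex, $\B_p$ is weakly lower semicontinuous along $\gamma_j''\rightharpoonup\gamma''$, giving $\B_p[\gamma]\le\liminf_j\B_p[\gamma_j]$. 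Thus $\gamma$ minimizes $\B_p$ in $\A_{P_0,P_1,L}$ and, via the multiplier method, is in particular a pinned $p$-elastica.

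To exclude non-planar minimizers I would argue by contradiction: suppose $\gamma$ is a non-planar flat-core. By Theorem~\ref{thm: pinned classification} its aligned representative $\hat\gamma$ is a planar pinned $p$-elastica. The aligning rotates the curved loops of $\gamma$ about the tangent directions of the adjoining straight segments into a single plane; each such rotation is an isometry fixing the relevant joint and preserves $|\kappa|$ piecewise, so $\hat\gamma$ has the same length $L$ and the same energy $\B_p[\hat\gamma]=\B_p[\gamma]$. The crucial point is that $\hat\gamma$ keeps the endpoints $P_0,P_1$: the straight segments of a flat-core are mutually parallel and each loop returns to the common axis with a net displacement along it, so rotating the loops into one plane leaves $\gamma(L)$ fixed. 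Hence $\hat\gamma\in\A_{P_0,P_1,L}$ is a planar competitor of minimal energy.

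Consequently $\B_p[\hat\gamma]=\B_p[\gamma]=\min_{\A_{P_0,P_1,L}}\B_p\le m_{\mathrm{pl}}$ (planar curves being admissible), while being planar forces $\B_p[\hat\gamma]\ge m_{\mathrm{pl}}$, the minimal energy among planar competitors; hence $\hat\gamma$ is a planar minimizer. By \cite[Theorem~1.4]{miura_pinned_p} the planar minimizer is unique up to isometry and is the convex arc, which contains no straight subarc since $|P_0-P_1|<L$, contradicting that $\hat\gamma$ is a flat-core. Therefore $\gamma$ is planar, hence the convex arc of \cite[Theorem~1.4]{miura_pinned_p}; running the same argument on an arbitrary minimizer shows every minimizer is planar, yielding uniqueness up to isometry. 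I expect the main difficulty to lie in the endpoint-preservation claim: it requires reading off from Definition~\ref{def: flatcore representation} the exact flat-core geometry—parallel straight segments joined by congruent loops of axial net displacement—so that the aligning rotations act trivially on the endpoints while flattening the curve into a plane; granting this, the energy comparison and the appeal to planar uniqueness are immediate.
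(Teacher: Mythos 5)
Your proof is correct and follows essentially the same route as the paper: existence via the direct method, and planarity by observing that a non-planar minimizer would have to be a flat-core whose aligned representative is a planar minimizer of the same energy with the same endpoints, contradicting the convex-arc characterization of \cite[Theorem~1.4]{miura_pinned_p}. One minor imprecision: a flat-core need not contain a straight subarc (the segment lengths $L_j$ may all vanish), but your contradiction stands regardless, since the convex arc is in any case not a flat-core.
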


\subsection{Applications}

Langer and Singer showed in \cite{langersinger_minimax} that the only spatially stable closed elastica in $\R^3$ is the one fold circle, while other closed elasticae (e.g.\ the well-known \emph{figure-eight} curve \cite{langer-singer_classification}) are spatially unstable. 
Whether a similar result holds for the case $p\neq2$ is currently open. 
Further ``physically stable" configurations for $p=2$ are studied by considering self-intersections.
Some of such configurations are given by optimal curves for the Li--Yau type inequality in \cite{miura_LiYau} (see also \cite{elastica_survey}).
The Li–Yau type inequality establishes a sharp relationship between the bending energy and the multiplicity at a point.

First, for $p\in (1,\infty)$, we define the \emph{normalized $p$-bending energy} as 
\begin{equation*}
    \bar{\B}_p[\gamma] := \Ll [\gamma]^{p-1} \B_p[\gamma],
\end{equation*}
which is invariant under dilations, $\gamma \mapsto \Lambda \gamma$, for positive constants $\Lambda$. 
Furthermore, we say that $\gamma$ has a point $P$ of multiplicity $m$, if $\gamma^{-1}(P)$ has $m$ distinct points.

In \cite{miura_LiYau}, it is shown that there exists a universal constant $\varpi_2^*\approx28.109$ (calculated by \eqref{eq: varpi constant} with $p=2$) such that for an immersed closed curve $\gamma \in W^{2,2} (\R / \Z ; \R^n)$ with a point of multiplicity $m$,
\begin{equation*}
    \bar{\B}_2 [\gamma] \geq \varpi_2^* m^2, 
\end{equation*}
where equality holds if and only if $n\geq 3$ or $m$ is even, and $\gamma$ an \emph{$m$-leafed elastica} (see Definition~\ref{def: leafed elastica}). 

In \cite{miura_pinned_p}, the above Li--Yau type inequality is extended from $p=2$ to $p\in(1,\infty)$, instead restricting to the case $\R^2$.
This restriction is necessary since the proof depends on regularity and classification results only available for planar $p$-elasticae. 

In this work, based on our results in $\R^n$, we extend the above Li--Yau type inequalities, namely to general dimensions $n\in\N_{\geq 2}$ and general exponents $p\in(1,\infty)$.

\begin{theorem}
\label{thm: p Li-Yau}
    Let $p\in (1,\infty)$, $n\in \N_{\geq 2}$ and $\gamma \in W^{2,p}(\R/ \Z ; \R^n)$ be an immersed closed curve with a point of multiplicity $m \in \N_{\geq 2}$. Then 
    \begin{equation}\label{eq:p Li-Yau}
        \bar{\B}_p [\gamma] \geq \varpi_p^* m^p,
    \end{equation}
    with a constant $\varpi_p^*>0$ depending only on $p$ defined in \eqref{eq: varpi constant}.
    Moreover, equality is attained if and only if $\gamma$ is a closed $m$-leafed $p$-elastica (see Definition~\ref{def: leafed elastica}).
\end{theorem}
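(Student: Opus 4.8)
The plan is to reduce the Li--Yau inequality in $\R^n$ to the already-established planar result in \cite{miura_pinned_p} via a cut-and-paste argument using the classification of pinned $p$-elasticae from Theorem~\ref{thm: pinned classification}. Let $\gamma \in W^{2,p}(\R/\Z;\R^n)$ be an immersed closed curve with a point $P$ of multiplicity $m$, so that $\gamma^{-1}(P) = \{t_1,\dots,t_m\}$ with $t_1 < \dots < t_m$. First I would normalize: by dilation and reparameterization invariance of $\bar{\B}_p$ (which is the content of the normalization by $\Ll[\gamma]^{p-1}$), I may assume $\gamma$ is arclength parameterized with total length $L = \Ll[\gamma]$. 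The preimages $t_1,\dots,t_m$ cut $\gamma$ into $m$ closed sub-loops (closing up cyclically), say $\gamma_1,\dots,\gamma_m$, each an arc from $P$ back to $P$ of some length $L_i > 0$ with $\sum_i L_i = L$, and $\sum_i \B_p[\gamma_i] = \B_p[\gamma]$.

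The key step is to bound each loop's contribution from below by the optimal pinned energy. For a loop $\gamma_i$, both endpoints coincide at $P$, so $|P_0 - P_1| = 0 < L_i$, placing $\gamma_i$ in the admissible class $\A_{P,P,L_i}$. By Corollary~\ref{cor: unique pinned minimizer}, the global minimizer of $\B_p$ over $\A_{P,P,L_i}$ is planar and is the convex arc from \cite[Theorem~1.4]{miura_pinned_p}. Thus $\B_p[\gamma_i] \geq \mathcal{E}_p(L_i)$, where $\mathcal{E}_p(\ell)$ denotes the minimal $p$-bending energy among curves in $\A_{P,P,\ell}$; by scaling, $\mathcal{E}_p(\ell) = \ell^{1-p}\,\mathcal{E}_p(1) =: c_p \,\ell^{1-p}$ for a constant $c_p>0$. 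The precise value of $c_p$ is what produces $\varpi_p^*$ through \eqref{eq: varpi constant}. Summing over $i$ and applying the power-mean (Jensen) inequality to the convex function $\ell \mapsto \ell^{1-p}$ (since $1-p<0$), I obtain
\begin{equation*}
    \B_p[\gamma] \;\geq\; c_p \sum_{i=1}^m L_i^{1-p} \;\geq\; c_p\, m \left(\frac{1}{m}\sum_{i=1}^m L_i\right)^{1-p} = c_p\, m \left(\frac{L}{m}\right)^{1-p} = c_p\, m^p L^{1-p}.
\end{equation*}
Multiplying through by $L^{p-1}$ yields $\bar{\B}_p[\gamma] \geq c_p\, m^p = \varpi_p^*\, m^p$, which is the desired inequality.

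For the equality case I would trace back through each inequality. Equality in Jensen forces all loop lengths to be equal, $L_i = L/m$; equality in each pinned bound forces each $\gamma_i$ to be, up to isometry, the optimal pinned convex arc closing up at $P$. The remaining work is to show that these $m$ congruent optimal loops, fitted together at the common point $P$ so as to form a closed $W^{2,p}$ curve, assemble precisely into the $m$-leafed $p$-elastica of Definition~\ref{def: leafed elastica} --- and conversely that the $m$-leafed $p$-elastica attains equality. The main obstacle is precisely this equality analysis: the optimal pinned arc is merely planar, but the individual leaves may be arranged in different planes (exploiting the extra room in $\R^n$ for $n\geq3$), and one must verify that any such arrangement yielding a $W^{2,p}$-closed curve is congruent to the leafed $p$-elastica, handling the matching of tangent and curvature data at the junction $P$ and the possible non-planarity through the flat-core alternative of Theorem~\ref{thm: pinned classification}(ii). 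This junction regularity and the reconstruction of the global closed curve from its optimal leaves is where the technical care concentrates, paralleling but extending the planar argument of \cite{miura_pinned_p}.
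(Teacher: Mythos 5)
Your proposal is correct and follows essentially the same route as the paper: cut $\gamma$ at the multiplicity point into $m$ arcs returning to $P$, bound each arc below by the pinned minimum via Corollary~\ref{cor: unique pinned minimizer} (which reduces to the known planar value $\varpi_p^*$, exactly the paper's Proposition~\ref{prop: one leaf Li-Yau}), and combine with Jensen's inequality applied to the convex function $\ell\mapsto\ell^{1-p}$. The ``remaining work'' you flag in the equality case is not actually an obstacle: Definition~\ref{def: leafed elastica} asks only that $\gamma$ decompose into $m$ equal-length $\tfrac12$-fold figure-eight $p$-elasticae meeting at the joint, which is precisely what tracing equality through Jensen and the per-leaf bound gives, so no further congruence or assembly argument is needed; the separate question of which arrangements of leaves can actually close up is deferred to Theorem~\ref{thm: Li-Yau equality} and is not part of this statement.
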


Theorem~\ref{thm: p Li-Yau} is used to show existence of $p$-elastic $\Theta$-networks in dimension $n\geq 2$ in Section~\ref{section: p networks}, analogous to \cite[Section~5]{miura_LiYau}.
Furthermore, we obtain an optimal energy threshold for embeddings, which directly ensures embeddedness-preserving of $p$-elastic flows, see Section~\ref{section: p-elastic flows}.

\begin{corollary}
\label{cor: embedding below}
    Let $p\in (1,\infty)$ and $\gamma \in W^{2,p}(\R / \Z; \R^n)$ be an immersed curve with normalized bending energy $\bar \B_p[\gamma] < 2^p \varpi_p^*$. 
    Then $\gamma$ is an embedding, i.e.\ it has no self-intersections.
\end{corollary}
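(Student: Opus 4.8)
The plan is to argue by contraposition, using the Li--Yau type inequality of Theorem~\ref{thm: p Li-Yau} as the sole substantive input. Suppose that $\gamma$ fails to be an embedding. Then by definition there exist distinct parameters $t_1 \neq t_2 \in \R/\Z$ with $\gamma(t_1) = \gamma(t_2) =: P$, and the goal is to promote this single self-intersection into a point of multiplicity $m \geq 2$ that can be fed into \eqref{eq:p Li-Yau}.

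First I would check that the multiplicity of $P$, i.e.\ the cardinality $m := \#\gamma^{-1}(P)$, is a finite integer with $m \geq 2$. The finiteness uses that $\gamma$ is immersed: around every point of $\gamma^{-1}(P)$ the curve is locally injective, so $\gamma^{-1}(P)$ is a discrete subset of the compact set $\R/\Z$ and hence finite. Since it contains both $t_1$ and $t_2$, we get $m \geq 2$, so $\gamma$ has a genuine point of multiplicity $m \in \N_{\geq 2}$ in the sense required by Theorem~\ref{thm: p Li-Yau}.

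Next, applying Theorem~\ref{thm: p Li-Yau} to this point yields $\bar{\B}_p[\gamma] \geq \varpi_p^* m^p$. Because $m \geq 2$ and $t \mapsto t^p$ is strictly increasing on $(0,\infty)$, we have $m^p \geq 2^p$, and therefore $\bar{\B}_p[\gamma] \geq 2^p \varpi_p^*$. This contradicts the standing hypothesis $\bar{\B}_p[\gamma] < 2^p \varpi_p^*$, so $\gamma$ must in fact be an embedding, which is the assertion.

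The argument is immediate once Theorem~\ref{thm: p Li-Yau} is in hand, so I do not expect any serious obstacle; the only point requiring a moment of care is the finiteness of the multiplicity, which I would dispatch via the local injectivity furnished by the immersion condition. I would also remark that the threshold $2^p \varpi_p^*$ is sharp, corresponding exactly to the borderline case $m = 2$, which matches the equality characterization (closed $2$-leafed $p$-elasticae) in Theorem~\ref{thm: p Li-Yau}.
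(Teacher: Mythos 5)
Your proof is correct and is exactly the argument the paper intends: the corollary is stated as a direct consequence of Theorem~\ref{thm: p Li-Yau}, obtained by noting that a non-embedded immersed closed curve has a point of finite multiplicity $m\geq 2$ (finiteness from local injectivity of the immersion) and hence $\bar\B_p[\gamma]\geq \varpi_p^* m^p\geq 2^p\varpi_p^*$. No gaps; your side remark on sharpness also matches the paper, since the $1$-fold figure-eight realizes equality at $m=2$.
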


Another motivation of this generalization is that we can observe many new phenomena concerning the optimality.
Existence of curves attaining equality, or equivalently existence of closed $m$-leafed $p$-elasticae in $\R^n$, sensitively depends on the triple $(p,m,n)\in(1,\infty)\times\N_{\geq2}\times\N_{\geq2}$.
It is trivial that if there exists a closed $m$-leafed $p$-elastica in $\R^n$, then such an elastica also exists in $\R^{n+\ell}$ for every $\ell\in\N$.
Also, recall that for every even integer $m\geq2$ there exists a closed $m$-leafed $p$-elastica in $\R^2$ (thus for all $n\geq2$); it can be achieved by an $\tfrac{m}{2}$-times covered figure-eight $p$-elastica \cite{miura_pinned_p}.
Hence the only delicate case is when $m$ is odd.

In Theorem~\ref{thm: Li-Yau equality} we obtain general existence criteria for every odd multiplicity $m\geq3$.
This completely characterizes triples $(p,m,n)$ for which equality in \eqref{eq:p Li-Yau} can be attained.
Moreover, in Theorem~\ref{thm: epsilon sharpness} we show that for any other triple there exists a threshold $\eps = \eps_{m,p}>0$ (independent of the dimension $n$) such that $\bar \B_p[\gamma] \geq \varpi_p^* m^p + \eps_{m,p}$ for any closed immersed $W^{2,p}$-curve $\gamma$ with multiplicity $m$.

Following \cite{miura_pinned_p}, by using $p_m^*$ defined in \eqref{eq:p Li-Yau}, we set
\[
p^\dagger:=p_3^*\approx 1.5728,
\]
which is the unique exponent admitting a (clover-like) closed $3$-leafed $p$-elastica, see Table~\ref{table: m-leafed elasticae}.
In the planar case, we recover \cite[Theorem 1.8]{miura_pinned_p} that full optimality holds for a unique exponent.

\begin{corollary}[{\cite[Theorem 1.8]{miura_pinned_p}}]
\label{cor: p Li-Yau equality planar}
    Let $p\in (1,\infty)$ and $n=2$.
    Then, if and only if $p=p^\dagger$, equality in \eqref{eq:p Li-Yau} can be attained for every $m\in\N_{\geq 2}$.
\end{corollary}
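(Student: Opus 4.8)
The plan is to deduce the statement from the equality case of Theorem~\ref{thm: p Li-Yau} together with the complete characterization of optimal triples in Theorem~\ref{thm: Li-Yau equality}, specialized to $n=2$. First I would record the reduction. By Theorem~\ref{thm: p Li-Yau} with $n=2$, for a fixed multiplicity $m\in\N_{\geq 2}$ equality in \eqref{eq:p Li-Yau} is attainable in $\R^2$ if and only if there exists a closed $m$-leafed planar $p$-elastica (Definition~\ref{def: leafed elastica}). For even $m$ such a curve always exists, namely the $\tfrac{m}{2}$-times covered figure-eight $p$-elastica, whose normalized energy equals $\varpi_p^* m^p$ for every $p\in(1,\infty)$ (as recalled above). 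Hence the property ``equality is attainable for every $m\in\N_{\geq 2}$'' is equivalent to ``a closed $m$-leafed planar $p$-elastica exists for every odd $m\geq 3$,'' and the entire corollary reduces to the analysis of planar odd-leafed $p$-elasticae.

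For the implication that attainability for every $m$ forces $p=p^\dagger$, I would argue directly and use only the instance $m=3$. If equality in \eqref{eq:p Li-Yau} is attained for every $m$, then in particular there exists a closed $3$-leafed (clover-like) planar $p$-elastica. Since $p^\dagger=p_3^*$ is by definition the \emph{unique} exponent for which such a curve exists, we conclude $p=p^\dagger$. This half is immediate once the reduction above is in place.

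The substance lies in the converse, which is precisely the planar case ($n=2$) of the characterization in Theorem~\ref{thm: Li-Yau equality}: one must exhibit, at the \emph{single} exponent $p=p^\dagger$, a closed $m$-leafed planar $p$-elastica for \emph{every} odd $m\geq 3$, not merely for $m=3$. The plan here is to encode the existence of a planar odd-leafed $p$-elastica as a closure condition on the optimal building-block leaf: a relation between $p$ and $m$ obtained by matching the tangent directions of the $m$ congruent leaves arranged with the required rotational symmetry about the multiple point, so that the accumulated signed curvature closes the curve up in a $C^1$ fashion. Writing the solution of this relation as $p=p_m^*$, the converse becomes the assertion that $p_m^*=p^\dagger$ for all odd $m\geq 3$. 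The main obstacle is establishing exactly this rigidity, namely that the closure condition, a priori depending on $m$, in fact selects the same exponent $p^\dagger$ for every odd multiplicity. I expect to extract it from the explicit structure of the optimal leaf (the figure-eight block together with its $p$-elliptic-function parameterization from the planar theory), by showing that the transcendental angle/period quantity governing $m$-fold closure evaluates to the $p^\dagger$-selecting value uniformly in odd $m$; controlling this quantity uniformly in $m$ is the delicate step. Once both directions are in hand, the resulting optimal curves are identified through the planar classification, consistent with Corollary~\ref{cor: unique pinned minimizer}, completing the proof.
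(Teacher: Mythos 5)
Your reduction (equality for multiplicity $m$ in $\R^2$ $\Leftrightarrow$ existence of a closed planar $m$-leafed $p$-elastica, with even $m$ handled by multiply covered figure-eights) and your necessity argument via $m=3$ are both correct and consistent with the paper, which obtains this corollary from Theorem~\ref{thm: Li-Yau equality} specialized to $n=2$ (equivalently, cites the planar result directly): since $P_3=\{p_3^*\}=\{p^\dagger\}$ is a singleton, attainability for $m=3$ forces $p=p^\dagger$.

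The converse as you plan it, however, contains a genuine gap. You propose to encode existence of a planar $m$-leafed $p$-elastica as a closure condition on $m$ congruent leaves \emph{arranged with $m$-fold rotational symmetry}, to call its solution $p_m^*$, and to prove the rigidity statement ``$p_m^*=p^\dagger$ for all odd $m\geq3$.'' This statement is false, and the rotationally symmetric closure condition cannot select $p^\dagger$ uniformly in $m$. Concretely, an $m$-fold symmetric arrangement requires $\phi^*(p)=\tfrac{i\pi}{2m}$ for some even $i<m$; since $\phi^*(p^\dagger)=\tfrac{\pi}{3}$, this fails already for $m=5$ (it would need $i=10/3$). Moreover, in the paper's notation $p_m^*=\min P_m$ is \emph{strictly decreasing} in $m$ with $p_m^*\to1$, so no single exponent solves the symmetric closure problem for all odd $m$. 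The correct mechanism is the nested structure $P_3\subsetneq P_5\subsetneq\cdots$ of Theorem~\ref{thm: Li-Yau equality}: membership $p^\dagger\in P_3$ propagates to $p^\dagger\in P_m$ for every odd $m\geq3$, and the witnessing curve for $m>3$ is \emph{not} $m$-fold symmetric but is obtained by concatenating the $3$-leafed clover with $\tfrac{m-3}{2}$ figure-eight $p$-elasticae (equivalently, by allowing mixed signs $\sigma_i=\pm1$ in the angle sum $\sum_i 2\sigma_i\phi^*(p)\in 2\pi\Z$, not the all-equal-sign configuration). Without this combinatorial ingredient your ``delicate step'' is not merely delicate but unprovable, so the second half of the proof does not go through as written.
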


In addition, we can extend the above result to every dimension.
It turns out that the situation is different between $n=2$ and $n\geq3$; the above $p^\dagger$ does not extract a unique exponent but rather gives a threshold.

\begin{corollary}
\label{cor: p Li-Yau equality spatial}
    Let $p\in (1,\infty)$ and $n\in \N_{\geq 3}$.
    Then, if and only if $p\geq p^\dagger$, equality in \eqref{eq:p Li-Yau} can be attained for every $m\in\N_{\geq 2}$.
\end{corollary}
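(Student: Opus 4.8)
The plan is to deduce the corollary from the complete characterization of equality cases in Theorem~\ref{thm: Li-Yau equality}, using Theorem~\ref{thm: p Li-Yau} to translate ``equality is attained'' into a purely existence-theoretic statement. By Theorem~\ref{thm: p Li-Yau}, for a fixed multiplicity $m$ equality in \eqref{eq:p Li-Yau} is attainable in $\R^n$ if and only if there exists a closed $m$-leafed $p$-elastica in $\R^n$ (Definition~\ref{def: leafed elastica}). Hence the assertion ``equality can be attained for every $m\in\N_{\geq2}$'' is equivalent to the simultaneous existence, for all $m\geq2$, of such a curve in $\R^n$. I would first dispose of the even multiplicities: as recalled in the introduction, the $\tfrac m2$-fold covered figure-eight $p$-elastica realizes equality for every even $m$ and every $p\in(1,\infty)$, already inside $\R^2\subset\R^n$. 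This reduces the whole problem to the odd multiplicities $m\geq3$, which are governed by Theorem~\ref{thm: Li-Yau equality}.

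The next step is an intersection argument. Writing $I_m\subseteq(1,\infty)$ for the set of exponents $p$ admitting a closed $m$-leafed $p$-elastica in $\R^n$, the desired condition is $p\in\bigcap_{m\geq2}I_m=\bigcap_{m\ \mathrm{odd}}I_m$, since $I_m=(1,\infty)$ for even $m$. The crucial structural input I expect from Theorem~\ref{thm: Li-Yau equality} is that, for $n\geq3$ and odd $m\geq3$, each $I_m$ is a half-line $[\,p_m^*,\infty)$: the out-of-plane (torsional) freedom available in $\R^n$ allows a planar leaf whose opening angle exceeds the required petal spacing to still be closed up by tilting, so that realizability becomes an inequality rather than an equality. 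This is precisely what distinguishes the present case from $n=2$, where the closing condition is rigid and, by Corollary~\ref{cor: p Li-Yau equality planar}, the admissible set is pinned to a single exponent. Granting the half-line structure, $\bigcap_{m\ \mathrm{odd}}[\,p_m^*,\infty)=[\,\sup_{m\ \mathrm{odd}}p_m^*,\infty)$, and the corollary reduces to the identity $\sup_{m\ \mathrm{odd}}p_m^*=p_3^*=p^\dagger$; equivalently, that $m=3$ is the \emph{binding} multiplicity, $p_m^*\leq p_3^*$ for every odd $m\geq3$.

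With this in hand the two implications are immediate. For necessity ($\Rightarrow$) it suffices to specialize to $m=3$: if equality holds for every $m$, then in particular a closed $3$-leafed $p$-elastica exists in $\R^n$, whence $p\in I_3=[\,p^\dagger,\infty)$ by the $m=3$ case of Theorem~\ref{thm: Li-Yau equality} and the very definition $p^\dagger=p_3^*$. For sufficiency ($\Leftarrow$), $p\geq p^\dagger=\sup_{m\ \mathrm{odd}}p_m^*$ lies in every $I_m$, so every odd (hence every) multiplicity is realized. The mechanism I would use to prove $p_m^*\leq p_3^*$ is monotonicity of the leaf's opening angle: if $\beta(p)$ denotes the angle subtended at the central multiple point by the optimal leaf, then the spatial closing condition reads $\beta(p)\geq 2\pi/m$, the threshold $p_m^*$ solves $\beta(p_m^*)=2\pi/m$, and since $\beta$ is increasing in $p$ while $2\pi/m$ decreases in $m$, the thresholds $p_m^*$ are decreasing with maximum $p_3^*$. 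As consistency checks, the limit $p\to\infty$ recovers the $\infty$-elastica picture of circular-arc petals, where every multiplicity is realizable, while the classical value $p=2>p^\dagger$ recovers the known fact that all odd-leafed configurations exist in $\R^n$ for $n\geq3$.

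The main obstacle is the structural input of the second paragraph: extracting from Theorem~\ref{thm: Li-Yau equality} both the half-line shape $I_m=[\,p_m^*,\infty)$ for $n\geq3$ and, above all, the monotonicity $p_m^*\leq p_3^*$ that isolates $m=3$. This rests on a careful analysis of how the spatial closing condition depends jointly on $p$ and $m$---in particular a monotonicity statement for the opening-angle function $\beta(\cdot)$ together with a quantitative description of how much out-of-plane tilt the torsional degree of freedom supplies---whereas the even-multiplicity reduction, the necessity direction, and the final intersection are routine once this is established.
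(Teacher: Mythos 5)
Your proposal is correct and follows essentially the same route as the paper: equality for multiplicity $m$ is equivalent to existence of a closed $m$-leafed $p$-elastica (Theorem~\ref{thm: p Li-Yau}), even $m$ is handled by $\tfrac{m}{2}$-fold figure-eights, and for odd $m\geq3$ and $n\geq3$ Theorem~\ref{thm: Li-Yau equality} gives exactly the half-line $[p_m^*,\infty)$, whose intersection over odd $m$ is $[p_3^*,\infty)=[p^\dagger,\infty)$. The one point you flag as the ``main obstacle''---the monotonicity $p_m^*\leq p_3^*$---requires no new opening-angle analysis, since the strict monotonicity $p_{m+2}^*<p_m^*$ is already part of the statement of Theorem~\ref{thm: Li-Yau equality} (via $p_m^*=(\phi^*)^{-1}(\tfrac{m-1}{2m}\pi)$ and the monotonicity of $\phi^*$ from Definition~\ref{def: crossing angle}).
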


We should mention that, it is not easy to determine whether for a given exponent $p$ there exist planar $m$-leafed $p$-elasticae attaining equality in \eqref{eq:p Li-Yau}.
In \cite{miura_LiYau} it is confirmed that they do not exist for $p=2$ via Andr\'e's algebraic independence theorem.\\

\subsection{Structure of the paper}
We start in Section~\ref{section:preliminaries} by deriving initial local and global regularity from the first variation.
In Section~\ref{section:main} we prove Theorem~\ref{thm:p<=2 case}, \ref{thm:p>2 case} and \ref{thm:EL for k and tau}.
Next, we utilize these regularity results to classify pinned $p$-elastica in Section~\ref{section: pinned}. 
Section~\ref{section: applications} is devoted to the proof of Theorem~\ref{thm: p Li-Yau} and its consequences and concludes with  applications to minimal $p$-elastic networks and $p$-elastic flows.

\subsection*{Acknowledgements}
The second author is supported by JSPS KAKENHI Grant Numbers JP21H00990, JP23H00085, and JP24K00532.

\subsection*{Data availability} 
Data sharing is not applicable as we generate or analyze no data in this work.

\subsection*{Conflict of interest} 
On behalf of all authors, the corresponding author states that there is no conflict of interest.

\section{Initial regularity and local properties under curvature positivity}
\label{section:preliminaries}
By a direct computation \cite[Lemma~A.1]{miuraclassification}, Definition~\ref{def: p elastica} is equivalent to
\begin{equation}
\label{eq:first variation}
\int_0^L (1-2p) |\gamma''|^p \langle \gamma', \eta' \rangle + p |\gamma''|^{p-2} \langle \gamma'', \eta '' \rangle + \lambda \langle \gamma', \eta'\rangle = 0,
\end{equation}
for all $\eta \in C^\infty_c(0,L;\R^n)$.
By approximation, \eqref{eq:first variation} holds also for $\eta \in C^2([0,L];\R^n)$ with $\eta(0)=\eta(L)=\eta'(0)=\eta'(L)=0$.
In the case of pinned $p$-elasticae (Definition~\ref{def: pinned p elastica}), we instead consider $\eta \in C^2([0,L];\R^n)$ with $\eta(0)=\eta(L)=0$.

\begin{remark}
\label{rmk: n-1 p-elastica is n p elastica}
    Let $d<n$ be a positive integer. We note that for a $p$-elastica $\gamma \in W^{2,p}(0,L;\R^d)$, its trivial embedding $\tilde \gamma =(\gamma,0) \in W^{2,p}(0,L;\R^n)$ is also a $p$-elastica. Conversely, a  $d$-dimensional $p$-elastica $\gamma \in W^{2,p}(0,L;\R^n)$ can be seen as a $p$-elastica in $W^{2,p}(0,L;\R^d)$ by restricting $\gamma$ to its $d$-dimensional subspace.
\end{remark}

\subsection{Initial regularity}
For an arclength parameterized curve $\gamma$ we define
\begin{align*}
W&:=|\gamma''|^{p-2}\gamma'', \qquad \text{so } \gamma''= |W|^{\frac{2-p}{p-1}}W,  \\
 k&:=|\gamma''|=|\kappa|, 
  \\ \qquad w&:=|W|=k^{p-1}.
\end{align*}

We start by using the first variation \eqref{eq:first variation} to improve regularity of $k$ and $W$.

\begin{proposition}
\label{prop: initial regularity}
    Let $p\in (1,\infty)$ and $\gamma \in W^{2,p}(0,L;\R^n)$ be a $p$-elastica.
    Then
    \begin{enumerate}
        \item $\gamma \in W^{2,\infty}(0,L;\R^n)$, that is $k \in L^\infty(0,L;\R)$,
        \item $W=|\gamma''|^{p-2}\gamma'' \in W^{2,\infty}(0,L;\R^n)$.
    \end{enumerate}
\end{proposition}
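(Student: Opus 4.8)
The plan is to run the standard $p$-Laplacian-type bootstrap, but working throughout with the momentum variable $W=|\gamma''|^{p-2}\gamma''$ rather than with $\gamma''$ itself (which is only $L^p$ and whose relation to $W$ degenerates at zeros of the curvature). First I would rewrite the weak Euler--Lagrange equation \eqref{eq:first variation} in the divergence form
\[
\int_0^L \langle pW, \eta''\rangle + \langle F, \eta'\rangle\, ds = 0, \qquad F:=\big((1-2p)k^p+\lambda\big)\gamma',
\]
valid for all $\eta\in C_c^\infty(0,L;\R^n)$. Interpreting both terms as distributions paired with $\eta$ yields $(pW)''=F'$ in $\mathcal D'(0,L)$, so $(pW)'-F$ has vanishing distributional derivative and therefore equals a constant vector $A$. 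Since $\gamma\in W^{2,p}$ gives $k^p\in L^1$ and $|\gamma'|\equiv 1$, we have $F\in L^1$, whence $(pW)'=F+A\in L^1$; thus $W\in W^{1,1}(0,L)$ and, in particular, $W$ is continuous.

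From continuity of $W$ I would read off continuity of $w=|W|=k^{p-1}$, and since $x\mapsto x^{1/(p-1)}$ is continuous on $[0,\infty)$, the identity $k=w^{1/(p-1)}$ gives continuity of $k$. As $[0,L]$ is compact this yields $k\in L^\infty$, equivalently $\gamma\in W^{2,\infty}$, which is assertion~(1).

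For (2) I would bootstrap once more. With $k\in L^\infty$ we get $F\in L^\infty$, so $W'=\tfrac1p(F+A)\in L^\infty$ and $W\in W^{1,\infty}$ is Lipschitz; hence $w=|W|$ is Lipschitz as well. Writing the quantity that actually appears in $F$ as $k^p=w^{p/(p-1)}$ and noting that the power map here is $C^1$ with locally bounded derivative shows $k^p\in W^{1,\infty}$. Consequently $F=\big((1-2p)k^p+\lambda\big)\gamma'$ is a product of bounded Lipschitz functions (with $\gamma'$ Lipschitz since $\gamma''\in L^\infty$ by~(1)), so $F\in W^{1,\infty}$ and $W'=\tfrac1p(F+A)\in W^{1,\infty}$, i.e.\ $W\in W^{2,\infty}$, which is assertion~(2).

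The delicate point, and the place where the argument could go wrong if done naively, is the transfer of regularity between $W$ and $k$ through the right power. One must \emph{not} try to push Lipschitz regularity through $k=w^{1/(p-1)}$: for $p>2$ the exponent $1/(p-1)$ is less than $1$, so that map is only H\"older near the origin and would lose a derivative at curvature zeros. The resolution is that $F$ depends on $k$ only through $k^p=w^{p/(p-1)}$, and the exponent $\tfrac{p}{p-1}$ exceeds $1$ for \emph{every} $p\in(1,\infty)$, so this power is Lipschitz-preserving. Phrasing both bootstraps entirely in terms of the variable $W$ (which, unlike $\gamma''$, does not see the singularity of the map $W\mapsto\gamma''$ at points of vanishing curvature) is what makes the whole scheme valid uniformly in $p$.
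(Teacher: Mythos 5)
Your proof is correct, and for assertion (1) it takes a genuinely different route from the paper. The paper obtains $W\in L^\infty$ by a duality argument: for each $\phi\in C_c^\infty(0,L)$ it constructs a test function $\xi$ with $\xi''=\phi+2\alpha+6\beta s$, zero boundary data up to first order, and $\|\xi\|_{C^1}\leq C\|\phi\|_{L^1}$, tests the first variation with $\eta_i=\xi e_i$, and concludes $|\int pW_i\phi|\leq C'\|\phi\|_{L^1}$, hence $W_i\in L^\infty=(L^1)^*$. You instead integrate the distributional Euler--Lagrange equation once right away: since $F=((1-2p)k^p+\lambda)\gamma'\in L^1$ (which needs only $\gamma\in W^{2,p}$ and $|\gamma'|\equiv1$) and $W\in L^{p/(p-1)}\subset L^1$, the identity $(pW)'=F+A$ gives $W\in W^{1,1}(0,L)\hookrightarrow C([0,L])$, which is in fact slightly stronger than the paper's intermediate conclusion $W\in L^\infty$ and reaches $k\in L^\infty$ without any bespoke test functions. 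This buys a more unified presentation — both assertions come from two passes through the same integrated identity — at the cost of relying on the (standard but worth stating) facts that a distribution with vanishing derivative on an interval is constant and that $W^{1,1}$ embeds into $C$ in one dimension; you should also record explicitly that $W\in L^1$ before invoking $W\in W^{1,1}$. For assertion (2) your bootstrap coincides with the paper's, including the key observation that the only power of $k$ entering $F$ is $k^p=w^{p/(p-1)}$ with exponent exceeding $1$, so that Lipschitz regularity of $w=|W|$ propagates to $k^p$ for every $p\in(1,\infty)$ even though $k=w^{1/(p-1)}$ itself is only H\"older near curvature zeros when $p>2$; the paper phrases this as the local Lipschitz continuity of $x\mapsto x^{p/(p-1)}$ on $[0,\infty)$.
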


\begin{proof}
    The first part follows by the same argument as in \cite[Proposition~2.1]{miuraclassification}, repeated here for completeness. 
    Fix $\phi \in C_c^\infty(0,L)$ and
    \begin{equation*}
        \xi(s):= \int_0^s \int_0^t \phi(r) dr dt + \alpha s^2 + \beta s^3, \qquad s \in [0,L],
    \end{equation*}
    with 
    \begin{equation*}
        \alpha := \frac{1}{L} \int_0^L \phi(s) ds -\frac{3}{L^2} \int_0^L\int_0^s \phi(t) dt ds, \quad         \beta := -\frac{\alpha}{L} - \frac{1}{L^3} \int_0^L\int_0^s \phi(t) dt ds.
    \end{equation*}
    It follows that $\xi \in C^{2}[0,L]$ with $\xi(0)=\xi(L)=\xi'(0)=\xi'(L)=0$, and also $|\alpha|,|\beta|, \|\xi\|_{C^1} \leq C \|\phi\|_{L^1}$ for some $C=C(L) < \infty$. 
    Taking for $i=1,\dots,n$ the test functions $\eta_i =\xi e_i$ in \eqref{eq:first variation},
    \begin{equation*}
        \left| \int_0^L p |\gamma''|^{p-2}\gamma_i'' \phi \right| \leq C' \|\phi\|_{L^1(0,L)}, \qquad \text{where }C'=C'(L,\|\gamma\|_{W^{2,p}(0,L;\R^n)}),
    \end{equation*}
    which gives $|\gamma''|^{p-2} \gamma_i''\in L^\infty(0,L)$, and hence $\gamma'' \in L^\infty(0,L;\R^n)$.

    For the second part, by \eqref{eq:first variation}, we have in the distributional sense,
    \begin{equation*}
    \label{eq: ODE for gamma in distributional sense}
        p (|\gamma''|^{p-2}\gamma'')'' - ((1-2p)|\gamma''|^p\gamma' + \lambda \gamma')' = 0.
    \end{equation*}
    Thus for some $C\in \R^n$,
    \begin{equation}
    \label{eq: distributional EL integrate1}
    p (|\gamma''|^{p-2}\gamma'')' - \underbrace{((1-2p)|\gamma''|^p\gamma' + \lambda \gamma')}_{=:g} = C.
    \end{equation}
    Then, as $g\in L^\infty(0,L;\R^n)$, we have $(|\gamma''|^{p-2}\gamma'')' \in L^\infty(0,L;\R^n)$ and thereby $|\gamma''|^{p-2}\gamma'' \in W^{1,\infty}(0,L;\R^n)$ as well as $|\gamma''|^{p-1}\in W^{1,\infty}(0,L)$. Since $f(x) = x^{\frac{p}{p-1}}$ is locally Lipschitz in $[0,\infty)$, from $\|\gamma''\|_{L^\infty(0,L;\R^n)}<\infty $ we obtain 
    $$
    |\gamma''|^p = f\circ |\gamma''|^{p-1}  \in W^{1,\infty}(0,L).
    $$ 
    This implies that $g\in W^{1,\infty}(0,L; \R^n)$, and hence $|\gamma''|^{p-2}\gamma'' \in W^{2,\infty}(0,L; \R^n)$.
\end{proof}

We obtain directly continuity of the second derivative.

\begin{corollary}
\label{cor: gamma is C^2 and a bit more}
    Let $p\in (1,\infty)$ and $\gamma \in W^{2,p}(0,L;\R^n)$ be a $p$-elastica. Then $\gamma \in C^{2,\alpha}(0,L; \R^n)$ for $\alpha= \min\{1,\tfrac{1}{p-1}\}$, and in particular $\gamma''$ and $k$ are continuous. 
\end{corollary}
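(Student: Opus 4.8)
The plan is to recover the regularity of $\gamma''$ from the already-established regularity of $W$ through the pointwise algebraic identity $\gamma'' = |W|^{\frac{2-p}{p-1}}W$. By Proposition~\ref{prop: initial regularity}(ii) we know $W \in W^{2,\infty}(0,L;\R^n)$; on the bounded interval $(0,L)$ this embeds into $C^1$, so in particular $W$ is Lipschitz continuous. It therefore remains only to understand the regularity of the radial power map $\Phi(v) := |v|^{\sigma-1}v$ with $\sigma := \tfrac{1}{p-1}$, since $\gamma'' = \Phi(W)$ and $|\Phi(v)| = |v|^{\sigma}$ returns $k = w^{\sigma} = |W|^{1/(p-1)}$, consistent with the definitions of $k$ and $w$.

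First I would record the Hölder behaviour of $\Phi$. For $\sigma \geq 1$, i.e.\ $p \leq 2$, the map is locally Lipschitz, satisfying an estimate of the form $|\Phi(v)-\Phi(w)| \leq C\,(|v|+|w|)^{\sigma-1}|v-w|$. For $0<\sigma<1$, i.e.\ $p>2$, the map is globally $\sigma$-Hölder, with $|\Phi(v)-\Phi(w)| \leq C\,|v-w|^{\sigma}$. Both are standard inequalities for the vector-valued power map, familiar from $p$-Laplacian theory. In either case $\Phi$ is locally $\alpha$-Hölder with exponent $\alpha = \min\{1,\tfrac{1}{p-1}\}$.

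The conclusion then follows by composition. Since $W$ is Lipschitz and $\Phi$ is locally $\alpha$-Hölder, the composition $\gamma'' = \Phi \circ W$ is $\alpha$-Hölder on $(0,L)$: indeed $|\gamma''(s)-\gamma''(t)| \leq C\,|W(s)-W(t)|^{\alpha} \leq C'\,|s-t|^{\alpha}$, using the Lipschitz bound on $W$ in the last step. Hence $\gamma \in C^{2,\alpha}(0,L;\R^n)$, and the asserted continuity of $\gamma''$, and thus of $k=|\gamma''|$, is immediate.

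The main obstacle is the behaviour of $\Phi$ at $v=0$, that is, at points of vanishing curvature. In the degenerate range $p>2$ the power map fails to be Lipschitz there, which is precisely why the Hölder exponent drops to $\tfrac{1}{p-1}<1$ and cannot in general be improved; by contrast, for $p\leq 2$ the map remains Lipschitz even at the origin, yielding the full exponent $\alpha=1$. Establishing (or citing) the two elementary power-map inequalities above is the only nontrivial ingredient, while everything else reduces to composing Hölder and Lipschitz maps.
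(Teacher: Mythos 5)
Your proposal is correct and follows essentially the same route as the paper: both deduce Lipschitz continuity of $W$ from Proposition~\ref{prop: initial regularity}, observe that the power map $v\mapsto |v|^{\frac{2-p}{p-1}}v$ is locally $\alpha$-H\"older with $\alpha=\min\{1,\tfrac{1}{p-1}\}$, and conclude by composition $\gamma''=f\circ W$. The only difference is that you spell out the standard power-map inequalities, which the paper leaves implicit.
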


\begin{proof}
    As $W$ is Lipschitz continuous and $f(x)=x|x|^{\frac{2-p}{p-1}}$ is of type $C^\alpha(\R^n;\R^n)$ with $\alpha= \min\{1,\tfrac{1}{p-1}\}$, the result follows as $\gamma''=f\circ W$.
\end{proof}

\subsection{Smoothness on positivity interval}
Since the function $W \mapsto W |W|^{\frac{2-p}{p-1}}$ for $p>2$ is not differentiable at the origin, there is no straightforward way to obtain higher regularity for $\gamma$. 
However, working locally in the set where $W\neq 0$, it is possible to improve the regularity.

\begin{definition}
    Given $p\in (1,\infty)$ and $\gamma\in W^{2,p}(0,L;\R^n)$ a $p$-elastica, we say that a relatively open interval $I\subset [0,L]$ is a \emph{positivity interval} if $|\gamma''|=k>0$ in $I$ and for each endpoint $a\in \partial I$ either $k(a)=0$ or $a\in \{0,L\}$.
\end{definition}

\begin{lemma}
\label{lemma: smoothness in positivity interval}
    Let $p \in (1,\infty)$, $\gamma \in W^{2,p}(0,L;\R^n)$ be a $p$-elastica and $I$ a positivity interval. 
    Then $\gamma|_I \in C^\infty(I;\R^n)$.
\end{lemma}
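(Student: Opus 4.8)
The plan is to bootstrap regularity locally from the once-integrated Euler--Lagrange identity \eqref{eq: distributional EL integrate1}, exploiting that on a positivity interval the curvature is bounded away from zero. Since smoothness is a local property, it suffices to prove $\gamma\in C^\infty$ on an arbitrary compact subinterval $K\subset I$. On such a $K$, continuity of $k$ (Corollary~\ref{cor: gamma is C^2 and a bit more}) together with $k>0$ on $I$ yields, by compactness, a lower bound $k\geq c>0$ on $K$; hence $w=|W|=k^{p-1}\geq c^{p-1}>0$, so that $W$ takes values in the region $\R^n\setminus\{0\}$ where the relevant nonlinearities are smooth.

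The key observation is that the two maps recovering geometric quantities from $W$, namely $\Phi(W):=|W|^{\frac{2-p}{p-1}}W$ (so that $\gamma''=\Phi(W)$) and $\Psi(W):=|W|^{\frac{p}{p-1}}$ (so that $k^p=\Psi(W)$), are of class $C^\infty$ on $\R^n\setminus\{0\}$, because $W\mapsto|W|$ is smooth and strictly positive there and real powers are smooth on $(0,\infty)$. Consequently, composing either map with a curve $W$ of class $C^m$ produces a function of class $C^m$.

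Next I would read \eqref{eq: distributional EL integrate1} as the classical first-order ODE $pW'=g+C$ on $I$, where $g=(1-2p)k^p\gamma'+\lambda\gamma'$. From Proposition~\ref{prop: initial regularity} we start with $W\in W^{2,\infty}\subset C^1(K)$. The bootstrap then proceeds by induction: assuming $W\in C^m(K)$ with $m\geq1$, the observation above gives $\gamma''=\Phi(W)\in C^m$, hence $\gamma'\in C^{m+1}$, and $k^p=\Psi(W)\in C^m$; therefore $g\in C^m$ as a sum and product of functions of class $C^m$, and the ODE gives $W'=\tfrac1p(g+C)\in C^m$, i.e.\ $W\in C^{m+1}(K)$. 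Iterating yields $W\in C^\infty(K)$, whence $\gamma''=\Phi(W)\in C^\infty(K)$ and $\gamma\in C^\infty(K)$; since $K\subset I$ is arbitrary, $\gamma|_I\in C^\infty(I;\R^n)$.

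The only genuinely delicate point, and the reason the positivity hypothesis is essential, is ensuring that $W$ stays away from the origin so that $\Phi$ and $\Psi$ are smooth; at zeros of $k$ the map $\Phi$ fails to be differentiable for $p>2$, which is precisely the source of the regularity loss analyzed later. Beyond this, the one thing to check carefully is that each iteration genuinely gains a derivative despite the coupling between $W$ and $\gamma'$ inside $g$; this is automatic once one notes that along the iteration $\gamma'$ is always one order more regular than $W$, so forming the product $k^p\gamma'$ loses nothing.
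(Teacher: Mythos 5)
Your proposal is correct and follows essentially the same route as the paper: both start from the once-integrated Euler--Lagrange identity, use compactness to bound $k$ (hence $|W|$) away from zero on $K\subset\subset I$, exploit smoothness of the power nonlinearities away from the origin, and bootstrap one derivative per iteration. Your bookkeeping of the regularity gain (tracking that $\gamma'$ stays one order ahead of $W$) is a slightly more explicit rendering of the paper's implication $W\in C^m\implies\gamma\in C^{m+2}$, but the argument is the same.
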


\begin{proof}
    Fix $K \subset \subset I$, so $k|_K\geq c_K > 0$. 
    Note that by \eqref{eq: distributional EL integrate1}, 
    \begin{equation}
    \label{EL_bootstrap}
        W' = \frac{1-2p}{p} |\gamma''|^p \gamma' +\frac{\lambda}{p} \gamma' +C.
    \end{equation}
    As $f:x \mapsto x|x|^{\frac{2-p}{p-1}}$ is analytic away from the origin and $\gamma'' = W |W|^{\frac{2-p}{p-1}}$, on $K$,
    \begin{equation*}
        W \in C^m(K; \R^n) \implies \gamma \in C^{m+2}(K; \R^n).
    \end{equation*}
    Initially, we have $W \in  W^{2,\infty}(K; \R^n) \subset C^1(K; \R^n)$, so $\gamma \in C^3(K; \R^n)$.
    Then the RHS in \eqref{EL_bootstrap} is also of type $C^1(K; \R^n)$ ($x\mapsto |x|^p$ is analytic away from the origin) and thus $W\in C^2(K; \R^n)$. 
    Iterating this argument gives $\gamma \in C^m(K; \R^n)$ for any $m$.
    As $K$ was arbitrary, it follows that $\gamma \in C^\infty(I; \R^n)$.
\end{proof}

In the non-degenerate regime $p\leq 2$, the same argument gives even global information. 
This is not essential for the rest of the paper, but highlights the difference to the degenerate regime.

\begin{corollary}
     Let $p\in (1,2]$ and $\gamma \in W^{2,p}(0,L;\R^n)$ be a $p$-elastica. Then $\gamma \in C^3([0,L];\R^n)$. 
\end{corollary}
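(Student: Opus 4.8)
The plan is to show that for $p\in(1,2]$ the regularity established in Lemma~\ref{lemma: smoothness in positivity interval} on positivity intervals actually extends across the joints (points where $k=0$), giving global $C^3$ regularity. The key observation is that when $p\leq 2$ the map $f(x)=x|x|^{\frac{2-p}{p-1}}$ used in Corollary~\ref{cor: gamma is C^2 and a bit more} has exponent $\frac{1}{p-1}\geq 1$, so $f$ is genuinely $C^1$ (indeed $C^\alpha$ with $\alpha=\frac{1}{p-1}\geq 1$) on all of $\R^n$, including at the origin — this is precisely where the degenerate case $p>2$ fails, since there $\frac{1}{p-1}<1$ and $f$ is only Hölder.

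Concretely, I would argue as follows. From Proposition~\ref{prop: initial regularity} we already know $W=|\gamma''|^{p-2}\gamma''\in W^{2,\infty}(0,L;\R^n)\subset C^1([0,L];\R^n)$ globally, with no restriction to positivity intervals. The identity $\gamma''=W|W|^{\frac{2-p}{p-1}}=f\circ W$ then holds on the whole interval $[0,L]$. Since for $p\in(1,2]$ the exponent $\frac{2-p}{p-1}\geq 0$, the function $f(x)=x|x|^{\frac{2-p}{p-1}}$ is continuously differentiable on all of $\R^n$ (at the origin its derivative vanishes when the exponent is positive, and $f$ is the identity when $p=2$). Composing the $C^1$ map $f$ with the $C^1$ map $W$ yields $\gamma''\in C^1([0,L];\R^n)$, hence $\gamma\in C^3([0,L];\R^n)$ globally.

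The one point deserving care is the chain-rule differentiation at joints, i.e.\ verifying that $f\circ W$ is $C^1$ even where $W=0$. The clean way is not to differentiate $f$ at the origin directly, but to note that $f\in C^1(\R^n;\R^n)$ as a function (for $p<2$ one checks $Df(0)=0$ by the positive Hölder exponent; for $p=2$, $f=\mathrm{id}$), and that the composition of a globally $C^1$ function with a globally $C^1$ function is $C^1$, so no case distinction on whether $W(s)=0$ is needed. This sidesteps the delicate behavior that, in the degenerate regime, forces the finite regularity $C^{M_p+1}$ of flat-core solutions in Theorem~\ref{thm:p>2 case}.

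The main (and only) obstacle is thus confirming the $C^1$-regularity of $f$ across the origin for the full range $p\in(1,2]$; everything else is a direct consequence of the global $W^{2,\infty}$ bound on $W$ from Proposition~\ref{prop: initial regularity} together with the bootstrap already recorded in Lemma~\ref{lemma: smoothness in positivity interval}. I would therefore write the proof as a short remark that the argument of Lemma~\ref{lemma: smoothness in positivity interval} applies globally because, unlike the degenerate case, the nonlinearity $f$ is now $C^1$ at $0$, and conclude $\gamma\in C^3([0,L];\R^n)$.
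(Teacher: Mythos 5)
Your argument is correct and coincides with the paper's own proof: both hinge on the observation that for $p\in(1,2]$ the map $f(x)=x|x|^{\frac{2-p}{p-1}}$ is $C^1$ on all of $\R^n$ (the paper records this via the bound $|\nabla f|\leq C_{n,p}|x|^{\frac{2-p}{p-1}}$), so that $\gamma''=f\circ W$ with $W\in W^{2,\infty}\subset C^1$ gives $\gamma'''=(\nabla f\circ W)\cdot W'\in C([0,L];\R^n)$. The only cosmetic quibble is the phrase ``$C^\alpha$ with $\alpha=\frac{1}{p-1}\geq1$'', which is an abuse of H\"older notation; what you mean (and what suffices) is the vanishing of $\nabla f$ at the origin for $p<2$.
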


\begin{proof}
The function $f: \R^n \to \R^n$ given as $f(x)=(f_1(x),\dots,f_n(x)) = |x|^{\frac{2-p}{p-1}}x$ is continuously differentiable as $|\nabla f| \leq C_{n,p} |x|^{\frac{2-p}{p-1}}$. Since $W\in W^{2,\infty}(0,L;\R^n)$,
\begin{equation*}
\gamma''' = (\gamma'') ' = (f \circ W)' = (\nabla f \circ W) \cdot W', 
\end{equation*}
with the RHS in $C([0,L];\R^n)$ and hence $\gamma \in C^3([0,L];\R^n)$.
\end{proof}

\subsection{Dimensional rigidity}

Note that \eqref{eq:first variation} is formally a fourth-order ordinary differential equation for $\gamma$, depending thus on initial conditions for $\gamma$, $\gamma'$, $\gamma''$ and $\gamma'''$.
We use this fact to show that inside a positivity interval, an a priori $n$-dimensional $p$-elastica stays three dimensional. 
Note that the initial position/zeroth-order condition is redundant; a translation does not impact the dimension of the curve $\gamma$.

\begin{proposition}
\label{prop:dimension}
    Let $p\in (1,\infty)$, $\gamma \in W^{2,p}(0,L;\R^n)$ be a $p$-elastica and $I$ a positivity interval with endpoints $a$ and $b$. 
    Then the image $\gamma(I)$ lies in an affine subspace of $\R^n$ of dimension at most three, which can be characterized by
   $$S_I= \gamma\left(\tfrac{a+b}{2} \right) + \Span\left\{\gamma'\left( \tfrac{a+b}{2}\right), \gamma''\left( \tfrac{a+b}{2}\right), \gamma'''\left( \tfrac{a+b}{2}\right)\right\}.$$ 
\end{proposition}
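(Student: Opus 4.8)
The plan is to use that on the positivity interval $I$ the curve is smooth with $k>0$ (Lemma~\ref{lemma: smoothness in positivity interval}), so that the already established first integral \eqref{EL_bootstrap} becomes a genuine \emph{linear} third-order ODE for $\gamma'$ with continuous coefficients and a constant inhomogeneity. Writing $m:=\tfrac{a+b}{2}\in I$ and $V:=\Span\{\gamma'(m),\gamma''(m),\gamma'''(m)\}$, the dimension claim reduces to showing $\gamma'(s)\in V$ for every $s\in I$: integrating then yields $\gamma(s)=\gamma(m)+\int_m^s\gamma'\in\gamma(m)+V=S_I$, and $\dim V\le 3$ gives the stated bound.

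First I would rewrite \eqref{EL_bootstrap}. Recalling $W=k^{p-2}\gamma''$ and $|\gamma''|^p=k^p$, and using that $k$ is smooth and strictly positive on $I$, differentiating $W$ gives $W'=(k^{p-2})'\gamma''+k^{p-2}\gamma'''$; equating this with the right-hand side of \eqref{EL_bootstrap} and dividing by $k^{p-2}>0$ produces
\[
\gamma'''=\alpha\,\gamma'+\beta\,\gamma''+k^{2-p}C,
\]
where $\alpha=\tfrac{1-2p}{p}k^2+\tfrac{\lambda}{p}k^{2-p}$ and $\beta=-\tfrac{(k^{p-2})'}{k^{p-2}}$ are scalar functions, smooth (hence continuous) on $I$ because $k$ is smooth and positive there, and $C\in\R^n$ is the constant vector from \eqref{eq: distributional EL integrate1}. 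The crucial observation is that $C\in V$: evaluating the displayed identity at $s=m$ and solving for $C$ gives $C=k(m)^{p-2}\big(\gamma'''(m)-\alpha(m)\gamma'(m)-\beta(m)\gamma''(m)\big)$, which lies in $\Span\{\gamma'(m),\gamma''(m),\gamma'''(m)\}=V$ since $k(m)^{p-2}\neq 0$.

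With this in hand, fix any $\nu\in V^\perp$ and set $\phi:=\langle\gamma',\nu\rangle$, so that $\phi'=\langle\gamma'',\nu\rangle$ and $\phi''=\langle\gamma''',\nu\rangle$. Pairing the ODE above with $\nu$ and using $\langle C,\nu\rangle=0$ (from $C\in V$) shows that $\phi$ solves the \emph{homogeneous} linear second-order ODE $\phi''=\alpha\,\phi+\beta\,\phi'$ on $I$, with vanishing data $\phi(m)=\langle\gamma'(m),\nu\rangle=0$ and $\phi'(m)=\langle\gamma''(m),\nu\rangle=0$. Since $\alpha,\beta$ are continuous on the connected interval $I$ and $m$ is an interior point, uniqueness for linear ODEs forces $\phi\equiv 0$ on $I$. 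As $\nu\in V^\perp$ was arbitrary, $\gamma'(s)\in V$ for all $s\in I$, which completes the argument as above.

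I expect the only genuine content to be the reduction of the Euler--Lagrange equation to the clean linear form and, more importantly, the recognition that the constant $C$ lies in $V$, so that testing against $V^\perp$ kills the inhomogeneity and leaves a homogeneous ODE with trivial initial data. The derivation of $\alpha,\beta$ is routine, and the only point requiring a word of care is that the coefficients are merely continuous on the open interval $I$ (they may blow up as $k\to0$ at an endpoint of $\partial I$), but uniqueness on the connected interval $I$ through the interior point $m$ is all that is needed.
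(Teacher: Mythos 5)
Your argument is correct, and it takes a genuinely different (and arguably more elementary) route than the paper. The paper works with the full fourth-order pointwise Euler--Lagrange equation \eqref{eq:ELpointwise_complete}, solves for $\gamma''''$ by inverting the matrix $A(\gamma'')=(p-2)|x|^{p-4}xx^T+|x|^{p-2}I$ via the Sherman--Morrison formula, constructs an ansatz solution confined to the candidate subspace $S_I$, and then invokes Picard--Lindel\"of uniqueness for the resulting nonlinear system $\gamma''''=G(\gamma',\gamma'',\gamma''')$ on compact subintervals. You instead exploit the once-integrated identity \eqref{EL_bootstrap}: since $W=k^{p-2}\gamma''$ and $k$ is smooth and positive on $I$, dividing by the \emph{scalar} $k^{p-2}$ yields $\gamma'''=\alpha\gamma'+\beta\gamma''+k^{2-p}C$ with continuous scalar coefficients, no matrix inversion required; evaluating at the midpoint shows $C\in V:=\Span\{\gamma'(m),\gamma''(m),\gamma'''(m)\}$, and projecting onto any $\nu\in V^\perp$ reduces everything to a homogeneous scalar linear second-order ODE for $\phi=\langle\gamma',\nu\rangle$ with $\phi(m)=\phi'(m)=0$, so $\phi\equiv0$ by linear uniqueness (here $\alpha$, $\beta$ are treated as known continuous functions determined by the given solution $\gamma$, which is the standard and legitimate linearization-along-a-solution trick). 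This buys you a shorter computation and only requires uniqueness for linear scalar ODEs rather than for a nonlinear system, while yielding the same characterization of $S_I$. Two cosmetic slips in your opening paragraph: the equation you obtain is second order in $\gamma'$ (third order in $\gamma$), and the inhomogeneity $k^{2-p}C$ is a constant vector times a nonconstant scalar, not a constant; both are stated correctly in the body of your argument and do not affect its validity.
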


\begin{proof}
    Without loss of generality, we assume $\gamma\left(\frac{a+b}{2}\right) = 0$ and the interior vectors $\gamma'\left(\frac{a+b}{2}\right)$, $\gamma''\left(\frac{a+b}{2}\right)$ and $\gamma'''\left(\frac{a+b}{2}\right)$ to span $S_I:=\Span \{e_1,\dots,e_d\}$, with $d\leq 3$.
    
    Fix $\delta>0$ such that $[a+\delta,b-\delta] \subset \subset I$. 
    Then \eqref{eq:first variation} holds pointwise everywhere in $[a+\delta,b-\delta]$ by Lemma~\ref{lemma: smoothness in positivity interval}, that is
    \begin{equation}
    \label{eq:ELpointwise_complete}
    \begin{split}
        0 &=p (|\gamma''|^{p-2}\gamma'')'' - (1-2p)(|\gamma''|^p\gamma')' + \lambda \gamma''  \\
        &= p \big[ (p-2) \left( (p-4)|\gamma''|^{p-6} \langle \gamma'', \gamma''' \rangle^{2} \gamma'' + |\gamma''|^{p-4} \langle \gamma'', \gamma'''' \rangle \gamma''  \right. \\
        & \left. \qquad  + |\gamma''|^{p-4} \langle \gamma''', \gamma''' \rangle \gamma'' + |\gamma''|^{p-4} \langle \gamma'', \gamma''' \rangle \gamma''' \right) + |\gamma''|^{p-2} \gamma'''' \big] \\
        & \qquad  -(1-2p) \left(|\gamma''|^{p-2} \langle \gamma'', \gamma''' \rangle \gamma' + |\gamma''|^p \gamma'' \right) + \lambda \gamma''.
    \end{split}
    \end{equation}
    Furthermore, there are $c_\delta, M_\delta \in (0,\infty)$ such that
    \begin{equation*}
        |\gamma''|\geq c_\delta  \qquad \text{and} \qquad |\gamma'|, |\gamma''|, |\gamma'''| \leq M_\delta.
    \end{equation*}
    After moving terms of lower order, we obtain
    \begin{equation*}
         (p-2) |\gamma''|^{p-4} \langle \gamma'', \gamma'''' \rangle \gamma'' + |\gamma''|^{p-2} \gamma'''' = F(\gamma', \gamma'',\gamma'''),
    \end{equation*}
    with $F: B_{M_\delta}(0) \times \left(B_{M_\delta}(0) \setminus B_{c_\delta}(0)\right) \times B_{M_\delta}(0) \to \R^n$ Lipschitz (and analytic).
    The LHS can be rewritten as
    \begin{equation*}
    A(\gamma'')\gamma'''' \qquad \text{where} \qquad A(x):= (p-2) |x|^{p-4} x  x^T + |x|^{p-2}I.
    \end{equation*}
    For $|x|>0$, the matrix $|x|^{p-2}I$ is invertible regardless of $p$ and hence by the Sherman--Morrison formula, $A(x)$ is invertible on $B_{M_\delta}(0) \setminus B_{c_\delta}(0)$ with inverse
    \begin{equation*}
        A^{-1}(x)= |x|^{2-p} I - \frac{p-2}{p-1}|x|^{-p} x x^T.
    \end{equation*}
Therefore,
    \begin{equation}
    \label{eq:pointwise ODE for gamma4}
        \gamma''''(s)= A^{-1} (\gamma''(s)) F(\gamma'(s), \gamma''(s), \gamma'''(s)) = G(\gamma'(s), \gamma''(s), \gamma'''(s)),
    \end{equation}
    where $G: B_{M_\delta}(0) \times \left( B_{M_\delta}(0) \setminus B_{c_\delta}(0) \right) \times B_{M_\delta}(0) \to \R^n$ is also Lipschitz and analytic.

    Let $\iota:\R^d \to \R^n$ be the canonical injection and $\zeta:[a+\delta,b-\delta]\to \R^d$ the unique solution to \eqref{eq:pointwise ODE for gamma4} with interior initial values 
    \begin{equation*}
    \zeta^{(i)}\left(\tfrac{a+b}{2}\right)= \iota^{-1} \circ \gamma^{(i)}\left(\tfrac{a+b}{2}\right) \qquad \text{for } i=0,1,2,3.
    \end{equation*}
    Then $\tilde \gamma = \iota \circ \zeta$ defines a solution to \eqref{eq:pointwise ODE for gamma4} in $\R^n$ (thanks to the explicit forms of $A^{-1}$ and $F$) with the same interior initial condition as $\gamma$, which is at most three dimensional.

    By Picard--Lindelöf on \eqref{eq:pointwise ODE for gamma4}, it follows that $\gamma = \tilde{\gamma}$ and so $\gamma|_{[a+\delta, b-\delta]} \subset S_I$.
    Since $\delta$ is arbitrary and $S_I$ independent of $\delta$, the result follows.
\end{proof}

\subsection{Euler--Lagrange equation for the curvature}

Thanks to the regularity in the positivity interval, we are able to derive a strong (i.e.\ pointwise) Euler--Lagrange equation for the nonnegative-valued scalar curvature $k$.

\begin{proposition}
\label{prop: ODE}
    Let $p\in (1,\infty)$, $\gamma \in W^{2,p}(0,L;\R^n)$ be a $p$-elastica with $\lambda \in \R$ and $I$ a positivity interval. Then there exists $C=C(\gamma,I) \in \R$ such that for every $s\in I$,
    \begin{align}        
   p(k(s)^{p-1})''+  (p-1) k(s)^{p+1} - pC^2k(s)^{3-3p}- \lambda k(s) &=0,
\label{eq:strong EL for k} \\
     k(s)^{2p-2} \tau(s) &= C .\label{eq: EL for tau}
    \end{align}
\end{proposition}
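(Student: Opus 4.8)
The plan is to work entirely inside the positivity interval $I$, where by Lemma~\ref{lemma: smoothness in positivity interval} the curve $\gamma$ is $C^\infty$ and by Proposition~\ref{prop:dimension} its image lies in an affine subspace $S_I$ of dimension at most three; after translating, I identify $S_I$ with $\R^3$ (viewing a planar arc as lying in $\R^2\subset\R^3$). Since $k=|\gamma''|>0$ throughout $I$, the Frenet frame $T:=\gamma'$, $N:=\gamma''/k$, $B:=T\times N$ is well-defined and smooth, and the associated torsion $\tau$, determined by the Frenet--Serret relations $T'=kN$, $N'=-kT+\tau B$, $B'=-\tau N$, is a smooth scalar function on $I$ (with $\tau\equiv0$ in the planar case). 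In this frame one has $\gamma''=kN$ and hence $W=|\gamma''|^{p-2}\gamma''=k^{p-1}N$.

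The key idea is to project the pointwise Euler--Lagrange equation onto this frame, which decouples the vector equation into scalar ODEs for $k$ and $\tau$. Rather than expanding the second-order form \eqref{eq:ELpointwise_complete} directly, I would start from the once-integrated identity \eqref{EL_bootstrap}, rewritten as $pW'=\big((1-2p)k^p+\lambda\big)T+\text{const}$, and differentiate once more so that the constant vector drops out:
\[
pW''=(1-2p)p\,k^{p-1}k'\,T+\big((1-2p)k^p+\lambda\big)k\,N,
\]
which notably has no binormal component. Independently, differentiating $W=k^{p-1}N$ twice via Frenet--Serret yields
\[
W''=-(2p-1)k^{p-1}k'\,T+\big[(k^{p-1})''-k^{p+1}-k^{p-1}\tau^2\big]N+\big[2(k^{p-1})'\tau+k^{p-1}\tau'\big]B.
\]

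Equating the two expressions component by component, the $T$-component reduces to an identity and carries no information; the $N$-component, after collecting the $k^{p+1}$ terms, gives
\[
p(k^{p-1})''+(p-1)k^{p+1}-pk^{p-1}\tau^2-\lambda k=0;
\]
and the $B$-component gives $2(k^{p-1})'\tau+k^{p-1}\tau'=0$. Multiplying this last relation by $k^{p-1}>0$ and recognizing the result as $(k^{2p-2}\tau)'=0$ yields \eqref{eq: EL for tau}, i.e.\ $k^{2p-2}\tau\equiv C$ on $I$ for some constant $C$. Substituting $\tau=Ck^{2-2p}$, so that $k^{p-1}\tau^2=C^2k^{3-3p}$, into the $N$-component equation then produces \eqref{eq:strong EL for k}.

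The computation is essentially mechanical, so the part requiring care is conceptual bookkeeping rather than a deep obstacle. The main point to get right will be ensuring the Frenet frame and torsion are legitimately defined and smooth on all of $I$ (guaranteed by $k>0$ together with the smoothness from Lemma~\ref{lemma: smoothness in positivity interval}, with the planar case $\tau\equiv0$, $C=0$ covered gracefully), and correctly identifying the binormal projection as the total derivative $(k^{2p-2}\tau)'$ --- this is the conceptual heart, being precisely the angular-momentum-type first integral that integrates the Euler--Lagrange system and produces the constant $C$. The at-most-three-dimensionality supplied by Proposition~\ref{prop:dimension} is exactly what guarantees that a single torsion scalar suffices and no higher Frenet curvatures enter the analysis.
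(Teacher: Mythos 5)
Your proof is correct, but it takes a genuinely different route from the paper. You work with the \emph{strong} form: you take the once-integrated identity \eqref{EL_bootstrap}, differentiate it on $I$ (legitimate there by Lemma~\ref{lemma: smoothness in positivity interval}), and equate the resulting expression for $pW''$ with the Frenet--Serret expansion of $(k^{p-1}N)''$, reading off the two ODEs from the $N$- and $B$-components; I checked the components and they come out exactly as you state, with the $T$-component an identity and the $B$-component integrating to $(k^{2p-2}\tau)'=0$. The paper instead stays with the \emph{weak} formulation \eqref{eq:first variation} and inserts the frame-adapted test functions $\eta=\varphi N$ and $\zeta=\varphi B$ for $\varphi\in C_c^\infty$ supported in $I$, then integrates by parts to localize; this is essentially the dual of your computation. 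Within a positivity interval the two are equivalent and yours is arguably the more direct bookkeeping, since smoothness is already in hand. What the paper's test-function formulation buys is that the same technique transfers to situations where pointwise differentiation is unavailable --- it is reused in the proof of Proposition~\ref{prop: planar at the joint} to extract conditions \emph{across} a joint where $k$ vanishes and the strong equation fails --- whereas your argument is confined to the interior of $I$, which is all this proposition requires. Your handling of the frame (Proposition~\ref{prop:dimension} guaranteeing at most three dimensions so that a single torsion scalar suffices, $B=T\times N$ smooth whenever $k>0$, $\tau\equiv 0$ and $C=0$ in the planar case) matches the paper's setup.
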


\begin{proof}
    Define $T(s):=\gamma'(s)$ and
    let $\varphi \in C_c^\infty(0,L;\R^n)$ with $\spt \varphi \subset \subset I$ be arbitrary. 
    The normal $N(s):=\tfrac{\gamma''(s)}{|\gamma''(s)|}$ is well defined and differentiable on $\spt \varphi$ as $\gamma \in C^\infty(I;\R^n)$ (Lemma~\ref{lemma: smoothness in positivity interval}). Set $B(s):= \gamma'(s) \times N(s)$. 
    The torsion $\tau$ is then given as $\tau(s) = \langle N'(s), B(s) \rangle$ and is also smooth in $I$. 
   By Proposition~\ref{prop:dimension}, we have the classical Frenet--Serret equations in the at most three dimensional subspace containing $\gamma(I)$,
    \begin{equation*}
        T' = k N, \qquad N'=-kT+\tau B, \qquad B' =-\tau N.
    \end{equation*}
    We define two test functions on $[0,L]$ by
    \begin{equation*}
        \eta(s) := \begin{cases}
            \varphi(s) N(s) &\quad \text{if } s \in \spt \varphi, \\
            0 &\quad \text{otherwise},
        \end{cases} 
        \qquad \qquad
        \zeta(s) := \begin{cases}
            \varphi(s) B(s) &\quad \text{if } s \in \spt \varphi, \\
            0 &\quad \text{otherwise}.
        \end{cases} 
    \end{equation*}
    A computation from the Frenet--Serret equations yields 
    \begin{align*}
    \eta' &= \varphi' N - \varphi kT+\varphi \tau B, \\
    \eta'' &= (-2 \varphi' k- \varphi k')T + (\varphi''- \varphi k^2 - \varphi \tau^2)N + (2\varphi' \tau + \varphi \tau')B, \\
    \zeta' &= \varphi' B - \varphi \tau N, \\
    \zeta'' &= -\varphi k \tau T + (- \varphi \tau' -2\varphi' \tau )N + (\varphi'' - \varphi \tau^2)B,
    \end{align*}
    and hence
    \begin{equation*}
    \begin{split}
        \langle \gamma',\eta' \rangle &= \langle T,\eta' \rangle = -\varphi k,
       \\ \langle \gamma'',\eta'' \rangle &= k \langle N ,\eta'' \rangle = \varphi'' k - \varphi k^3 - \varphi k\tau^2,
       \\ \langle \gamma',\zeta' \rangle &= \langle T,\zeta' \rangle = 0,
       \\ \langle \gamma'',\zeta'' \rangle &= k \langle N,\zeta'' \rangle = - \varphi k \tau' - 2 \varphi' k \tau.
    \end{split}
    \end{equation*}
    Putting the expressions for $\eta$ into \eqref{eq:first variation} gives
    \begin{equation}
    \label{eq:EL1 for kappa and tau}
    \begin{split}
    0 &= \int_{\spt \varphi} (1-2p) k^p (-\varphi k) + p k^{p-2} (\varphi'' k - \varphi k^3 - \varphi k\tau^2) + \lambda (-\varphi k) \\
    &= \int_{\spt \varphi}  pk^{p-1}\varphi''+ \big( (p-1) k^{p+1} - pk^{p-1} \tau^2- \lambda k \big) \varphi.
    \end{split}
    \end{equation}
    Secondly, using $\zeta$ and integrating by parts 
    \begin{equation*}
        0=\int_{\spt \varphi} 2 k^{p-1} \tau \varphi' + k^{p-1} \tau'\varphi = \int_{\spt \varphi} (-2 (k^{p-1} \tau)' + k^{p-1} \tau' )\varphi.
    \end{equation*}
    Hence on $I$,
    \begin{equation*}
        -2(p-1) k^{p-2}k' \tau - k^{p-1} \tau' = 0,
    \end{equation*}
  and multiplying by $-k^{p-1}$, followed by integrating, yields \eqref{eq: EL for tau} for some constant $C\in \R$. 
    Since $k>0$, equivalently $\tau = \tfrac{C}{k^{2p-2}}$. 
    Upon substituting $\tau$ in \eqref{eq:EL1 for kappa and tau},
    \begin{equation*}
       0=  \int_{\spt \varphi}  pk^{p-1}\varphi''+ \big( (p-1) k^{p+1} - pC^2k^{3-3p}- \lambda k \big) \varphi,  \qquad \forall \varphi \in C_c^\infty(I).
    \end{equation*}
    After integrating the first term by parts twice, we obtain \eqref{eq:strong EL for k} by arbitrariness of $\varphi$.
\end{proof}

\begin{remark}
\label{remark: change of variable to w}
    For $p\in (1,\infty)$, a $p$-elastica $\gamma \in W^{2,p}(0,L;\R^n)$ and a positivity interval $I$, using the substitution $w:=k^{p-1}$, we obtain for $s \in I$
    \begin{equation}
    \label{eq:EL pointwise for w}
        w''(s) + \frac{p-1}{p} w(s)^{\frac{p+1}{p-1}} - C^2 w(s)^{-3} - \frac{\lambda}{p} w(s)^{\frac{1}{p-1}}=0.
    \end{equation}
    Multiplying by $2w'$ and integrating once more gives eventually
    \begin{equation}
    \label{eq:EL for w first order}
         w'(s)^2 + \frac{(p-1)^2}{p^2}  w(s)^{\frac{2p} {p-1}} - 2\lambda \frac{p-1}{p^2}w(s)^{\frac{p}{p-1}} + C^2 w(s)^{-2} =A, 
    \end{equation}
    with $A\in \R$ a constant of integration.
\end{remark}

\section{Global regularity and structure}
\label{section:main}

Given $p\in (1,\infty)$ and a $p$-elastica $\gamma \in W^{2,p}(0,L;\R^n)$, we decompose the interval $[0,L]=Y \cup Z$ where
\begin{equation*}
    Y=\{s\in[0,L]: k(s)>0\}, \qquad Z= \{s\in[0,L]: k(s)=0\}.
\end{equation*}
From the continuity of $k$ (Corollary~\ref{cor: gamma is C^2 and a bit more}), we further decompose $Y=\bigcup_{j=1}^\infty I_j$ as a countable union of disjoint relatively open (in $[0,L]$) positivity intervals. Inside each interval $I_j$, the equations \eqref{eq:EL pointwise for w} and \eqref{eq:EL for w first order} hold pointwise.
From their structure, we obtain the following lemma.

\begin{lemma}
\label{lemma: blow-up at endpoint}
    Let $p\in(1,\infty)$, $\gamma \in W^{2,p}(0,L;\R^n)$ be a $p$-elastica and $I$ a positivity interval such that $k$ vanishes at at least one endpoint. 
    Then $\gamma|_{I}$ is planar and the constant $C$ in Proposition~\ref{prop: ODE} is zero. 
\end{lemma}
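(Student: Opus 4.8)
The plan is to use the first-order conservation law \eqref{eq:EL for w first order} for $w=k^{p-1}$ to rule out a nonzero integration constant $C$, after which planarity is immediate from the torsion identity \eqref{eq: EL for tau} and the Frenet--Serret formulas. The whole argument is driven by the single observation that $w\to 0$ at the vanishing endpoint, while the term $C^2w^{-2}$ in \eqref{eq:EL for w first order} would blow up there if $C\neq 0$.

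Let $a\in\partial I$ be an endpoint at which $k$ vanishes, so $k(a)=0$. Since $k$ is continuous on $[0,L]$ (Corollary~\ref{cor: gamma is C^2 and a bit more}) and $w=k^{p-1}$, we have $w(s)\to 0$ as $s\to a$ within $I$; moreover $w$ is smooth and positive on the open interval $I$ by Lemma~\ref{lemma: smoothness in positivity interval}, so \eqref{eq:EL for w first order} holds pointwise there. Solving it for $(w')^2$ gives, for every $s\in I$,
\[
(w')^2 = A - \frac{(p-1)^2}{p^2}\,w^{\frac{2p}{p-1}} + \frac{2\lambda(p-1)}{p^2}\,w^{\frac{p}{p-1}} - C^2 w^{-2}.
\]
First I would argue $C=0$ by contradiction. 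If $C\neq 0$, then as $s\to a$ the two power terms with positive exponents $\tfrac{2p}{p-1}$ and $\tfrac{p}{p-1}$ tend to $0$, whereas $C^2w^{-2}\to+\infty$; hence the right-hand side tends to $-\infty$ and becomes negative for $s$ close enough to $a$. This contradicts $(w')^2\geq 0$, so $C=0$. Note that this needs no control whatsoever on $w'$ near $a$: only the continuity of $k$ (forcing $w\to0$) and the nonnegativity of $(w')^2$ enter.

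With $C=0$, the torsion identity \eqref{eq: EL for tau} reads $k^{2p-2}\tau=0$ on $I$, and since $k>0$ throughout the open interval $I$ this forces $\tau\equiv 0$ there. By Proposition~\ref{prop:dimension} the image $\gamma(I)$ lies in an affine subspace of dimension at most three, so the frame $\{T,N,B\}$ used in the proof of Proposition~\ref{prop: ODE} is available; vanishing torsion gives $B'=-\tau N=0$, so $B$ is a constant unit vector, and $\tfrac{d}{ds}\langle\gamma-\gamma(s_0),B\rangle=\langle T,B\rangle=0$ shows that $\gamma(I)$ lies in the affine $2$-plane through a fixed point $\gamma(s_0)$ orthogonal to $B$. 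Since $k>0$ rules out a line segment, $\gamma|_I$ is genuinely planar.

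The only delicate point, and the main obstacle, is the limiting step establishing $C=0$: one must be sure that the conservation law may be pushed up to the vanishing endpoint. This is precisely why I set up the contradiction using the pointwise identity on the interior together with the nonnegativity of $(w')^2$, so that only the boundary limit $w(s)\to 0$ --- guaranteed by continuity of $k$ --- is invoked, and no boundary regularity of $w$ is ever needed. Everything else is routine.
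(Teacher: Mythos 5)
Your proof is correct and follows essentially the same route as the paper: both derive $C=0$ by letting $s\to a$ in the first-order conservation law \eqref{eq:EL for w first order}, where the term $C^2w^{-2}$ would blow up while the remaining terms stay bounded (your rearrangement onto $(w')^2\geq 0$ is the same contradiction phrased differently), and then planarity follows from $\tau\equiv 0$ via \eqref{eq: EL for tau}. The paper leaves the final Frenet--Serret step implicit, which you spell out correctly.
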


\begin{proof}
 From \eqref{eq: EL for tau}, it suffices to show that $C=0$ on $I$. 
 Without loss of generality, we assume that $k$ (and thus $w$) vanishes at the left endpoint $a$ of $I$. 
 Suppose on the contrary that the constant $C$ is non-zero.
 Take a sequence $s_k\to a$. Then $w(s_k) \to 0$. 
 Consequently, the LHS in \eqref{eq:EL for w first order} diverges to $+\infty$, whereas the RHS remains bounded, which is a contradiction.
\end{proof}

We now have all the necessary tools to prove the main theorems. 
In short, we show that depending on the constant $C$ from Proposition~\ref{prop: ODE}, the $p$-elastica $\gamma$ is spatial and analytic, or planar, or a flat-core solution.

\subsection{The case $C\neq 0$} 

We show that if $C\neq 0$ on some positivity interval, then this interval extends to the whole of $[0,L]$.

\begin{proposition}
\label{prop:C>0}
Let $p\in(1,\infty)$ and $\gamma \in W^{2,p}(0,L;\R^n)$ be a $p$-elastica such that on some positivity interval $I$ the constant $C$ in Proposition~\ref{prop: ODE} is non-zero. 
Then $\gamma$ is analytic, three dimensional and $k,|\tau| \geq c$ in $[0,L]$, for some $c>0$. In particular, $Z= \varnothing$.
\end{proposition}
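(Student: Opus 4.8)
The plan is to use the first-order conservation law \eqref{eq:EL for w first order} to trap $w=k^{p-1}$ inside a compact subinterval of $(0,\infty)$, and then to propagate this uniform positivity from $I$ to all of $[0,L]$. On $I$ the identity \eqref{eq:EL for w first order} holds pointwise with a fixed constant $A$; writing it as $w'^2 = A - V(w)$ with
\[
V(w) := \tfrac{(p-1)^2}{p^2} w^{\frac{2p}{p-1}} - 2\lambda\tfrac{p-1}{p^2} w^{\frac{p}{p-1}} + C^2 w^{-2},
\]
the nonnegativity $w'^2\geq0$ forces $V(w(s))\leq A$ throughout $I$. Since $C\neq0$, the term $C^2 w^{-2}$ makes $V(w)\to+\infty$ as $w\to0^+$, while the leading term $\tfrac{(p-1)^2}{p^2} w^{2p/(p-1)}$ makes $V(w)\to+\infty$ as $w\to+\infty$. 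Hence there are $0<w_-\leq w_+<\infty$, depending only on $p,\lambda,C,A$, with $V(w)>A$ for $w\in(0,w_-)\cup(w_+,\infty)$; the constraint $V(w(s))\leq A$ then yields $w_-\leq w(s)\leq w_+$, i.e.\ $k$ is bounded above and below by positive constants on $I$.

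Next I would upgrade this to $Z=\varnothing$. By continuity of $k$ (Corollary~\ref{cor: gamma is C^2 and a bit more}), the lower bound $k\geq c_I>0$ persists on the closure $\bar I$, so $k$ cannot vanish at any endpoint of $I$. By the definition of a positivity interval, each endpoint must then lie in $\{0,L\}$, which forces $\bar I=[0,L]$ and hence $k\geq c_I>0$ on all of $[0,L]$. Thus $Z=\varnothing$, the interval $[0,L]$ is a single positivity interval, and the constant $C$ of Proposition~\ref{prop: ODE} is globally defined.

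Analyticity and dimension then follow from the machinery already in place. Since $k>0$ on $[0,L]$, Lemma~\ref{lemma: smoothness in positivity interval} gives $\gamma\in C^\infty$, and $\gamma$ solves the fourth-order ODE \eqref{eq:pointwise ODE for gamma4}, $\gamma''''=G(\gamma',\gamma'',\gamma''')$, with $G$ analytic on a neighborhood of the compact range of $(\gamma',\gamma'',\gamma''')$; as analytic vector fields generate analytic flows, $\gamma$ is real-analytic, and continuing the solution slightly past the endpoints yields analyticity on all of $[0,L]$. Proposition~\ref{prop:dimension} confines $\gamma([0,L])$ to the affine subspace $S_I$ of dimension at most three. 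Finally, from \eqref{eq: EL for tau} we have $\tau = C\,k^{2-2p} = C\,w^{-2}$, so $|\tau|\geq |C|\,w_+^{-2}=:c>0$; in particular the torsion never vanishes, whence $\gamma$ is not planar and therefore exactly three dimensional, and both $k$ and $|\tau|$ are bounded below by positive constants on $[0,L]$.

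I expect the main obstacle to be the boundary analysis of the second step: one must guarantee that the positivity interval truly exhausts $[0,L]$ and that the uniform lower bound on $k$ survives at the endpoints, since the definition of positivity interval only controls interior zeros of $k$ and leaves open the possibility of an endpoint at $0$ or $L$ where $k$ decays. The blow-up mechanism built into \eqref{eq:EL for w first order} (equivalently, Lemma~\ref{lemma: blow-up at endpoint}) is precisely what rules this out and closes the gap.
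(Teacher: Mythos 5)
Your proof is correct and follows essentially the same route as the paper: the blow-up of the $C^2w^{-2}$ term in \eqref{eq:EL for w first order} (which is exactly the content of Lemma~\ref{lemma: blow-up at endpoint}) forces $I=[0,L]$ and traps $w$ in a compact subinterval of $(0,\infty)$, after which smoothness, the dimension bound, and the torsion formula $\tau=Ck^{2-2p}$ give the remaining claims. The only cosmetic difference is that you obtain analyticity from the fourth-order vector ODE \eqref{eq:pointwise ODE for gamma4}, whereas the paper applies Cauchy--Kovalevskaya to the scalar equation $w''=f(w)$ and then transfers analyticity to $k$, $\tau$, and $\gamma$; both are valid.
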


\begin{proof}
    By Lemma~\ref{lemma: blow-up at endpoint}, the curvature $k$ is strictly positive at the endpoints of $I$. This implies that $I=[0,L]$. By Lemma~\ref{lemma: smoothness in positivity interval} and Proposition ~\ref{prop:dimension}, the curve $\gamma$ is smooth and at most three dimensional. Moreover, Proposition~\ref{prop: ODE} gives a constant $c>0$ such that $k,|\tau| \geq c >0$ in $[0,L]$.
    In particular, the curve $\gamma$ is non-planar on $[0,L]$. 
    
    Furthermore, \eqref{eq:EL pointwise for w} holds everywhere in $[0,L]$, i.e.\ $w''(s) = f(w(s))$ with
    \begin{equation*}
        f: w \mapsto \tfrac{\lambda}{p} w^{\frac{1}{p-1}} - \tfrac{p-1}{p}w^{\frac{p+1}{p-1}} + C^2 w^{-3}, 
    \end{equation*}
    which is analytic as long as $w\neq 0$.
    Thus $w'' \in C([0,L])$ as a composition of continuous functions, i.e.\ $w \in C^2([0,L])$. 
    By a bootstrap argument, it follows that $w \in C^\infty([0,L])$. 
    As $w\geq c^{p-1}>0$, the function $w$ is even analytic by the Cauchy--Kovalevskaya theorem. 
    It then follows that $k=w^{\frac{1}{p-1}}>0$, and $\tau = \tfrac{C}{k^{2p-2}}$, as well as $\gamma$, are analytic.
\end{proof}

\subsection{The case $C=0$}

Now we examine the case where $C=0$ and $\gamma$ is partially planar. 
If the curvature vanishes at an interior point $s_0\in (0,L)$, it remains to show how the curve connects at the joint $\gamma(s_0)$. 
Note directly that if $C=0$ on one positivity interval, then $C=0$ on any other positivity interval as well, by the same blow-up argument as in the proof of Lemma~\ref{lemma: blow-up at endpoint}.
First, from the structure of the second-order ODE for $w$, we obtain higher regularity for $w$.

\begin{lemma}
\label{lemma: w is in C^2}
    Let $p\in (1,\infty)$, $\gamma \in W^{2,p}(0,L;\R^n)$ be a $p$-elastica and $I$ a positivity interval. 
    Suppose also that the constant $C$ in Proposition~\ref{prop: ODE} is zero. 
    Then $w\in C^2(\bar I)$. 
\end{lemma}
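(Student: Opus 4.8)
The plan is to show that $w$, $w'$, and $w''$ each extend continuously to the closed interval $\bar I = [a,b]$; combined with the interior smoothness $w = k^{p-1} \in C^\infty(I)$ from Lemma~\ref{lemma: smoothness in positivity interval}, this yields $w \in C^2(\bar I)$. Recall that $w \in W^{1,\infty}(0,L)$ by Proposition~\ref{prop: initial regularity}, so $w$ is in particular continuous on $\bar I$; it remains to control $w'$ and $w''$ at the endpoints, which I treat separately, by symmetry focusing on the left endpoint $a$. If $k(a) > 0$ (which forces $a \in \{0,L\}$), then $k$ is bounded below by a positive constant near $a$ and the bootstrap of Lemma~\ref{lemma: smoothness in positivity interval} applies up to $a$, so there is nothing to show. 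The essential case is $w(a) = 0$.

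Here the hypothesis $C = 0$ is decisive, since it removes the singular terms $C^2 w^{-3}$ and $C^2 w^{-2}$ from \eqref{eq:EL pointwise for w} and \eqref{eq:EL for w first order}, leaving only positive powers of $w$. Solving \eqref{eq:EL for w first order} for $w'^2$ gives $w'(s)^2 = A - \tfrac{(p-1)^2}{p^2} w^{\frac{2p}{p-1}} + 2\lambda \tfrac{p-1}{p^2} w^{\frac{p}{p-1}}$, and since $w(s) \to 0$ as $s \to a^+$ the right-hand side converges to $A$; as $w'^2 \geq 0$ this also forces $A \geq 0$. If $A = 0$ then $w'(s) \to 0$ at once. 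If $A > 0$, then $|w'|$ stays bounded away from zero near $a$, so by continuity $w'$ has a constant sign there; as $w(a) = 0 < w(s)$ just above $a$, this sign is positive, whence $w'(s) \to \sqrt{A}$. In all cases $\lim_{s\to a^+} w'(s)$ exists.

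Finally, reading \eqref{eq:EL pointwise for w} as $w''(s) = -\tfrac{p-1}{p} w^{\frac{p+1}{p-1}} + \tfrac{\lambda}{p} w^{\frac{1}{p-1}}$ and using that both exponents $\tfrac{p+1}{p-1}, \tfrac{1}{p-1}$ are positive together with $w(s) \to 0$, we obtain $w''(s) \to 0$. Thus $w'$ and $w''$ extend continuously to $a$; repeating the argument at $b$ and invoking the mean value theorem shows that these continuous extensions are the genuine one-sided derivatives of $w$ and $w'$, giving $w \in C^2(\bar I)$. I expect the only delicate point to be excluding oscillation of $w'$ near a vanishing endpoint, which is precisely what the constant-sign argument in the case $A > 0$ resolves.
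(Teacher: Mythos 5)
Your proof is correct and follows essentially the same route as the paper: continuity of $w$ on $\bar I$ plus interior smoothness, then the first-order identity \eqref{eq:EL for w first order} with $C=0$ to get $\lim w'(s)^2 = A$ and the sign resolution via $w>0=w(a)$, and the second-order equation \eqref{eq:EL pointwise for w} for the limit of $w''$. Your additional care about the constant-sign argument for $w'$ and about the continuous extension being the genuine one-sided derivative only makes explicit what the paper's proof leaves implicit.
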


\begin{proof}
    By Corollary~\ref{cor: gamma is C^2 and a bit more} and Lemma~\ref{lemma: smoothness in positivity interval}, we have $w\in C(\bar I) \cap C^\infty(I)$, so it remains to check existence of the limits for $w'$ and $w''$ at the endpoints $a$ and $b$. 
    Since $C=0$, from \eqref{eq:EL pointwise for w}, the limit $\lim_{s\to a^+} w''(s)$ exists and from \eqref{eq:EL for w first order}, the limit $\lim_{s\to a^+} w'(s)^2$ exists as well. 
    Since $w\geq 0$, it follows that $\lim_{s\to a^+} w'(s) \geq 0$ exists. 
    An analogous argument at the endpoint $b$ gives the result. 
\end{proof}

We also have the following symmetry conditions, transferring first-order boundary conditions across the interval.

\begin{lemma}
\label{lemma: boundary condition transfers from a to b}
    Let $p\in (1,\infty)$, $\gamma \in W^{2,p}(0,L;\R^n)$ be a $p$-elastica and $I$ a positivity interval with endpoints $a$ and $b$. Suppose that $C=0$ on $I$. 
    Suppose that $w(a)=w(b)=0$ and $w'(a) = w_0>0$. 
    Then $w'(b)=-w_0$. 
    Analogously, $w'(b) = w_0<0$ implies $w'(a) = -w_0$. 
\end{lemma}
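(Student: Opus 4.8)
The plan is to read off everything from the conserved first integral \eqref{eq:EL for w first order}, which holds pointwise on the interior of the positivity interval $I$ and, with $C=0$ by hypothesis, loses its singular term and becomes
\begin{equation*}
    w'(s)^2 + \tfrac{(p-1)^2}{p^2}\, w(s)^{\frac{2p}{p-1}} - 2\lambda\tfrac{p-1}{p^2}\, w(s)^{\frac{p}{p-1}} = A
\end{equation*}
for some constant $A\in\R$. The vanishing of the term $C^2 w^{-2}$ is exactly what makes the left-hand side stay finite as $w\to 0$; this is the same mechanism used in Lemma~\ref{lemma: blow-up at endpoint} and is what underlies the regularity $w\in C^2(\bar I)$ established in Lemma~\ref{lemma: w is in C^2}.

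First I would invoke Lemma~\ref{lemma: w is in C^2} to guarantee that $w'$ extends continuously to both endpoints $a$ and $b$, so that the displayed identity may be evaluated there by passing to the limit. Using $w(a)=w(b)=0$ together with $p>1$ (hence both exponents $\tfrac{2p}{p-1}$ and $\tfrac{p}{p-1}$ are strictly positive and the two power terms vanish in the limit), the identity collapses at each endpoint to $w'(a)^2 = A = w'(b)^2$. In particular $|w'(b)| = |w'(a)| = w_0$, so the magnitude is already settled and only the sign of $w'(b)$ remains to be determined.

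To fix the sign I would use that $w=k^{p-1}\geq 0$ everywhere and $w>0$ in the interior of $I$. Since $w(b)=0$ while $w\geq 0$ for $s<b$ near $b$, the one-sided difference quotient $\frac{w(s)-w(b)}{s-b}=\frac{w(s)}{s-b}$ is nonpositive, giving $w'(b)\leq 0$; combined with $|w'(b)|=w_0>0$ this forces $w'(b)=-w_0$. The analogous statement follows by the symmetric argument at the endpoint $a$: there $w(a)=0$ with $w\geq 0$ to the right yields $w'(a)\geq 0$, so if $w'(b)=w_0<0$ then $|w'(a)|=|w_0|$ together with $w'(a)\geq 0$ gives $w'(a)=-w_0$. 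I do not expect a genuine obstacle here, since the argument is a direct evaluation of the first integral plus a one-sided-derivative sign check; the only point requiring care—the existence of finite boundary limits of $w'$—is precisely what the preceding regularity lemma (valid thanks to $C=0$) supplies.
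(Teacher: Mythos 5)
Your proof is correct and follows essentially the same route as the paper's: evaluate the first integral \eqref{eq:EL for w first order} (with the singular term absent since $C=0$) at both endpoints to get $w'(a)^2=A=w'(b)^2$, then fix the sign of $w'(b)$ from the nonnegativity of $w$ near $b$. Your explicit appeal to Lemma~\ref{lemma: w is in C^2} for the boundary limits and the one-sided difference quotient for the sign are just slightly more spelled-out versions of the paper's "by continuity of $w$ and $w'$" and "$w$ cannot approach zero from below."
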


\begin{proof}
By continuity of $w$ and $w'$, and by  \eqref{eq:EL for w first order}, we obtain $A=w_0^2$, which then gives $w(b)= \pm w_0$. Since $w>0$ on $I$, the function $w$ cannot approach zero from below and thus $w(b)= - w_0$. 
The reverse case follows in the same way.
\end{proof}

Up until now we have only worked locally inside the positivity intervals.
Now we use the global initial regularity to transfer information from one positivity interval to the next.
\begin{lemma}
\label{lemma:zero point is isolated}
Let $p\in (1,\infty)$ and $\gamma\in W^{2,p}(0,L;\R^n)$ be a $p$-elastica.
    Suppose there exists $s_0 \in (0,L)$ with $w(s_0)=0$ and either the derivative from the left $w'(s_0^-)$ or the derivative from the right $w'(s_0^+)$ is non vanishing. 
    Then, both one-sided derivatives are non-vanishing at $s_0$ and have opposite sign. 
    In particular, $s_0$ is an isolated point of $Z$.
\end{lemma}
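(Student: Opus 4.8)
The plan is to reduce everything to the first-order behaviour of the $C^1$ map $W = |\gamma''|^{p-2}\gamma''$ near $s_0$, exploiting the identity $w = |W|$. By Proposition~\ref{prop: initial regularity} we have $W \in W^{2,\infty}(0,L;\R^n) \subset C^1([0,L];\R^n)$, and since $w(s_0) = |W(s_0)| = 0$ we get $W(s_0) = 0$. Thus the analysis of the a priori non-smooth scalar function $w$ at $s_0$ becomes the analysis of the norm $|W|$ at a genuine zero of a differentiable vector field. I note that $C=0$ is automatic here: if $C \neq 0$ on some positivity interval, then $Z = \varnothing$ by Proposition~\ref{prop:C>0}, contradicting $w(s_0)=0$.

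The key step is a first-order Taylor expansion. Writing $W(s) = W'(s_0)(s-s_0) + o(|s-s_0|)$ and taking norms gives $w(s) = |W(s)| = |W'(s_0)|\,|s-s_0| + o(|s-s_0|)$ as $s \to s_0$. Dividing by $s - s_0$ and letting $s \to s_0^\pm$ shows that both one-sided derivatives exist (independently of the hypothesis) and satisfy
\[
w'(s_0^+) = |W'(s_0)|, \qquad w'(s_0^-) = -|W'(s_0)|.
\]
The hypothesis that at least one of them is non-vanishing therefore forces $|W'(s_0)| > 0$; consequently both one-sided derivatives are non-zero and have opposite signs, namely $w'(s_0^+) > 0 > w'(s_0^-)$. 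This settles the first two assertions in a single stroke, the opposite-sign property being a free consequence of tying both sides to the one vector $W'(s_0)$.

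For the isolation statement I would read off the sign information directly. From $w(s_0)=0$ and $w'(s_0^+) > 0$ there is $\delta > 0$ with $w(s) > 0$ on $(s_0, s_0+\delta)$; likewise $w'(s_0^-) < 0$ together with $w(s_0)=0$ yields $w(s) > 0$ on $(s_0-\delta, s_0)$ (here $s - s_0 < 0$ and the derivative is negative, so the product is positive). Hence $w > 0$ on a punctured neighbourhood of $s_0$, i.e.\ $Z \cap (s_0-\delta, s_0+\delta) = \{s_0\}$, so $s_0$ is an isolated point of $Z$.

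The only delicate point is the Taylor expansion itself: one must justify that the norm of a $C^1$ vector field vanishing at $s_0$ behaves like $|W'(s_0)|\,|s-s_0|$ to leading order, i.e.\ that the error term survives passing through the non-smooth norm. This follows from continuity of $|\cdot|$ applied to $W(s)/(s-s_0) = W'(s_0) + o(1)$, but it is precisely where the vector-valued nature of $W$ and the lack of differentiability of the norm at the origin interact, so it deserves to be carried out with care.
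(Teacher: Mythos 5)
Your proof is correct and takes essentially the same approach as the paper's: both rest on $W\in W^{2,\infty}\subset C^1$ from Proposition~\ref{prop: initial regularity}, the observation $W(s_0)=0$, and a first-order Taylor expansion of $W$ at $s_0$ whose norm yields the one-sided derivatives of $w=|W|$. The only difference is cosmetic — the paper first extracts $|W'(s_0)|>0$ from the hypothesis via a Cauchy--Schwarz estimate and then Taylor-expands to compute the derivative on the other side, whereas you Taylor-expand once to obtain $w'(s_0^{\pm})=\pm|W'(s_0)|$ symmetrically and only then invoke the hypothesis, which is a slight streamlining.
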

\begin{proof}
    It suffices to show that $w'(s_0^-) < 0$ implies $w'(s_0^+) > 0$; the reverse case follows by the same argument.
    
    From Proposition~\ref{prop: initial regularity}, we have $W\in W^{2,\infty}(0,L;\R^n) \subset C^1(0,L;\R^n)$ and so $|w'| \in C(0,L)$.
    Then by Cauchy--Schwarz, we obtain 
    \small
    \begin{equation*}
         0 < |w'(s_0^-)| = \lim_{s\to s_0^-}|w'(s)| = \lim_{s\to s_0^-} \left| \frac{1}{|W(s)|}\langle W(s),W'(s) \rangle \right| \leq \lim_{s\to s_0^-}|W'(s)| = |W'(s_0^-)|.
    \end{equation*}
    \normalsize
    Moreover, we have $0 \neq  W'(s_0^-) = W'(s_0) = W'(s_0^+)$. 
    Thus
    $$W(s) = W(s_0) + (s-s_0)W'(s_0) + o(|s-s_0|)= (s-s_0)W'(s_0) + o(|s-s_0|),$$
    and thereby 
    \begin{equation*}
        w'(s_0^+) = \lim_{s\to s_0^+} \frac{|W(s)|-|W(s_0)|}{s-s_0} = \lim_{s\to s_0^+} \frac{|(s-s_0)W'(s_0) + o(|s-s_0|)|}{|s-s_0|} = |W'(s_0^+)|,
    \end{equation*}
    which is strictly positive, as we wanted to show.
\end{proof}

We now show the key fact that at joints $\gamma (s)$ with $w(s)$ and $w'(s)\neq 0$, the curve $\gamma$ connects in the same plane.

\begin{proposition}
\label{prop: planar at the joint}
Let $p\in (1,\infty)$ and $\gamma \in W^{2,p}(0,L;\R^n)$ be a $p$-elastica.
    If there exists $s_0 \in (0,L)$ with $w(s_0)=0$ and $w'(s_0^-) = w_0 < 0$ (or $w'(s_0^+) = w_0 > 0$), then $\gamma$ is a wavelike planar $p$-elastica. Thus, up to  similarity, the curve $\gamma$ can be written as $(\gamma_w,0)$, where $\gamma_w \in W^{2,p}(0,L;\R^2)$ is a planar wavelike elastica from Case II in \cite[Theorem 1.2, 1.3]{miuraclassification}.
\end{proposition}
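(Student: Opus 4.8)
The plan is to show that the local structure forced by a one-sided first-order vanishing of $w$ at $s_0$ propagates to the entire curve and forces planarity, thereby reducing $\gamma$ to the known planar classification. First I would invoke Lemma~\ref{lemma:zero point is isolated}: since $w(s_0)=0$ but one one-sided derivative is nonzero, both one-sided derivatives are nonzero with opposite sign, so $s_0$ is an isolated point of the zero set $Z$. Consequently $s_0$ is a common endpoint of two adjacent positivity intervals $I_-=(\cdot,s_0)$ and $I_+=(s_0,\cdot)$. On each of these, Lemma~\ref{lemma: blow-up at endpoint} applies (as $w$ vanishes at an endpoint), giving that the constant $C$ is zero on each interval and that $\gamma$ restricted to each is planar. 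The key point to extract is that these two planar pieces in fact lie in the \emph{same} plane and join up $C^2$-smoothly, i.e.\ that $\gamma$ does not ``kink'' out of plane at the joint.

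To establish the in-plane joining, I would use the global regularity from Proposition~\ref{prop: initial regularity}, namely $W\in W^{2,\infty}\subset C^1$. The crucial object is the well-defined, nonzero vector $W'(s_0)$ (this nonvanishing is exactly the content of Lemma~\ref{lemma:zero point is isolated}, where $W'(s_0^-)=W'(s_0)=W'(s_0^+)$). From the Taylor expansion $W(s)=(s-s_0)W'(s_0)+o(|s-s_0|)$, the normalized second derivative $N=\gamma''/|\gamma''|=W/|W|$ has one-sided limits at $s_0$ equal to $\pm W'(s_0)/|W'(s_0)|$; that is, the principal normal flips sign but stays along the fixed line spanned by $W'(s_0)$. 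Together with the $C^1$-continuity of $\gamma'$ at $s_0$ (from $\gamma\in C^{2,\alpha}$, Corollary~\ref{cor: gamma is C^2 and a bit more}), the tangent and the osculating direction on both sides share a common $2$-plane $\Pi:=\gamma(s_0)+\operatorname{Span}\{\gamma'(s_0),W'(s_0)\}$. Using Proposition~\ref{prop:dimension}, the span of $\{\gamma',\gamma'',\gamma'''\}$ at an interior point of each $I_\pm$ controls the affine hull; since $w'(s_0^\pm)\neq0$ forces $\gamma'''$ to align with the same plane $\Pi$ in the limit, I would argue that both $S_{I_-}$ and $S_{I_+}$ coincide with $\Pi$, so $\gamma$ stays in $\Pi$ across $s_0$ and is therefore planar in a neighborhood of $s_0$.

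The remaining step is to upgrade this local planarity to global planarity and then read off the explicit form. Here I would combine Lemma~\ref{lemma: boundary condition transfers from a to b} (the first-order symmetry $w'(a)=-w'(b)$ across each positivity interval) with the global $C^1$-regularity of $W$ to propagate the same plane $\Pi$ across every successive joint: at each zero of $w$, the nonvanishing of $w'$ is inherited (again via Lemma~\ref{lemma:zero point is isolated}), the osculating plane is pinned to $\Pi$ by the same $W'$-alignment argument, and the curve re-enters $\Pi$. Since $[0,L]=Y\cup Z$ with $Y=\bigcup_j I_j$ and each joint is handled identically, $\gamma([0,L])\subset\Pi$, i.e.\ $\gamma$ is planar and, up to similarity, of the form $(\gamma_w,0)$ with $\gamma_w\in W^{2,p}(0,L;\R^2)$. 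Finally, the profile $w$ solves the first-order relation \eqref{eq:EL for w first order} with $C=0$ and oscillates between $0$ and a positive maximum (the values $w=0$ with $w'\neq0$ are the turning points, and $w>0$ in between), which is precisely the \emph{wavelike} regime; I would match this to Case~II of \cite[Theorem~1.2, 1.3]{miuraclassification} to identify $\gamma_w$ as a wavelike planar $p$-elastica.

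\textbf{Main obstacle.} The delicate part is the rigorous in-plane joining at the joint $s_0$, i.e.\ ruling out that the two planar arcs meet along a genuinely three-dimensional kink. The issue is that higher derivatives $\gamma'''$ and the Frenet frame degenerate as $k\to0$ (indeed $\tau=C/k^{2p-2}$ is only meaningful when $C=0$, and $N$ merely flips sign rather than varying continuously), so one cannot naively apply an ODE-uniqueness argument across $s_0$. The clean way through is to work entirely with the better-behaved quantity $W$, whose $C^1$-regularity is global: the single nonzero vector $W'(s_0)$, shared by both sides, fixes the osculating line, and $\gamma'(s_0)$ fixes the tangent, so the plane $\Pi$ is forced on both sides without ever differentiating $\gamma$ a third time at $s_0$ itself.
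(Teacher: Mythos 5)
Your proposal reaches the correct conclusion, but it handles the crucial in-plane joining at $s_0$ by a genuinely different route from the paper's. The paper proves $N_1(0)=-N_2(0)$ variationally: it builds test functions $\eta_i$ from a cutoff times the normal fields $\tilde N_i=Q_iT$ extended across the joint, inserts them into the weak first variation, and after a delicate $\eps\to0$ asymptotic analysis and two integrations by parts extracts the corner conditions $w_1'(0)=w_2'(0)\langle N_2(0),Q_1T(0)\rangle$ and its counterpart, whence $\langle N_1(0),N_2(0)\rangle=-1$. You instead read the same conclusion off the already-established global regularity $W\in W^{2,\infty}\subset C^1$ from Proposition~\ref{prop: initial regularity}: since $W(s_0)=0$ and $W'(s_0)\neq0$ (the content of Lemma~\ref{lemma:zero point is isolated}), the expansion $W(s)=(s-s_0)W'(s_0)+o(|s-s_0|)$ --- the very expansion the paper already uses inside that lemma --- forces the one-sided limits of $N=W/|W|$ to be $\pm W'(s_0)/|W'(s_0)|$; equivalently, continuity of $W'=w'N-k^pT$ across $s_0$ gives $w'(s_0^-)N_1(0)=w'(s_0^+)N_2(0)$, hence $N_1(0)=-N_2(0)$ because the one-sided derivatives of $w$ have opposite signs. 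This pins both supporting planes to $\Pi=\gamma(s_0)+\Span\{\gamma'(s_0),W'(s_0)\}$ and is a substantially shorter argument; what the paper's construction buys beyond this is the signed natural boundary condition itself, but for the present proposition only the normal flip is needed. Two caveats: your appeal to Proposition~\ref{prop:dimension} and to ``$\gamma'''$ aligning with $\Pi$ in the limit'' should be dropped --- $\gamma'''$ is unbounded at the joint when $p>2$, and the step is unnecessary since Lemma~\ref{lemma: blow-up at endpoint} already makes each adjacent arc planar and the limits of $T$ and $N$ alone determine the two planes; and the global propagation (Lemma~\ref{lemma: boundary condition transfers from a to b} combined with repeating the joint argument at every zero of $w$) together with the final identification with Case~II of the planar classification is only sketched, though at essentially the same level of detail as the paper itself.
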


\begin{proof}
We construct suitable test functions to \eqref{eq:first variation}, which give sufficient boundary conditions at the joint. 
For notational simplicity, shift the interval of parameterization to $[-s_0,L- s_0]$ such that $s_0=0$ and assume $w'(0^-) = w_0 < 0$. 
From Lemma~\ref{lemma:zero point is isolated} it follows
that $w'(0^+) >0$. Let $s_1<0<s_2$ be sufficiently small such that $w(s)>0$ in $(s_1,0) \cup (0,s_2)$. 
It suffices to show that at $\gamma(0)$ the curve $\gamma$ connects in the same plane with $|w'(0^-)|=|w'(0^+)|$ and then apply Lemma~\ref{lemma: boundary condition transfers from a to b} to each positivity interval to obtain global planarity. 

Since $w>0$ and $C=0$ (otherwise contradicting Proposition~\ref{prop:C>0}) on $(s_1,0)$ and $(0,s_2)$, the curves $\gamma_1:=\gamma|_{[s_1,0]}$ and $\gamma_2:=\gamma|_{[0,s_2]}$ are planar. 
Let $\pi_1$ and $\pi_2$ be their supporting planes. 
Define $k_1 = k|_{(s_1,0)}$, $k_2 = k|_{(0,s_2)}$ as well as
\begin{equation*}
    \begin{split}
         N_1(s) :=  \frac{\gamma''(s)}{|\gamma''(s)|} = \frac{\gamma''(s)}{k_1(s)}  \qquad \qquad \text{on } (s_1,0), \\
        N_2(s) :=  \frac{\gamma''(s)}{|\gamma''(s)|} = \frac{\gamma''(s)}{k_2(s)}   \qquad \qquad \text{on } (0,s_2) .\\
     \end{split}
\end{equation*}
Moreover, let $T(s)=\gamma'(s)$ and $Q_1, Q_2 \in \so(n)$ be the rotations in $\pi_1$ and $\pi_2$ by an angle of $\tfrac{\pi}{2}$ satisfying $N_1(s) = Q_1 T(s)$ for $s_1<s<0$ and $N_2(s) = Q_2 T(s)$ for $0<s<s_2$. 
By the continuity of $\gamma'$, the normal vectors $N_1$ and $N_2$ have limits as $s\to 0^\pm$, which a priori may not be the same.
Set now for $s \in (s_1,s_2)$, 
$$\Tilde{N}_1(s) := Q_1 T(s), \qquad \qquad \Tilde{N}_2(s) := Q_2 T(s),$$
note that $\Tilde{N}_1|_{(s_1,0)}=N_1$ and $\Tilde{N}_2|_{(0,s_2)}=N_2$.

Take $\phi \in C_{c}^{\infty}(-1,1)$ with $\phi(0)=1$, $\|\phi'\|_{L^\infty}\leq 2$ and set $\phi_{\eps}(s)=\phi(s/\eps)$ for $\eps>0$. 
Define now for $i=1,2$ the test functions $\eta_i$ as
\begin{equation*}
         \eta_i(s) = \begin{cases}
        \int_{-\eps}^s\phi'_{\eps} \Tilde{N}_i dr - \tfrac{s+\eps}{2\eps} \int_{-\eps}^{\eps} \phi'_{\eps} \Tilde{N}_i dr \quad &\text{if } s \in (-\eps, \eps), \\
        0 \quad &\text{if } s \in (-\eps, \eps)^c.
        \end{cases}
\end{equation*}
We compute
\begin{align*}
         &\eta_i'(s) =  \chi_{(-\eps,\eps)} \phi'_{\eps}(s) \Tilde{N}_i(s) -\frac{1}{2\eps}\int_{-\eps}^{\eps} \phi'_{\eps} \Tilde{N}_i, \\
          &\eta_i''(s) = \chi_{(-\eps,\eps)} (\phi''_{\eps}(s) \tilde{N}_i(s) + \phi'_{\eps}(s) \Tilde{N}_i'(s)). 
    \end{align*}
If $\eps$ is sufficiently small, then $\eta_1$ and $\eta_2$ are in $C_c^2(-s_0,L-s_0)$ and are valid test functions in \eqref{eq:first variation}. 
Plugging $\eta_1$ into \eqref{eq:first variation} gives 
\begin{equation*}
\begin{split}
0&= \int_{-s_0}^{L-s_0} (1-2p) |\gamma''|^p \langle \gamma', \eta_1' \rangle + p |\gamma''|^{p-2} \langle \gamma'', \eta_1'' \rangle + \lambda \langle \gamma', \eta_1' \rangle \\
&= \int_{-\eps}^0 (1-2p) |\gamma''|^p \langle \gamma', \eta_1' \rangle + p |\gamma''|^{p-2} \langle \gamma'', \eta_1'' \rangle + \lambda \langle \gamma', \eta_1' \rangle \\
 &\qquad  + \int_0^{\eps} (1-2p) |\gamma''|^p \langle \gamma', \eta_1' \rangle + p |\gamma''|^{p-2} \langle \gamma'', \eta_1'' \rangle + \lambda \langle \gamma', \eta_1' \rangle  =: J_1 +J_2.\\
 \end{split}
 \end{equation*}
 Using the mean value theorem twice, first on $[-\eps,\eps]$, then on $[-\eps,0]$, we estimate
\begin{equation*}
\begin{split}
    &\int_{-\eps}^0 \frac{(1-2p) |\gamma''(s)|^p +\lambda}{2\eps} \langle T(s),\smallint_{-\eps}^{\eps} \phi'_{\eps} \tilde{N}_1 \rangle ds \\
    &\quad = \int_{-\eps}^0 \frac{((1-2p) |\gamma''(s)|^p +\lambda)2 \eps}{2\eps} \phi'_{\eps}(\xi_1) \langle T(s),\tilde{N}_1(\xi_1) \rangle ds \\
    &\quad = \eps ((1-2p) |\gamma''(\xi_2)|^p +\lambda) \phi'_{\eps}(\xi_1) \langle T(\xi_2), \tilde{N}_1(\xi_1) \rangle \\
    & \quad = \eps ((1-2p) |\gamma''(\xi_2)|^p +\lambda) \frac{1}{\eps}\phi'(\xi_1/\eps) \langle T(\xi_2), \tilde{N}_1(\xi_1) \rangle \\
    &\quad= ((1-2p) |\gamma''(\xi_2)|^p +\lambda) \phi'(\xi_1/\eps) \langle T(\xi_2), \tilde{N}_1(\xi_1) \rangle  =o(1), 
\end{split}
\end{equation*}
as $\eps \to 0$.
For $\xi_1\in [-\eps, \eps]$ and $\xi_2 \in [-\eps,0]$ we have $\langle T(\xi_2), \tilde{N}_1(\xi_1) \rangle \to 0$ as $\eps \to 0$, by continuity of $T(s)$. The other terms remain uniformly bounded, hence the bound $o(1)$ as $\eps \to 0$.
 Thus for $J_1$, we calculate
 \begin{equation}
 \label{eq: final estimate for J1}
     \begin{split}
 J_1&= \int_{-\eps}^0 (1-2p) |\gamma''|^p \langle \gamma', \eta_1' \rangle + \lambda  \langle \gamma', \eta_1' \rangle \\
 &\qquad+ \int_{-\eps}^0 p |\gamma''|^{p-2}  \langle \gamma'', \phi_{\eps}'' \Tilde{N}_1 \rangle + p |\gamma''|^{p-2} \langle \gamma'', \phi'_{\eps} \Tilde{N}_1' \rangle  \\
 &= \int_{-\eps}^0 \left( (1-2p) |\gamma''|^p+ \lambda \right) \left( \phi'_{\eps} \underbrace{ \langle T,N_1 \rangle}_{=0} - \frac{1}{2\eps} \langle T,\smallint _{-\eps}^{\eps} \phi'_{\eps} \tilde{N}_1 \rangle \right)  \\
& \qquad + \int_{-\eps}^0  p k_1^{p-1} \phi_{\eps}'' \underbrace{\langle N_1, N_1 \rangle}_{=1} + p k_1^{p-1} \phi'_{\eps} \underbrace{\langle N_1, N_1' \rangle}_{=0} \\
& = o(1)+ \int_{-\eps}^0  p k_1^{p-1} \phi_{\eps}''\qquad \text{as $\eps\to 0$}. 
\end{split}
\end{equation}
On the other hand, for $J_2$ we have
\begin{equation}
\label{eq: J2}
\begin{split}
    J_2 &= \int_0^{\eps} ((1-2p) |\gamma''|^p + \lambda)  \langle T, \phi'_{\eps} \Tilde{N}_1 \rangle +((2p-1) |\gamma''|^p - \lambda)\langle T,  \frac{1}{2\eps} \smallint_{-\eps}^{\eps} \phi'_{\eps} \tilde{N}_1  \rangle  \\
    &\qquad + p |\gamma''|^{p-2}  \langle \gamma'', \phi_{\eps}'' \Tilde{N}_1 \rangle + p |\gamma''|^{p-2} \langle \gamma'', \phi_{\eps}' \Tilde{N}_1' \rangle ds.  
\end{split}
\end{equation}
For the first integrand in \eqref{eq: J2}, again from the continuity of $T(s)$, we obtain
\begin{equation*}
   \sup_{s\in [0,\eps] } \langle T(s), \phi'_{\eps}(s) \Tilde{N}_1(s) \rangle = \sup_{s\in [0,\eps] }\langle T(s), \frac{1}{\eps} \phi'(s /\eps) Q_1 T(s) \rangle = o\left( \frac{1}{\eps} \right) \qquad \text{as $\eps\to 0$}.
\end{equation*}
For the second integrand in \eqref{eq: J2}, we again use the mean value theorem on $[-\eps,\eps]$, giving
\begin{equation*}
   \sup_{s\in [0,\eps] } \frac{1}{2\eps} \langle T(s),\smallint_{-\eps}^{\eps} \phi'_{\eps} \tilde{N}_1 \rangle =  \sup_{s\in [0,\eps] }   \frac{1}{2\eps} \langle T(s), 2 \phi'(\xi/\eps) \tilde{N}_1(\xi) \rangle = o\left( \frac{1}{\eps} \right)\qquad \text{as $\eps\to 0$}. 
\end{equation*}
Finally, for the fourth (last) integrand in \eqref{eq: J2},
\begin{equation*}
    \begin{split}
  \sup_{s\in [0,\eps] } \big|p|\gamma''(s)|^{p-2} \langle \gamma''(s), \phi'_{\eps}(s) \Tilde{N}_1'(s) \rangle \big| &\leq  \frac{p}{\eps} \sup_{(-1,1)} \phi' \sup_{s\in [0,\eps] }k_2^{p-2}(s) |\langle N_2(s),Q_1 T'(s) \rangle\big| \\
   &= \frac{2p}{\eps}\sup_{s\in [0,\eps] }k_2^{p-2}(s) |\langle N_2(s), Q_1 k_2(s) N_2(s) \rangle| \\
   &\leq \frac{2p}{\eps} \sup_{s\in [0,\eps] } k_2^{p-1}(s) = o\left( \frac{1}{\eps} \right) \qquad \text{as $\eps\to 0$}.
    \end{split}
\end{equation*}
Since we integrate over an interval of length $\eps$, 
\begin{equation}
\label{eq: final estimate for J2}
   J_2 = o(1) + \int_0^{\eps} p k_2^{p-1} \phi_{\eps}'' \langle N_2,  Q_1 T \rangle ds \qquad \text{as $\eps\to 0$}.
\end{equation}
Combining the estimates \eqref{eq: final estimate for J1} and \eqref{eq: final estimate for J2}, we have
\begin{equation}
\label{eq: final J1+J2}
    0= J_1+J_2= o(1) + \int_{-\eps}^0  w_1 \phi_{\eps}''  + \int_0^{\eps} w_2 \phi_{\eps}'' \langle N_2,  Q_1 T \rangle \qquad \text{as $\eps\to 0$},
\end{equation}
where we set $w_1:= k_1^{p-1}$ and $w_2:= k_2^{p-1}$.
By Lemma~\ref{lemma: w is in C^2}, we have  $w_1 \in C^2[s_1,0]$ and $w_2 \in C^2[0,s_2]$.
As $\gamma_1:[s_1,0]\to \R^n$ and $\gamma_2:[0,s_2]\to \R^n$ are planar $p$-elasticae, from \cite[Theorem~1.7]{miuraclassification}, it follows that $\gamma_1 \in W^{3,1}(s_1,0)$ and $\gamma_2 \in W^{3,1}(0,s_2)$.
Hence
\begin{equation*}
    |k_2'| = \big||\gamma_2''|' \big| = \left| \frac{1}{|\gamma_2''|} \langle \gamma_2'', \gamma_2''' \rangle \right| \leq |\gamma_2'''| \in L^{1}(0,s_2),
\end{equation*}
and in $(0,\eps)$ we have
\begin{equation*}
\begin{split}
    (\langle N_2,Q_1 T \rangle)' &= \langle Q_2 T' ,Q_1 T \rangle + \langle Q_2 T ,Q_1 T' \rangle \\
    &= -k_2 (\langle Q_2 N_2 ,Q_1 T \rangle + \langle Q_2 T ,Q_1 N_2 \rangle) \\
    &= -k_2 (\langle Q_2^2 T ,Q_1 T \rangle + \langle Q_2 T ,Q_1 Q_2 T \rangle =: -k_2 h(s).
\end{split}
\end{equation*}
Since $h \in W^{1,\infty}(0,s_2)$ (which follows from $\gamma \in W^{2,\infty}(0,L)$) and $k_2 \in W^{1,1}(0,s_2)$, it follows that $\langle N_2, Q_1 T\rangle' \in W^{1,1}(0,s_2)$ and hence $w_2\langle N_2,Q_1 T \rangle$ is in $W^{2,1}(0,\eps)$ as a product of a $C^2$ and a $W^{2,1}$ function.

Now we integrate \eqref{eq: final J1+J2} by parts twice.
Then most boundary terms vanish as $w_1(0)=w_2(0)=0$ and $\spt \phi_{\eps} \subset (-\eps,\eps)$, giving as $\eps\to 0$,
\begin{equation*}
\begin{split}
    0 &= o(1) + \int_{-\eps}^0  w_1 \phi_{\eps}''  + \int_0^{\eps} w_2 \phi_{\eps}'' \langle N_2,  Q_1 T \rangle \\
    &= o(1)  - \int_{-\eps}^0  w_1' \phi'_{\eps}  - \int_0^{\eps}  \phi'_{\eps} \left( w_2 \langle N_2,  Q_1 T \rangle \right)' \\
    &= o(1) -\left[w_1' \phi_{\eps} \right]_{-\eps}^0 + \int_{-\eps}^0 w_1'' \phi_{\eps} - \left[\left( w_2 \langle N_2,  Q_1 T \rangle \right)' \phi_{\eps} \right]_{0}^{\eps} + \int_{0}^{\eps} \left( w_2 \langle N_2,  Q_1 T \rangle \right)'' \phi_{\eps} \\
    &=o(1) -w_1'(0) + (w_2 \langle N_2,  Q_1 T \rangle)'(0) + \int_{-\eps}^0 w_1'' \phi_{\eps} + \int_{0}^{\eps} \left( w_2 \langle N_2,  Q_1 T \rangle \right)'' \phi_{\eps}.
\end{split}
\end{equation*}
By the dominated convergence theorem, the two integral terms vanish when passing to the limit as $\eps \to 0$. 
Thus
\begin{equation*}
\begin{split}
   0&= -w_1'(0) + w_2'(0) \langle N_2, Q_1 T \rangle(0) + w_2(0) \langle N_2, Q_1 T \rangle'(0) \\
   &= -w_1'(0) + w_2'(0) \langle N_2(0), Q_1 T(0) \rangle.
\end{split}
\end{equation*}
Since $w_1'(0^-)\neq 0$, it follows that $w_2'(0^+) \neq 0$ by Lemma~\ref{lemma:zero point is isolated}, so $w_2 \equiv 0$ in $(0,s_2)$ is impossible.
By using $\eta_2$ and analogous calculations, we arrive at
\begin{equation*}
    0=  -w_2'(0) + w_1'(0) \langle N_1(0), Q_2 T(0) \rangle.
\end{equation*}
Combining the two conditions gives
\begin{equation*}
\begin{split}
    0= w_1'(0) \left( 1-\langle N_2(0), Q_1 T(0) \rangle \langle N_1(0), Q_2 T(0) \rangle \right) \\
    \implies \langle N_2(0), Q_1 T(0) \rangle = \langle N_1(0), Q_2 T(0) \rangle = \pm 1.
\end{split}
\end{equation*}
As $w_1'(0)w_2'(0)<0$, it follows that
$$-1 = \langle N_1(0), Q_2 T(0) \rangle = \langle N_2(0),Q_1 T(0) \rangle= \langle N_1(0), N_2(0) \rangle.$$ 
This implies that $N_1(0)=-N_2(0)$ and thereby 
$$\pi_1= \textnormal{span}\{N_1(0),T(0)\}= \textnormal{span}\{N_2(0),T(0)\}=\pi_2.$$
Therefore, the curve $\gamma:[s_1,s_2] \to \R^n$, given as $\gamma = \chi_{[s_1,0]} \gamma_1 + \chi_{(0,s_2]} \gamma_2$, is a planar $p$-elastica with $w(0)=0$ and $w'(0^+)$ and $w'(0^-)$ nonzero. 
Then the condition of Case (II) in \cite[Theorem 4.1]{miuraclassification} holds, thus by \cite[Theorem 1.2, Theorem 1.3]{miuraclassification} the curve $\gamma_{[s_1,s_2]}$  is a planar wavelike $p$-elastica
contained in $\pi_1=\pi_2$, finishing the proof.
\end{proof}

We are now able to prove Theorem~\ref{thm:p<=2 case}, the regularity for $p\leq 2$. We first have a short lemma, which follows directly from the Picard--Lindel\"of theorem; the proof is therefore omitted.

\begin{lemma}
\label{lemma: uniqueness for w}
    For any $p\in(1,2]$, there does not exist a non-trivial solution $u:[a,b] \to \R$ to the Cauchy problem
\begin{equation}
    \begin{cases}
        u'' + \frac{p-1}{p} u ^{\frac{p+1}{p-1}} - \frac{\lambda}{p} u^{\frac{1}{p-1}} =0,\\
        u(a)=u'(a)=0.
    \end{cases}
\end{equation}
\end{lemma}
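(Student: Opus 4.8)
The plan is to recast the second-order Cauchy problem as a first-order system and invoke the uniqueness part of the Picard--Lindel\"of theorem; the point is that for $p\in(1,2]$ the nonlinearity is locally Lipschitz near $u=0$, whereas this fails for $p>2$ (which is precisely why flat-core solutions appear in the degenerate regime). Writing $U=(u,v)$ with $v=u'$, the problem becomes $U'=F(U)$, $U(a)=(0,0)$, where
\begin{equation*}
F(u,v)=\Big(v,\ -\tfrac{p-1}{p}\,u^{\frac{p+1}{p-1}}+\tfrac{\lambda}{p}\,u^{\frac{1}{p-1}}\Big).
\end{equation*}
Since $u\equiv 0$ (equivalently $U\equiv 0$) is manifestly a solution satisfying the initial condition, it suffices to show that $F$ is locally Lipschitz in a neighbourhood of the origin. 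Local uniqueness together with a standard continuation argument then forces any solution with $U(a)=0$ to coincide with the trivial one on all of $[a,b]$, ruling out non-trivial solutions.

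The main (and essentially only) substantive step is the verification of local Lipschitz continuity, which is exactly where the hypothesis $p\le 2$ enters. The first component $v$ of $F$ is linear, hence harmless, so everything reduces to the two power nonlinearities, whose exponents are $\frac{p+1}{p-1}$ and $\frac{1}{p-1}$. For $p\in(1,2]$ we have $p-1\in(0,1]$, hence $\frac{1}{p-1}\ge 1$ and $\frac{p+1}{p-1}\ge 3\ge 1$. A power $t\mapsto t^{\alpha}$ with $\alpha\ge 1$ is locally Lipschitz on $[0,\infty)$, since its derivative $\alpha t^{\alpha-1}$ stays bounded as $t\to 0^+$; therefore both nonlinear terms, and hence $F$, are locally Lipschitz near the origin.

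The only minor technicality is that the equation a priori involves nonnegative powers of $u$ (recall that $u$ plays the role of $w=k^{p-1}\ge 0$), so $F$ must be given meaning off the half-line $u\ge 0$. This is handled by replacing $u^{\alpha}$ with the sign-preserving extension $|u|^{\alpha-1}u$, which agrees with the original on $[0,\infty)$ and remains locally Lipschitz for $\alpha\ge 1$; the extended system still has the trivial solution, and a solution of the original problem is in particular a solution of the extended one. I do not expect a genuine obstacle in this argument: the entire content is the elementary dichotomy that $\frac{1}{p-1}\ge 1$ holds if and only if $p\le 2$, which is the reason the analogous uniqueness statement, and with it the absence of non-trivial solutions, breaks down in the degenerate range $p>2$.
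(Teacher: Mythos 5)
Your proposal is correct and follows exactly the route the paper intends: the paper omits the proof, stating only that the lemma ``follows directly from the Picard--Lindel\"of theorem,'' and your argument is precisely that, with the relevant details (the reduction to a first-order system, the observation that the exponents $\tfrac{1}{p-1}\geq 1$ and $\tfrac{p+1}{p-1}\geq 3$ make the nonlinearity locally Lipschitz near the origin when $p\leq 2$, and the sign-preserving extension off $[0,\infty)$) correctly filled in.
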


\begin{proof}[Proof of Theorem~\ref{thm:p<=2 case}] 
Let $\gamma$ be a non-planar $p$-elastica for $p\leq 2$.
Then there exists some positivity interval $I$ such that \eqref{eq:EL pointwise for w} holds on $I$. 
If $C\neq 0$ on $I$, then $I=[0,L]$ and the result follows by Proposition~\ref{prop:C>0} and Proposition~\ref{prop:dimension}. 

We show that the case $C=0$ cannot occur.
Clearly, if $\bar I = [0,L]$, then $\gamma$ is planar in $I$, thus suppose that $\bar I \subsetneq [0,L]$. 
Let $s_0\in \partial I \cap (0,L)$.
Then the derivative $w'(s_0)=\lim_{I\ni s\to s_0}w'(s)$ is well-defined by Lemma~\ref{lemma: w is in C^2}.
If $w'(s_0)\neq 0$, Proposition~\ref{prop: planar at the joint} implies that $\gamma$ is planar.
If $w'(s_0)=0$, Lemma~\ref{lemma: uniqueness for w} and the contrapositive to Lemma~\ref{lemma: boundary condition transfers from a to b} give $w\equiv 0$, which is a contradiction to the definition of the positivity interval.
\end{proof}

Note that in the case $p>2$ with $w$ and $w'$ vanishing on $\partial I$, Lemma~\ref{lemma: uniqueness for w} does not hold, as the Picard--Lindelöf theorem does not apply. 
We now show that there exists a non-trivial family of solutions to \eqref{eq:EL pointwise for w}, the so-called \emph{flat-core} solutions. 
Moreover, they are possibly ``proper" curves in $\R^n$, neither planar nor spatial.

In contrast to the planar setting of \cite{miuraclassification}, we recall that in the setting here the curvature $k$ takes only nonnegative values.
To simplify notation, we define 
    \begin{equation*}
        A_{p,\lambda}:= \frac{1}{2} \left( \frac{2 \lambda}{p-1} \right)^{\frac{1}{p} },
        \end{equation*}
and for $p\in (2,\infty)$,
\begin{equation*}
    T_{p,\lambda}:= \frac{K_p(1)}{A_{p,\lambda}} = \frac{1}{A_{p,\lambda}} \int_0^{\pi/2} \frac{1}{(\cos t)^{2/p}}dt,
\end{equation*}
in accordance with \cite[Section~3]{miuraclassification}.
Also the $p$-hyperbolic secant, $\sech_p$, and $p$-hyperbolic tangent, $\tanh_p$, are defined as in \cite{miuraclassification}. 
Here and hereafter $\{e_i\}_{i=1}^n$ denotes the canonical basis of $\R^n$. 
We now define the flat-core solutions.

\begin{definition}[flat-core type curvature]
\label{def: flat-core curvature}
Given $p\in (2,\infty)$, we say that $k:[0,L] \to \R_{\geq 0}$ is of \emph{flat-core} type if there exist $\lambda > 0$, an integer $N\in \N$ and $\{s_j\}^N_{j=1} \in (-T_{p,\lambda}, L + T_{p,\lambda})$ such that $s_{j+1}\geq s_j + 2 T_{p,\lambda}$ and
    \begin{equation*}
        k(s) = \sum_{j=1}^N 2A_{p,\lambda} \sech_p(A_{p,\lambda}(s-s_j)).
    \end{equation*}
\end{definition}

By definition, the positivity sets are mutually disjoint, and in fact given by
\begin{equation*}
    I_j= \{s \in [0,L]: 2A_{p,\lambda} \sech_p(A_{p,\lambda}(s-s_j)) >0\}.
\end{equation*}

We introduce the concatenation of curves for $\gamma_1:[a_1,b_1] \to \R^n$ with $L_1=b_1-a_1$ and $\gamma_2:[a_2,b_2] \to \R^n$ with $L_2=b_2-a_2$ by
\begin{equation*}
    (\gamma_1 \oplus \gamma_2)(s) := \begin{cases}
        \gamma_1(s+a_1), & s \in [0,L_1],\\
        \gamma_2(s+a_2-L_1)+ \gamma_1(b_1)-\gamma_2(a_2), & s \in [L_1,L_1+L_2].
    \end{cases}
\end{equation*}
We also define inductively $\bigoplus_{j=1}^N \gamma_j:= \gamma_1 \oplus \dots \oplus \gamma_N = (\gamma_1 \oplus \dots \oplus \gamma_{N-1})\oplus \gamma_N$.

Moreover, for $p\in (2,\infty)$, $L \geq0$ and $\theta \in \bS^{n-2} \subset \Span\{e_2,\dots,e_n\} \subset \R^n$ we let $\gamma_l^L:[0,L]\to \R^n$ and $\gamma_b^{\theta}:[-K_p(1),K_p(1)] \to \R^n$ be given by 
\begin{align*}    
   \gamma_l^{L}(s) &:= -se_1, \\
    \gamma_b^{\theta}(s) &:= (2 \tanh_p s -s)e_1+ \tfrac{p}{p-1} (\sech_p s)^{p-1}\theta.
\end{align*}

\begin{definition}
\label{def:flatcore}
 Take $p\in (2,\infty)$, $N\in \N$, lengths $L_1, \dots,L_{N+1}\geq 0$ and directions 
$$\theta_1, \dots, \theta_N \in \bS ^{n-2}\subset \Span\{e_2,\dots,e_n\}\simeq \R^{n-1}.$$
We say that $\gamma:[0,L] \to \R^n$ is a \emph{flat-core $p$-elastica} if $\gamma''\not \equiv0$ and, up to similarity, it is represented by $\gamma (s)=\gamma_f(s+s_0)$ for some $s_0\in [0,2K_p(1)+L_1)$, where the arclength parameterized curve $\gamma_f$ is defined as
\begin{equation}\label{eq:gamma_f}
    \gamma_f := \left( \bigoplus_{j=1}^N  (\gamma_l^{L_j} \oplus \gamma_b^{\theta_j}) \right).
\end{equation} 
For an example, see Figure~\ref{fig: flat-core example1}.
\end{definition}

Note that by \cite[p.~2343]{miuraclassification}, the curvature of $\gamma_f$ is of flat-core type.
We now show that flat-core $p$-elasticae are indeed  critical points.

\begin{proposition}
    Let $p\in (2,\infty)$ and $\gamma\in W^{2,\infty}(0,L;\R^n)$ be a flat-core $p$-elastica in the sense of Definition~\ref{def:flatcore}. Then $\gamma$ is a $p$-elastica in the sense of Definition~\ref{def: p elastica}.
\end{proposition}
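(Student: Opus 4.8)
\textbf{The plan} is to verify the weak first variation \eqref{eq:first variation} directly for the explicitly given flat-core curve $\gamma_f$, exploiting the block structure of \eqref{eq:gamma_f}. The key observation is that $\gamma_f$ is built by concatenating straight segments $\gamma_l^{L_j}$ (on which $\gamma''\equiv 0$) with bump pieces $\gamma_b^{\theta_j}$ (on which the curvature is a single $\sech_p$-bump vanishing together with its first-order data at both ends). Since the straight parts contribute nothing to $\B_p$ and have $|\gamma''|^{p-2}\gamma''\equiv 0$, the entire first variation reduces to a sum of contributions from the bump intervals, which are precisely the positivity intervals $I_j$ of Definition~\ref{def: flat-core curvature}. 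On each closed bump interval the curve is smooth (Lemma~\ref{lemma: smoothness in positivity interval}), planar, and satisfies the pointwise Euler--Lagrange equations \eqref{eq:strong EL for k}--\eqref{eq: EL for tau} of Proposition~\ref{prop: ODE} with $C=0$; indeed these are exactly the $p$-elastica ODEs solved by the wavelike/orbitlike planar profiles, of which the single $\sech_p$-bump is the borderline \emph{flat-core} case.

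First I would fix an arbitrary $\eta\in C_c^\infty(0,L;\R^n)$ and split the integral in \eqref{eq:first variation} over the positivity intervals $I_j=(a_j,b_j)$ and their complement. On the complement $k\equiv 0$, so $|\gamma''|^p\equiv 0$ and $|\gamma''|^{p-2}\gamma''\equiv 0$; the only surviving term there is $\lambda\langle\gamma',\eta'\rangle$, and since on each maximal straight piece $\gamma'$ is a constant unit vector, $\int \lambda\langle\gamma',\eta'\rangle$ telescopes into boundary evaluations of $\eta$ against that constant direction. Next, on each $I_j$, because $\gamma_f$ is smooth up to the closed interval $\overline{I_j}$ with $k,k'$ vanishing at $a_j,b_j$ (this is the defining feature of the $\sech_p$ flat-core bump, and the relevant regularity $w\in C^2(\overline{I_j})$ is Lemma~\ref{lemma: w is in C^2}), I can integrate by parts twice and use the pointwise strong Euler--Lagrange equation \eqref{eq:strong EL for k} with $C=0$. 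The bulk term vanishes identically by the ODE, leaving only boundary terms of the form $pk^{p-1}$, $(k^{p-1})'$ evaluated at the endpoints, paired with $\eta,\eta'$.

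\textbf{The main obstacle}, and the crux of the argument, is showing that all these boundary terms cancel when summed over the junctions between consecutive pieces. At each interior joint $s_0$ where a bump meets a straight segment (or another bump), the quantity $k^{p-1}=w$ vanishes, and crucially its derivative $w'$ also vanishes there: this is exactly the flat-core degeneracy distinguishing Definition~\ref{def:flatcore} from the wavelike solutions of Proposition~\ref{prop: planar at the joint}. Concretely I would verify from the explicit formula $k(s)=2A_{p,\lambda}\sech_p(A_{p,\lambda}(s-s_j))$ that at the endpoints $s=\pm K_p(1)/A_{p,\lambda}$ of each bump both $w=k^{p-1}$ and its derivative $w'=(p-1)k^{p-2}k'$ tend to zero (this uses the asymptotic behaviour of $\sech_p$ near its zeros, where $p>2$ guarantees $k^{p-1}$ and its derivative decay to $0$ rather than blow up). Hence the boundary terms $(k^{p-1})'\big|_{\partial I_j}$ and $k^{p-1}\big|_{\partial I_j}$ both vanish, so each bump contributes nothing and no gluing compatibility between adjacent pieces is even required. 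It remains only to reconcile the leftover $\lambda$-telescoping terms from the straight parts with the $\langle\gamma',\eta'\rangle$ contributions from the bumps; collecting these, the full integral $\int_0^L\lambda\langle\gamma',\eta'\rangle$ is a single telescoping sum that closes up to $\lambda[\langle\gamma'(s),\eta(s)\rangle]_0^L=0$ once one accounts for the contribution already absorbed in the twice-integrated bump term via \eqref{eq:strong EL for k}. Assembling the vanishing bulk and vanishing boundary contributions yields \eqref{eq:first variation} for all test $\eta$, which is Definition~\ref{def: p elastica}.
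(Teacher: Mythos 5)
Your argument is correct and rests on the same two pillars as the paper's proof: the pointwise Euler--Lagrange equation on each smooth piece (because each $\sech_p$-bump, extended by a line, is a known planar $p$-elastica) and the vanishing of both $w=k^{p-1}$ and $w'$ at the joints, which kills the problematic boundary terms. The organization, however, is genuinely different. The paper localizes: it fixes a point $s_0$, takes $\eta$ supported near $s_0$, excises a $\delta$-neighborhood of the joint (estimating that piece as $O(\delta)$), integrates by parts only on the complement where everything is smooth and strictly positive, and sends $\delta\to0$; this deliberately avoids ever integrating by parts up to the degenerate endpoint. You instead decompose $[0,L]$ globally into the finitely many bumps and straight segments, integrate by parts on each closed piece, and observe that the $W$- and $W'$-boundary terms vanish while the $\lambda\langle\gamma',\eta\rangle$ terms telescope across the joints by continuity of $\gamma'$. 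Your version is arguably cleaner (no limiting argument, and the telescoping makes transparent why no compatibility condition between adjacent planes $\theta_j$ is needed), at the modest cost of having to justify $W=wN\in C^1(\overline{I_j})$ with $W'=0$ at the endpoints so that the two integrations by parts are legitimate up to the closed interval.

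One caveat to fix in the write-up: you cite Lemma~\ref{lemma: smoothness in positivity interval}, Lemma~\ref{lemma: w is in C^2} and Proposition~\ref{prop: ODE}, all of which \emph{presuppose} that $\gamma$ is a $p$-elastica --- exactly what is being proved --- so invoking them here is circular. This is not a real gap, because every fact you need (smoothness of $\sech_p$ away from its zeros, $w,w'\to0$ at the bump endpoints for $p>2$, and the validity of the strong equation \eqref{eq:ELpointwise_complete} on each bump) is directly verifiable from the explicit formulas and the planar results of \cite{miuraclassification} (Theorem~1.3, Propositions~3.13 and~3.18), which is precisely how the paper sources these inputs; you partly acknowledge this when you say you would check the endpoint behaviour from the explicit $\sech_p$ formula. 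Just make sure the final text routes through the planar classification rather than through the paper's a-priori lemmas for critical points.
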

\begin{proof}
    It suffices to show that for any $s_0 \in [0,L]$ there exists an open neighborhood of the form $U_0=(s_0-\eps, s_0+\eps)$ such that \eqref{eq:first variation} holds for any fixed $\eta \in C_c^\infty(U_0;\R^n)$.
    If $k(s_0)>0$ or $\gamma|_{U_0}$ planar, the result follows from the results for planar $p$-elastica in \cite{miuraclassification}.
Thus suppose $k(s_0)=0$. Take $\delta \in(0, \eps)$, then
    \begin{equation*}      
    \begin{split}
|I| &= \left| \int_{U_0} (1-2p) |\gamma''|^p \langle \gamma', \eta' \rangle + p |\gamma''|^{p-2} \langle \gamma'', \eta '' \rangle + \lambda \langle \gamma', \eta'\rangle \right|\\
&\leq \left|  \int_{U_0 \setminus (s_0-\delta, s_0+\delta)} (1-2p) |\gamma''|^p \langle \gamma', \eta' \rangle + p |\gamma''|^{p-2} \langle \gamma'', \eta '' \rangle + \lambda \langle \gamma', \eta'\rangle  \right| \\
& \qquad + \left|  \int_{s_0-\delta}^{s_0+\delta} (1-2p) |\gamma''|^p \langle \gamma', \eta' \rangle + p |\gamma''|^{p-2} \langle \gamma'', \eta '' \rangle + \lambda \langle \gamma', \eta'\rangle  \right|=:|I_1|+|I_2|.
\end{split}
\end{equation*}
We estimate $I_2$,
\begin{equation*}
\begin{split}
|I_2| &=\left|  \int_{s_0-\delta}^{s_0+\delta} (1-2p) |\gamma''|^p \langle \gamma', \eta' \rangle + p |\gamma''|^{p-2} \langle \gamma'', \eta '' \rangle + \lambda \langle \gamma', \eta'\rangle  \right| \\
 &\leq 2\delta C_p \|\eta\|_{W^{2,\infty}(U_0)}(\|\gamma''\|^p_{L^\infty(0,L)} \|\gamma'\|_{L^\infty(0,L)}  + \|\gamma''\|_{L^\infty(0,L)}^{p-1}  + \lambda \|\gamma''\|_{L^\infty(0,L)}) \\
&\leq C_p' \delta.
\end{split}
\end{equation*}
Note that $\sech_p$ and thereby the partially planar curve $\gamma$ are smooth on the intervals $(s_0-\eps,s_0-\delta]$ and $[s_0+\delta, s_0+\eps)$ by \cite[Proposition~3.13]{miuraclassification}. Integration by parts on  $(s_0-\eps,s_0-\delta]$ (similar for $[s_0+\delta, s_0+\eps)$) gives
\begin{equation*}
\begin{split}
   &\left| \int_{s_0-\eps}^{s_0-\delta} (1-2p) |\gamma''|^p \langle \gamma', \eta' \rangle + p |\gamma''|^{p-2} \langle \gamma'', \eta '' \rangle + \lambda \langle \gamma', \eta'\rangle \right| \\
    & \quad \leq (2p-1)\left|(\langle |\gamma''|^p\gamma',\eta \rangle\big|_{s_0-\eps}^{s_0-\delta} \right| + p \left| \langle|\gamma''|^{p-2}\gamma'',\eta' \rangle\big|_{s_0-\eps}^{s_0-\delta} \right|  +|\lambda|\left| \langle \gamma', \eta \rangle\big|_{s_0-\eps}^{s_0-\delta} \right| \\
    & \qquad +p \left| \langle(|\gamma''|^{p-2}\gamma'')',\eta \rangle\big|_{s_0-\eps}^{s_0-\delta}  \right| \\
    &\qquad  + \left| \int_{s_0-\eps}^{s_0-\delta}  \langle  - (1-2p)(|\gamma''|^p\gamma')'+ p (|\gamma''|^{p-2}\gamma'')'' + \lambda \gamma'', \eta \rangle  \right|.
\end{split}
\end{equation*}
By \cite[Theorem~1.3, Proposition~3.18]{miuraclassification} and Remark~\ref{rmk: n-1 p-elastica is n p elastica}, the curve $\gamma$ is a $p$-elastica in $[s_0-\eps,s_0-\delta]$ with strictly positive curvature, in particular smooth. Thus \eqref{eq:ELpointwise_complete} holds pointwise and the integral term vanishes. 

Note that from the explicit formula of $k(s)$ in Definition~\ref{def: flat-core curvature}, 
\begin{equation*}
  k \in C(U_0),\quad w=k^{p-1} \in C^1(U_0)    \quad \text{and} \quad k(s_0) = w(s_0)= w'(s_0)=0.
\end{equation*}
We now estimate the boundary terms.
First,
\begin{equation*}
\begin{split}
&\left|(1-2p) \langle |\gamma''|^p\gamma',\eta \rangle\big|_{s_0-\eps}^{s_0-\delta}   - p \langle|\gamma''|^{p-2}\gamma'',\eta' \rangle\big|_{s_0-\eps}^{s_0-\delta}\right| \\
&\qquad \qquad \leq C_p \|\gamma\|_{W^{2,\infty}(0,L)} \|\eta\|_{W^{1,\infty}(U_0)} (|k(s_0-\delta)|^{p-1} - |k(s_0-\delta)|^{p-1}) \\
& \qquad \qquad= o(1) \qquad \text{as }\delta \to 0.
\end{split}
\end{equation*}
Moreover, by the two dimensional Frenet--Serret frame in $[s_0-\eps,s_0-\delta]$,
\begin{equation*}
    (|\gamma''|^{p-2}\gamma'')' = (|\gamma''|^{p-1}N)' = (wN)'=w'N+wN'=w'N-wkT=w'N-k^pT,
\end{equation*}
and hence as $\delta \to 0$,
\begin{equation*}
\begin{split}
    \left| \langle(|\gamma''|^{p-2}\gamma'')',\eta \rangle\big|_{s_0-\eps}^{s_0-\delta}  \right|  \hspace{0.1cm} \leq \|\eta\|_{L^\infty(U_0)} (|w'(s_0-\delta)|+|k^p(s_0-\delta)|) =o(1).
\end{split}
\end{equation*}
It follows that $|I_1| = o(1)$ and thereby $|I| = o(1)$ as $\delta \to 0$. Thus $I=0$ and \eqref{eq:first variation} holds, finishing the proof.
\end{proof}

\begin{proposition}
\label{prop:flat-core is solution}
    Let $p\in (2,\infty)$ and $\gamma\in W^{2,p}(0,L;\R^n)$ be a $p$-elastica. 
    If some positivity interval $I$ is nonempty and $w=w'=0$ at an endpoint of $I$, then $\gamma$ is a flat-core $p$-elastica in the sense of Definition~\ref{def:flatcore}.
\end{proposition}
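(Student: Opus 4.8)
The plan is to first pin down the curvature on the given positivity interval $I$, then propagate the flat-core structure to all of $[0,L]$ via the dichotomy already established, and finally reconstruct the curve geometrically. Since $w$ vanishes at an endpoint $a$ of $I$, Lemma~\ref{lemma: blow-up at endpoint} forces $C=0$ on $I$, and by the same blow-up argument $C=0$ on every positivity interval. Feeding $w(a)=w'(a)=0$ into the first integral \eqref{eq:EL for w first order} (with $C=0$) yields the integration constant $A=0$ on $I$, so that there
\[
w'^2 = \tfrac{p-1}{p^2}\big(2\lambda\, w^{\frac{p}{p-1}} - (p-1)\, w^{\frac{2p}{p-1}}\big).
\]
Nonnegativity of the right-hand side for small $w>0$ forces $\lambda>0$, and the equation then integrates explicitly (as in \cite[Section~3]{miuraclassification}) to the symmetric bump $w(s)=(2A_{p,\lambda})^{p-1}\big(\sech_p(A_{p,\lambda}(s-s_I))\big)^{p-1}$, so that $I$ has length $2T_{p,\lambda}$ and $w=w'=0$ at \emph{both} of its endpoints.

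Next I would upgrade this to all positivity intervals. The hypothesis that $I$ has an endpoint with $w=w'=0$ rules out $\gamma$ being wavelike, since a wavelike planar $p$-elastica (\cite[Theorem~1.2, Theorem~1.3]{miuraclassification}) has a nonvanishing one-sided derivative of $w$ at every zero of its curvature. Hence, by the contrapositive of Proposition~\ref{prop: planar at the joint}, no interior zero $s_0$ of $w$ can have a nonvanishing one-sided derivative; equivalently $w=w'=0$ at every interior zero. Consequently each positivity interval $J$ carries $C=0$ together with $w=w'=0$ at its interior endpoints, so the computation of the previous paragraph applies verbatim with the \emph{same} $\lambda$, producing a $\sech_p$-bump of width $2T_{p,\lambda}$ on each $J$ (possibly truncated where $J$ abuts $s=0$ or $s=L$). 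Therefore $k=w^{1/(p-1)}$ is exactly of flat-core type in the sense of Definition~\ref{def: flat-core curvature}, with centers $s_j\in(-T_{p,\lambda},L+T_{p,\lambda})$.

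Finally I would reconstruct the curve. On each positivity interval $C=0$, so by Lemma~\ref{lemma: blow-up at endpoint} the arc is planar; the planar curve whose scalar curvature is the prescribed $\sech_p$-bump is, up to a rigid motion, precisely $\gamma_b^{\theta_j}$ for a unit in-plane normal direction $\theta_j$, by uniqueness of planar curves with prescribed curvature. On the complementary set $Z=\{k=0\}$ the curve is a straight segment, i.e.\ a copy of $\gamma_l^{L_j}$. Since $\gamma\in C^2$ with $|\gamma''|=k\to 0$ at the joints (where $w\to 0$), consecutive pieces glue together $C^2$-tangentially with vanishing curvature at the joints, so the concatenation is well defined. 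Matching the ordering of segments and bumps then identifies $\gamma$, up to similarity and a shift $s_0$, with $\bigoplus_{j}(\gamma_l^{L_j}\oplus\gamma_b^{\theta_j})$, i.e.\ $\gamma$ is a flat-core $p$-elastica in the sense of Definition~\ref{def:flatcore}.

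The conceptual crux is the second step: cleanly excluding the wavelike alternative so that \emph{all} joints are of flat-core type, which is where the global nature of Proposition~\ref{prop: planar at the joint} is essential. The reconstruction is mostly bookkeeping, but one must be careful that the only regularity available at a joint is $C^2$ (not smoothness) and that the strong Euler--Lagrange equation need not hold across a joint; the gluing must therefore be justified through the vanishing of $\gamma''$ rather than through pointwise solvability of the ODE.
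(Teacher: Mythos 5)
Your proposal is correct and follows essentially the same route as the paper: force $C=0$ and $w=w'=0$ at every interior joint (the paper cites its transfer lemma where you invoke the contrapositive of Proposition~\ref{prop: planar at the joint} to exclude the wavelike case — both work), integrate the first-order ODE to get the $\sech_p$-bump with a common $\lambda$ on each positivity interval, and then reconstruct the curve as lines concatenated with loops whose normal directions may rotate about the baseline in codimension. Your write-up is somewhat more detailed than the paper's (which defers the curvature identification and reconstruction to the planar reference), but the strategy and the key lemmas used are the same.
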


\begin{proof}
Let $Y=\{s\in (0,L): k(s)>0\} = \bigcup_{j} I_j$, the countable union of positivity intervals. 
By Lemma~\ref{lemma: boundary condition transfers from a to b} we have $w=w'=0$ on any $\partial I_j \subset (0,L)$.
On each $I_j=(a_j,b_j)$, the curve $\gamma$ is planar by Lemma~\ref{lemma: blow-up at endpoint}, and $k>0$. 
By arguing as in the proof of \cite[Lemma~4.8]{miuraclassification}, there exists $\bar s_j \in (a_j-T_{p,\lambda_j}, b_j+T_{p,\lambda_j})$ such that 
\begin{equation*}
    k(s)=2 A_{p,\lambda} \sech_p (A_{p,\lambda}(s-\bar s_j)),  \qquad  s \in I_j,
\end{equation*}
where $\lambda\in\R$ is the (unique) constant for which $\gamma$ satisfies \eqref{eq: first variation frechet}.
This means that $k:[0,L] \to [0,\infty)$ is of flat-core type.
Then, again arguing as in the planar case (cf.\ \cite[p.\ 2344]{miuraclassification}), and observing the elementary geometric fact that, in general codimension $n\geq2$, the loop may undergo not only reflection but also rotation with respect to the baseline (see Figure \ref{fig: flat-core example1}), we arrive at the conclusion.
\end{proof}

We are now able to prove Theorem~\ref{thm:p>2 case}, the degenerate setting.

\begin{proof}[Proof of Theorem~\ref{thm:p>2 case}] 
Let $\gamma$ be a non-planar $p$-elastica for $p> 2$. 
Then there exists some positivity interval $I$ such that \eqref{eq:EL pointwise for w} holds in $I$. If $C\neq 0$ on $I$, then $I=[0,L]$ and the result follows by Proposition~\ref{prop:C>0} and Proposition~\ref{prop:dimension}. 

If $C=0$, then there are several cases. Clearly, if $\bar I = [0,L]$, then $\gamma$ is planar, as $\tau$ vanishes in $I$. Thus suppose that $\bar I \subsetneq [0,L]$. Let $s_0 \in \partial I \cap (0,L)$.
In case $w'(s_0) \neq 0$, the curve $\gamma$ is planar by Proposition~\ref{prop: planar at the joint}.
Finally if $w'(s_0)=0$, then any non-planar $p$-elastica is a $d$-dimensional flat-core $p$-elastica by Proposition~\ref{prop:flat-core is solution}.

For the regularity, define $M_p:= \lceil \tfrac{2}{p-2} \rceil \geq 1$ and $R_p:=(M_p-\tfrac{2}{p-1})^{-1}$. We prove:
\begin{itemize}
        \item If $\tfrac{2}{p-2}$ is not an integer then  
        $\gamma \in W^{M_p+2,r}(0,L;\R^n)$ for any $r\in [1,R_p)$ but $\gamma \notin W^{M_p+2,R_p}(0,L;\R^n)$.
        \item If $\tfrac{2}{p-2}$ is an integer then $\gamma \in W^{M_p+2,\infty}(0,L;\R^n)$ but $\gamma \notin C^{M_p+2}(0,L;\R^n)$. 
    \end{itemize}
The only case to verify are two flat-core loops $\gamma_b^{\theta_1},\gamma_b^{\theta_2}:[-K_p(1),K_p(1)]\to \R^n$ connected directly, i.e.\ 
$$\gamma := \gamma_b^{\theta_1} \oplus \gamma_b^{\theta_2}:[0,4K_p(1)]\to \R^n,$$
with $\theta_1 \nparallel \theta_2$, as the other cases follow by the planar regularity results in \cite{miuraclassification}.
Since $\gamma_b^{\theta_1}$ extended by a straight line is a planar $p$-elastica, by Definition~\ref{def:flatcore} and $C^{M_p+1}$ regularity in the planar case \cite{miuraclassification}, we have $((\sech_ps)^{p-1})^{(m)}(\pm K_p(1))=0$ for any $m\leq M_p+1$, so $\gamma \in C^{M_p+1}(0,4K_p(1);\R^n)$. 
Combining this fact with the piecewise $W^{M_p+2,r}$ regularity, we have $\gamma \in W^{M_p+2,r}(0,4K_p(1);\R^n)$ for $r\in [1,R_p)$ in case $\tfrac{2}{p-2} \notin \N$ and for $r=\infty$ in case $\tfrac{2}{p-2} \in \N$.
    
    The sharpness follows from the loss of regularity of $\sech_p$, \cite[Proposition~3.13]{miuraclassification} in the case of $\tfrac{2}{p-2}$ not being an integer, analogous to \cite[Theorem~1.9]{miuraclassification}.
    If $\tfrac{2}{p-2}$ is an integer, following \cite[Eq~(B.6)]{miuraclassification}, lower order derivatives of $k$ vanish at $s_0=2K_p(1)$, but $\lim_{s\to s_0^\pm} k^{(M_p)}(s) \neq 0$. 
    Note that for the normal $N(s)$, its limits at $s_0$ from the left and right are not parallel by assumption, and hence
    \begin{equation*}
     \gamma^{(M_p+2)}(s) = (kN)^{(M_p)}(s) = k^{(M_p)}(s)N(s) + \sum_{m=0}^{M_p-1} c_m k^{(m)}(s)N^{(M_p-1-m)}(s),
    \end{equation*}
    is not continuous at $s_0=2K_p(1)$. This finishes the proof.
\end{proof}

Finally, we are able to show Theorem~\ref{thm:EL for k and tau} by using the regularity results of Theorem~\ref{thm:p<=2 case} and Theorem~\ref{thm:p>2 case}.

\begin{proof}[Proof of Theorem~\ref{thm:EL for k and tau}]
Since $\gamma$ is analytic and spatial, the curvature $k$ is strictly positive in some positivity interval $I$. 
From \eqref{eq: EL for tau} we have $k^{2p-2} \tau = C$ in $I$. If $C\neq 0$, the results follow directly from Proposition~\ref{prop:C>0}. 

We now show that the case $C=0$ leads to a contradiction with either the non-planarity or regularity assumption. 
First, if $\bar{ I} = [0,L]$, the curve is planar. Thus suppose that $\bar I \subsetneq [0,L]$ and let $s_0\in \partial I \cap (0,L)$.
If $w'(s_0) \neq 0$, then Proposition~\ref{prop: planar at the joint} implies that $\gamma$ is actually planar.
If $w'(s_0)=0$, 
then by non-planarity, necessarily $p>2$ (see Lemma~\ref{lemma: uniqueness for w}, Proof of Theorem~\ref{thm:p<=2 case}).
Thus $\gamma$ is a flat-core solution by Proposition~\ref{prop:flat-core is solution}, which is not analytic (see Theorem~\ref{thm:p>2 case}). 
\end{proof}

\section{Pinned boundary value problem}
\label{section: pinned}

In this section we show the qualitative classification of pinned $p$-elasticae.
\begin{definition}[Aligned representative]
    \label{def: flatcore representation}
    For $p\in (2,\infty)$ and a flat-core $p$-elastica $\gamma \in W^{2,p}(0,L;\R^n)$, we define its \emph{aligned representative} $\hat{\gamma}:[0,L]\to \R^n$ as follows: If 
    \[
    \gamma(s)=\Lambda A \gamma_f(\Lambda^{-1}(s+s_0))+b
    \]
    with $\Lambda \in (0,\infty)$, $A\in O(n)$, $\tfrac{s_0}{\Lambda} \in [0,2K_p(1)+L_1)$, $b\in \R^n$ and $\gamma_f$ as in \eqref{eq:gamma_f}, then we set
    \[
    \hat{\gamma}(s)=\Lambda A\hat{\gamma}_f(\Lambda^{-1}(s+s_0))+b.
    \]
    The curve $\hat{\gamma}_f$ is derived from $\gamma_f$ by aligning all flat-core loops not containing $\gamma(L)$ with the first loop (i.e.\ taking $\theta_j=\theta_1$).
    Then $\hat{\gamma}$ is at most three dimensional. In case $\gamma''(L)=0$ (i.e.\ it is the endpoint of a loop or inside a straight line segment), we also align the last flat-core loop, resulting in $\hat{\gamma}$ being two dimensional. See also Table~\ref{table: aligned flatcore} for examples.
\end{definition}

\renewcommand{\arraystretch}{1.5} 

\begin{table}[htbp]
    \centering
    \begin{tabular}{lcc} 
        \toprule
        & Original flat-core & Aligned representative \\ 
        \midrule
        $\gamma''(L) \neq 0$ & 
        \begin{minipage}{0.38\textwidth}
            \centering
            \includegraphics[width=\linewidth, trim=410 245 340 140, clip]{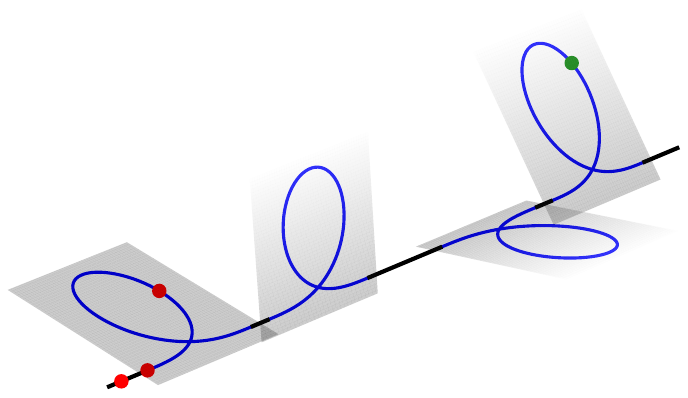}
        \end{minipage} & 
        \begin{minipage}{0.38\textwidth}
            \centering
            \includegraphics[width=\linewidth, trim=410 245 340 140, clip]{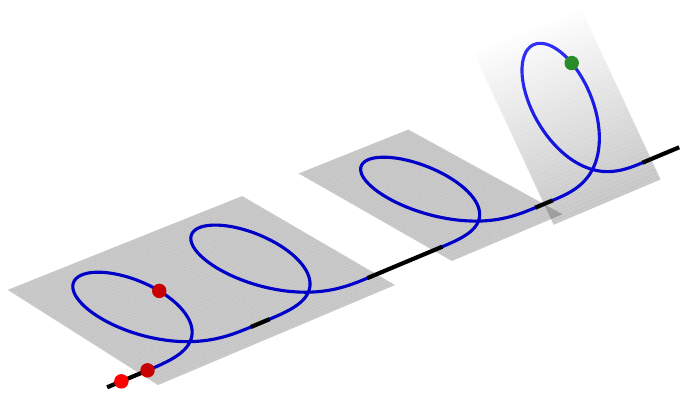}
        \end{minipage} \\
        \cmidrule(lr){1-3} 
        $\gamma''(L)=0$ & 
        \begin{minipage}{0.38\textwidth}
            \centering
            \includegraphics[width=\linewidth, trim=410 245 340 140, clip]{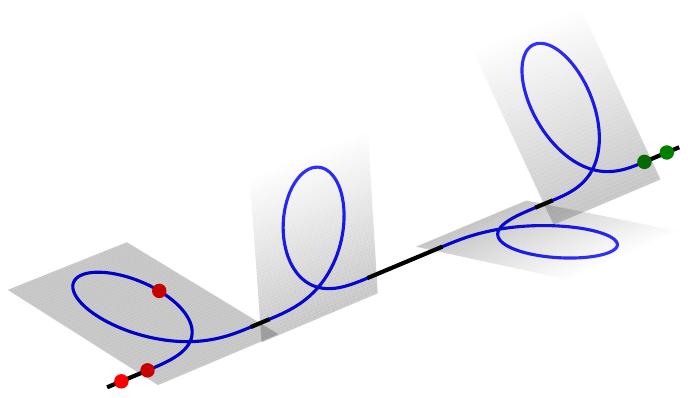}
        \end{minipage} & 
        \begin{minipage}{0.38\textwidth}
            \centering
            \includegraphics[width=\linewidth, trim=410 245 340 140, clip]{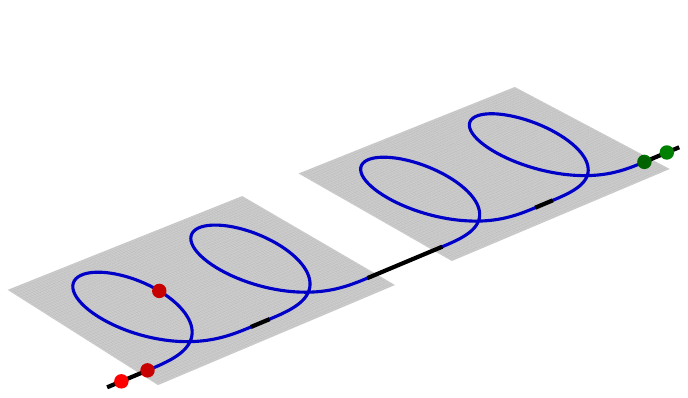}
        \end{minipage} \\
        \bottomrule
    \end{tabular}
    \caption{Aligned representative of flat-core $p$-elasticae for $p=4$. Typical examples of possible left endpoints $\gamma(0)$ are marked in red, of possible right endpoints $\gamma(L)$ in green.} 
    \label{table: aligned flatcore}
\end{table}

\begin{proof}[Proof of Theorem~\ref{thm: pinned classification}]
Let $\gamma \in \A_{P_0,P_1,L} \subset W^{2,p}(0,L;\R^n)$ be a pinned $p$-elastica.
Suppose on the contrary that $k(0)\neq 0$. 
Then by continuity (Corollary~\ref{cor: gamma is C^2 and a bit more}), $k\neq 0$ on $[0,\delta)$ for some small $\delta >0$.
Hence \eqref{eq:ELpointwise_complete} holds in $(0,\delta)$ pointwise. 
By testing \eqref{eq:first variation} against $\eta \in C^\infty_0(0,L;\R^n)$ with $\spt \eta \subset [0,\delta]$ and first-order conditions $\eta'(0) = -\gamma''(0)$ and $\eta'(\delta) = 0$ we obtain,
\begin{equation*}
\begin{split}
    0 &= \int_0^\delta (1-2p) |\gamma''|^p \langle \gamma', \eta' \rangle + p |\gamma''|^{p-2} \langle \gamma'', \eta '' \rangle + \lambda \langle \gamma', \eta'\rangle \\
    &= \int_0^\delta \langle  p (|\gamma''|^{p-2}\gamma'')'' - (1-2p)(|\gamma''|^p\gamma')' + \lambda \gamma'', \eta \rangle ds + \big[p|\gamma''|^{p-2} \langle \gamma'', \eta'\rangle \big]_0^\delta\\
    &=  p|\gamma''(0)|^{p}.
\end{split}
\end{equation*}
The integral term vanishes due to $\gamma$ being a $p$-elastica.
Hence we deduce $k(0)=0$, contrary to the assumption. Analogously, we have $k(L)=0$.

The only possibility for $\gamma$ to be non-planar is $\gamma$ being a non-trivial flat-core $p$-elastica, otherwise contradicting Theorems~\ref{thm:p<=2 case}, \ref{thm:p>2 case} and \ref{thm:EL for k and tau}.
For $p\leq2$, flat-core $p$-elasticae do not exist, and thus $\gamma$ is planar. Let $p>2$ and suppose that $\gamma$ is a flat-core $p$-elastica.
Then its aligned representative $\hat \gamma$ is a two dimensional pinned $p$-elastica in the sense of \cite[Definition~3.2]{miura_pinned_p}.
By \cite[Theorem~1.1]{miura_pinned_p}, necessarily $|P_0-P_1|\geq \tfrac{L}{p-1}$. 
\end{proof}

\begin{proof}[Proof of Corollary~\ref{cor: unique pinned minimizer}]
    The existence of a minimizer $\gamma$ follows from a standard direct method argument as in \cite[Proposition~4.1]{miura_pinned_p}. For uniqueness, thanks to \cite[Theorem~1.4]{miura_pinned_p}, we only need to prove the planarity of a minimizer $\gamma$. If otherwise ($\gamma$ being non-planar), then its aligned representative $\hat{\gamma}$ would give a planar flat-core of the same energy, which would also be a (planar) minimizer, but this contradicts \cite[Theorem~1.4]{miura_pinned_p}.
\end{proof}

\section{Applications}
\label{section: applications}

We utilize the previously derived regularity and structure results to generalize a Li--Yau type inequality. 
We also characterize the solution curves that lead to equality and discuss applications to minimal $p$-elastic networks and $p$-elastic flows.

\subsection{Li--Yau inequality}
For $p\in (1,\infty)$ we define  $q_p^* \in (0,1)$ as the unique real solution of $2\tfrac{E_{1,p}(q)}{K_{1,p}(q)}=1$, where $E_{1,p}$ and $K_{1,p}$ are defined as in \cite[Definition~3.2]{miuraclassification}.
Moreover, define the constant (as in \cite[Equation~(5.1)]{miura_pinned_p})
\begin{equation}
\label{eq: varpi constant}
    \varpi_p^*:= 2^{3p-1} (q_p^*)^{p-2} (2 (q_p^*)^2-1)E_{1,p}(q_p^*)^p.
\end{equation}

\begin{definition}
\label{def: n fold figure-eight}
Let $p\in (1,\infty)$. 
A curve $\gamma$ is called an \emph{$\tfrac{N}{2}$-fold figure-eight $p$-elastica}, if, up to similarity, it is given by a wavelike (see \cite{miuraclassification}) $p$-elastica with parameter $q_p^*$, i.e.\ for $s \in [0,2N K_{1,p}(q_p^*)]$, 
\begin{equation*}
  \gamma(s) =
    (2 E_{1,p}(\am_{1,p}(s,q_p^*),q_p^*)-s) e_1   
    -q_p^* \tfrac{p}{p-1}|\cn_p(s,q_p^*)|^{p-2} \cn_p(s,q_p^*) e_2.
\end{equation*}
By \cite[Proposition~4.3]{miura_pinned_p}, the normalized $p$-bending energy of a $\tfrac{1}{2}$-fold figure-eight $p$-elastica is given by $\varpi_p^*$.
\end{definition}

\begin{definition}
\label{def: crossing angle}
    For $p\in (1,\infty)$, we define the \emph{crossing angle}
    \begin{equation*}
        \phi^*(p):= \pi -2 \arcsin(q^*_p).
    \end{equation*}
    Note that by \cite[Proposition 4.6 (iii)]{miura_pinned_p}, the value $2\phi^*(p)$ is the angle between the tangent vectors at the two endpoints of a $\tfrac{1}{2}$-fold figure-eight $p$-elastica. 
    Moreover, by \cite[Theorem~1.5]{miura_pinned_p}, the function $ p\mapsto \phi^*(p)$ from $(1,\infty)$ to $(0,\pi/2)$ is continuous, surjective and strictly decreasing.
\end{definition}

We first show a key building block of Theorem~\ref{thm: p Li-Yau}, essentially the ``multiplicity one'' case.

\begin{proposition}
\label{prop: one leaf Li-Yau}
    Let $p\in (1,\infty)$ and $\gamma \in W^{2,p}(0,1;\R^n)$ be an immersed curve such that $\gamma(0)=\gamma(1)$. 
    Then
    \begin{equation*}
        \bar \B_p[\gamma] \geq \varpi_p^*,
    \end{equation*}
    with equality if and only if $\gamma$ is a $\tfrac{1}{2}$-fold figure-eight $p$-elastica.
\end{proposition}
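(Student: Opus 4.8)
The plan is to exploit the invariances of $\bar\B_p$ to recast the statement as the pinned minimization problem with coinciding endpoints, which our classification has already resolved. Since $\bar\B_p[\gamma]=\Ll[\gamma]^{p-1}\B_p[\gamma]$ is invariant under reparameterization and dilation, after reparameterizing $\gamma$ by arclength and rescaling we may assume $\Ll[\gamma]=1$; then $\bar\B_p[\gamma]=\B_p[\gamma]$. Writing $P_0:=\gamma(0)$, the hypothesis $\gamma(0)=\gamma(1)$ says precisely that $\gamma\in\A_{P_0,P_0,1}$, the admissible class of the pinned problem with $P_1=P_0$ and $L=1$; note $|P_0-P_1|=0<1=L$, so the pinned theory applies. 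It therefore suffices to locate the minimum of $\B_p$ over $\A_{P_0,P_0,1}$.

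I would then apply Corollary~\ref{cor: unique pinned minimizer} with $P_1=P_0$: it provides a unique (up to isometry) global minimizer $\gamma_*$ of $\B_p$ over $\A_{P_0,P_0,1}$ and, crucially, asserts that $\gamma_*$ is planar. This planarity is exactly the step where the ambient dimension $n$ drops out: $\gamma_*$ lies in an affine plane and is thus a planar pinned $p$-elastica with coinciding endpoints. By the planar theory of \cite{miura_pinned_p}---\cite[Theorem~1.4]{miura_pinned_p} for the shape of the minimizer together with \cite[Proposition~4.3]{miura_pinned_p} for its normalized energy---such a curve is, up to similarity, the $\tfrac12$-fold figure-eight $p$-elastica of Definition~\ref{def: n fold figure-eight}, with $\bar\B_p=\varpi_p^*$. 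Hence $\B_p[\gamma_*]=\varpi_p^*$.

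Combining the two steps yields, for the normalized $\gamma$,
\[
\bar\B_p[\gamma]=\B_p[\gamma]\ \geq\ \min_{\A_{P_0,P_0,1}}\B_p=\B_p[\gamma_*]=\varpi_p^*,
\]
which is the desired inequality. For the equality case, if $\bar\B_p[\gamma]=\varpi_p^*$ then, after the above normalization, $\gamma$ attains the minimum over $\A_{P_0,P_0,1}$, so by the uniqueness part of Corollary~\ref{cor: unique pinned minimizer} it is congruent to $\gamma_*$, i.e.\ a $\tfrac12$-fold figure-eight $p$-elastica; conversely such a curve realizes $\varpi_p^*$ by \cite[Proposition~4.3]{miura_pinned_p}.

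The genuinely hard content sits in the results being invoked rather than in this assembly: the planarity of the pinned minimizer in Corollary~\ref{cor: unique pinned minimizer} rests on the regularity and dimensional-rigidity theory of Sections~\ref{section:preliminaries}--\ref{section:main} and on the pinned classification Theorem~\ref{thm: pinned classification}. The only delicate bookkeeping I foresee is matching the parameterization of Definition~\ref{def: n fold figure-eight} with the convex-arc minimizer of \cite[Theorem~1.4]{miura_pinned_p} in the degenerate case $P_0=P_1$, up to the admissible similarity and interval shift, and confirming via \cite[Proposition~4.3]{miura_pinned_p} that its normalized energy is exactly $\varpi_p^*$.
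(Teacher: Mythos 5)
Your proposal is correct and follows essentially the same route as the paper: normalize to an arclength-parameterized curve of unit length, view it as an element of $\A_{P_0,P_0,1}$, invoke Corollary~\ref{cor: unique pinned minimizer} to get a unique planar global minimizer, and then conclude via the planar results of \cite{miura_pinned_p}. The only difference is cosmetic — the paper cites \cite[Corollary~5.1]{miura_pinned_p} directly for the planar one-leaf inequality and its equality case, whereas you unpack it through \cite[Theorem~1.4]{miura_pinned_p} and \cite[Proposition~4.3]{miura_pinned_p}.
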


\begin{proof}
Up to reparameterization and scale invariance, we may assume $\gamma$ to be arclength parameterized and $\Ll[\gamma]=1$.
Then, by Corollary~\ref{cor: unique pinned minimizer} with $P_0=P_1$, there exists a unique minimizer, which is planar.
Hence the assertion follows by the known planar result, \cite[Corollary 5.1]{miura_pinned_p}.
\end{proof}

We now define a special class of curves, characteristic of the equality case in Theorem~\ref{thm: p Li-Yau}.

\begin{definition}
\label{def: leafed elastica}
For $p\in (1,\infty)$, we call an immersed $W^{2,p}$-curve $\gamma:[0,1]\to \R^n$ an \emph{(open) $m$-leafed $p$-elastica} if there are $0 = a_1 < a_2 < \dots < a_{m+1} = 1$ such that for each $i = 1,\dots, m$ the curve $\gamma_i := \gamma|_{[a_i ,a_{i+1}]}$ is a $\tfrac{1}{2}$-fold figure-eight $p$-elastica and $\Ll[\gamma_1] = \dots = \Ll[\gamma_m]$. 
The point $\gamma(a_1) = \dots =\gamma(a_{m+1})$ is called the \emph{joint}.

Similarly, we call a closed immersed $W^{2,p}$-curve $\gamma:\R / \Z \to \R^n$ a \emph{closed $m$-leafed $p$-elastica} if there is $t_0 \in \R / \Z$ such that the curve $\tilde \gamma: [0, 1] \to \R^n$ defined by $\tilde \gamma(t) := \gamma (t + t_0 )$ is an open $m$-leafed $p$-elastica.
\end{definition}

 \begin{remark}
     We note that any closed $m$-leafed $p$-elastica $\gamma$ is of class $C^2(\R /\Z;\R^n)$. 
     The continuity of the first derivative follows directly from the classical embedding $W^{2,p}(\R /\Z;\R^n) \hookrightarrow C^1(\R /\Z;\R^n)$ and since $k$ vanishes at the joint by definition, the second derivative is continuous as well. 
     Moreover, its normalized bending energy is calculated as $\bar \B_p[\gamma]=\varpi_p^* m^p$.
 \end{remark}

\begin{remark}
    Note also that a closed $m$-leafed $p$-elastica does not necessarily have $m$-fold rotational symmetry, since it is always possible to concatenate an $m'$-leafed $p$-elastica with $\tfrac{m-m'}{2}$ figure-eight $p$-elasticae, see Figure~\ref{fig:5impureplot}.
\end{remark}

\begin{figure}[htbp]
        \centering
        \includegraphics[width=60mm]{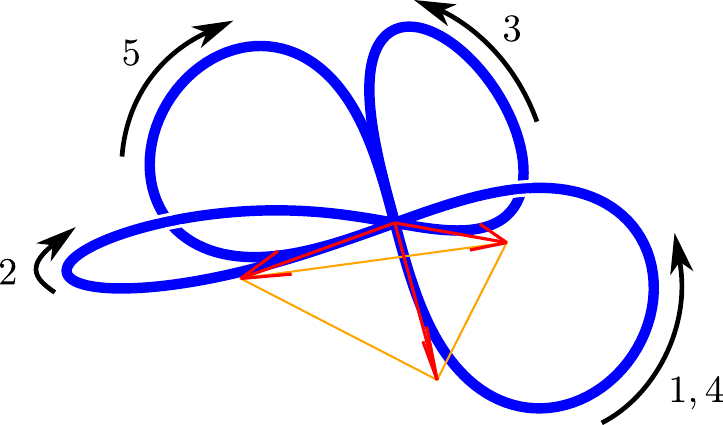}
        \caption{A $5$-leafed elastica constructed from a $3$-leafed elastica concatenated with a figure-eight elastica. Starting from the point of multiplicity $m=5$ at the origin, the loops are traversed in their numerical order.}
        \label{fig:5impureplot}
\end{figure}

Closed leafed $p$-elasticae induce a first-order condition of the corresponding open leafed $p$-elasticae at the endpoints. 
Namely the tangent vectors at the endpoints have to agree, otherwise violating the $C^1$ continuity. 
This leads naturally to conditions on the crossing angle $\phi^*(p)$, and we have the following equivalence of $m$-leafed $p$-elasticae (see also \cite[Lemma~3.7]{miura_LiYau}, \cite[Lemma~5.7]{miura_pinned_p}). 

\begin{lemma}
\label{lemma: characterization tuple}
\textnormal{(Characterization of closed $m$-leafed $p$-elasticae)}
Let $m\geq 1$ be an integer and $p\in (1,\infty)$. 
Let $\Omega^*(m,p,n)$ be the set of all $m$-tuples $(\omega_1,\dots, \omega_m)$ of $n$-dimensional unit-vectors $\omega_1,\dots,\omega_m$ in $\R^n$, such that $\langle \omega_{i+1},\omega_i \rangle = \cos 2 \phi^*(p) $ for any $i=1,\dots, m$, where we interpret $\omega_{m+1}=\omega_1$.
Then for any $m$-tuple $(\omega_1,\dots, \omega_m)\in \Omega^*(m,p,n)$, there exists a unique arclength parameterized closed $m$-leafed $p$-elastica $\gamma \in W^{2,p}(\R / \Z; \R^n)$ such that $\gamma(0)=0$ and $\gamma'\left( \tfrac{i}{m} \right)=\omega_i$ for $i=1,\dots ,m$. Conversely, for any closed $m$-leafed $p$-elastica $\gamma \in W^{2,p}(\R / \Z ;\R^n)$, there exists an $m$-tuple $(\omega_1,\dots, \omega_m)\in \Omega^*(m,p,n)$ where $\omega_i$ is given by $\gamma'\left( \tfrac{i}{m} + t_0\right)$ for some $t_0 \in \R/\Z$.
\end{lemma}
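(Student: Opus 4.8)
The plan is to exhibit an explicit bijection between closed $m$-leafed $p$-elasticae (with the prescribed normalization) and tuples in $\Omega^*(m,p,n)$, by realizing each leaf as an isometric copy of one fixed model loop and reading the angle constraint off the crossing angle $\phi^*(p)$. Throughout I would use that each leaf is a $\tfrac12$-fold figure-eight, i.e.\ a single loop returning to the joint, whose two endpoint tangents enclose the angle $2\phi^*(p)$ (Definition~\ref{def: crossing angle}) and whose curvature vanishes at both endpoints, so that neighbouring leaves glue in a $C^2$ fashion. Note first that, since $\phi^*(p)\in(0,\pi/2)$, we have $\cos 2\phi^*(p)\in(-1,1)$, so any two consecutive $\omega$'s are automatically linearly independent.

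For the necessity part I would start from a closed $m$-leafed $p$-elastica $\gamma$ and pick $t_0$ so that $\tilde\gamma:=\gamma(\cdot+t_0)$ is an open $m$-leafed $p$-elastica. Since all $m$ leaves have equal length and $\tilde\gamma$ is arclength parameterized on $[0,1]$, each leaf has length $\tfrac1m$, hence $a_i=\tfrac{i-1}{m}$, and I set $\omega_i:=\tilde\gamma'(\tfrac im)=\gamma'(\tfrac im+t_0)$. The leaf $\tilde\gamma|_{[(i-1)/m,\,i/m]}$ is a $\tfrac12$-fold figure-eight with outgoing tangent $\omega_{i-1}$ and incoming tangent $\omega_i$ (with $\omega_0:=\omega_m$), so the crossing-angle property yields $\langle\omega_{i-1},\omega_i\rangle=\cos 2\phi^*(p)$; combined with the $C^1$-closedness $\omega_m=\tilde\gamma'(1)=\tilde\gamma'(0)=\omega_0$, this places $(\omega_1,\dots,\omega_m)$ in $\Omega^*(m,p,n)$.

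For existence I would fix the model $\tfrac12$-fold figure-eight loop $F\colon[0,\tfrac1m]\to\R^2\subset\R^n$, scaled to length $\tfrac1m$ and translated so that $F(0)=F(\tfrac1m)=0$, and write $v_0:=F'(0)$, $v_1:=F'(\tfrac1m)$, so $\langle v_0,v_1\rangle=\cos 2\phi^*(p)$. Given $(\omega_1,\dots,\omega_m)\in\Omega^*$ with $\omega_0:=\omega_m$, the ordered pairs $(v_0,v_1)$ and $(\omega_{i-1},\omega_i)$ share the same Gram matrix, so there exists $R_i\in O(n)$ with $R_iv_0=\omega_{i-1}$ and $R_iv_1=\omega_i$. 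I would then define the $i$-th leaf by $s\mapsto R_iF\big(s-\tfrac{i-1}{m}\big)$ on $[\tfrac{i-1}{m},\tfrac im]$ and concatenate. As each $R_i$ fixes the origin, every leaf begins and ends at the joint $0$; the incoming tangent of leaf $i$ and the outgoing tangent of leaf $i+1$ both equal $\omega_i$, and for $i=m$ both equal $\omega_m=\omega_0$, so the resulting curve $\gamma$ is closed and $C^1$ with $\gamma(0)=0$ and $\gamma'(\tfrac im)=\omega_i$. Since the curvature of $F$ vanishes at its endpoints, $\gamma''$ is continuous across every joint, whence $\gamma\in W^{2,p}(\R/\Z;\R^n)$ is a closed $m$-leafed $p$-elastica.

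The uniqueness, and the main obstacle, lie in a rigidity statement for a single leaf. Any admissible $\gamma$ decomposes into leaves $\gamma_i$ on $[\tfrac{i-1}{m},\tfrac im]$, each a $\tfrac12$-fold figure-eight of length $\tfrac1m$ with $\gamma_i(\tfrac{i-1}{m})=0$, outgoing tangent $\omega_{i-1}$ and incoming tangent $\omega_i$. Because such a leaf is planar and unique up to similarity, it must lie in the plane $\Span\{\omega_{i-1},\omega_i\}$ (here the linear independence coming from $\cos 2\phi^*(p)\neq\pm1$ is essential); within this plane the normalization by length, joint and outgoing tangent leaves only the reflection across $\R\omega_{i-1}$ as freedom, and that reflection sends the incoming tangent to its mirror image. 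Requiring the incoming tangent to equal $\omega_i$ then pins down a unique leaf, so $\gamma$ is uniquely determined. The delicate point I expect is exactly this: verifying that fixing length, joint, and the two endpoint tangents removes all isometric ambiguity of each figure-eight loop; the existence half, by comparison, is routine once one invokes the elementary fact that two ordered pairs of unit vectors with equal Gram matrix differ by an element of $O(n)$.
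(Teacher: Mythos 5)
The paper does not actually prove Lemma~\ref{lemma: characterization tuple}; it states it and defers to the analogous results in \cite{miura_LiYau} and \cite{miura_pinned_p}. Your argument is correct and is essentially the standard one behind those cited lemmas: necessity from the crossing-angle property of a single leaf, existence by gluing isometric copies $R_iF$ of a model leaf using that two ordered pairs of unit vectors with equal Gram matrix differ by an element of $O(n)$, and uniqueness from the rigidity of each leaf once its plane $\Span\{\omega_{i-1},\omega_i\}$, joint, length and both endpoint tangents are fixed (the residual reflection across $\R\omega_{i-1}$ being excluded because $\cos 2\phi^*(p)\neq\pm1$). The only point worth making explicit is that in the converse direction you tacitly use the constant-speed parameterization to place the leaf endpoints at $a_i=\tfrac{i-1}{m}$, which is indeed what the lemma's normalization $\omega_i=\gamma'(\tfrac{i}{m}+t_0)$ presupposes.
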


We are now able to prove Theorem~\ref{thm: p Li-Yau} by a simple partition argument.

\begin{proof}[Proof of Theorem~\ref{thm: p Li-Yau}]
    Without loss of generality, assume that the point of multiplicity $m$ is the origin. 
    Cut the curve $\gamma \in W^{2,p}(\R/\Z;\R^n)$ at the point of multiplicity and define an immersed open curve $\tilde \gamma:[0,1] \to \R^n$ with $ \tilde \gamma(0)= \tilde \gamma(1)=0$. 
    Since $\tilde \gamma$ has multiplicity $m+1$, there exist $0=a_1 < a_2 < \dots < a_m<a_{m+1}=0$ such that $\tilde \gamma(a_i)=0$ for any $i=1,\dots,m+1$. 
    Set $\tilde \gamma_i = \tilde \gamma|_{[a_i,a_{i+1}]}$ for $i=1,\dots,m$, from Proposition~\ref{prop: one leaf Li-Yau} we obtain 
    \begin{equation*}
     \Ll_p[\tilde \gamma_i]^{p-1}   \B_p[\tilde \gamma_i]  = \bar \B_p[\tilde \gamma_i] \geq \varpi_p^*.
    \end{equation*}
    It then follows that
    \begin{equation}
    \label{eq:Li-Yau sum inequality}
        \bar \B_p[\gamma]=\bar \B_p[\tilde \gamma] = \left( \sum_{i=1}^m \Ll[\tilde\gamma_i] \right)^{p-1} \sum_{i=1}^m \B_p[\tilde \gamma_i] \geq \varpi_p^* \left( \sum_{i=1}^m \Ll[\tilde \gamma_i] \right)^{p-1} \sum_{i=1}^m \left( \frac{1}{\Ll[\tilde \gamma_i]} \right)^{p-1}.
    \end{equation}
    If $p>2$, we use Jensen's inequality for the second sum on the RHS, whereas if $p\leq 2$, the reverse of Jensen's inequality for the first sum and then in each case the HM-AM inequality (parallel to \cite[Theorem 5.2]{miura_pinned_p}).
    Thus we arrive at
    \begin{equation*}
        \begin{split}
        \bar \B_p[\gamma] \geq \varpi_p^* m^p,
        \end{split}
    \end{equation*}
    as we wanted to show.

Now we discuss the equality case.
    Let $\gamma$ be a closed $m$-leafed $p$-elastica in the sense of Definition~\ref{def: leafed elastica}, where each leaf $\gamma_i$ has length $L$. 
    Then by Proposition~\ref{prop: one leaf Li-Yau},
    \begin{equation}
        \B_p [\gamma] = m \B_p[\gamma_i] = m L^{1-p} \varpi_p^*,
    \end{equation}
    and hence $\bar \B_p[\gamma] = (mL)^{p-1} \B_p[\gamma] = m^p \varpi_p^*$. 
    Conversely, suppose that $\gamma$ attains equality in \eqref{eq:p Li-Yau}, and split $\gamma$ at its point of multiplicity into open curves $\tilde \gamma_i:[a_i,a_{i+1}] \to \R^n$ with $\tilde \gamma(a_i)= \tilde \gamma(a_{i+1})$ for $i=1,\dots,m$. 
    Then each term in \eqref{eq:Li-Yau sum inequality} attains equality, i.e.\ $\B_p[\tilde \gamma_i]=\varpi_p^* \Ll[\tilde \gamma_i]^{1-p}$ and $\tilde\gamma_i$ is thereby given by a $\tfrac{1}{2}$-fold figure-eight $p$-elastica thanks to Proposition~\ref{prop: one leaf Li-Yau}. 
    The equalities $\Ll[\tilde \gamma_1]=\dots = \Ll[\tilde \gamma_m]$ follow from equality in the HM-AM and Jensen's inequality. 
\end{proof}

From the proof we have the immediate consequence.
\begin{corollary}
\label{cor: p Li-Yau for open curves}
    Let $p\in (1,\infty)$ and $\gamma \in W^{2,p}(0,1;\R^n)$ be an immersed curve with  multiplicity $m+1 \in \N_{\geq 3}$ and $\gamma(0)=\gamma(1)$. Then $\bar\B_p[\gamma] \geq \varpi_p^* m^p$ with equality if and only if $\gamma$ is an open $m$-leafed $p$-elastica.
\end{corollary}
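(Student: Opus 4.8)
The plan is to observe that the asserted inequality for the open curve $\gamma$ is exactly the intermediate estimate already carried out within the proof of Theorem~\ref{thm: p Li-Yau}, now read for $\gamma$ itself rather than for the cut curve $\tilde\gamma$ appearing there. First I would exploit the multiplicity hypothesis: since $\gamma^{-1}(\gamma(0))$ consists of $m+1$ distinct points and the two endpoints $0,1$ are among them (as $\gamma(0)=\gamma(1)$), there exist $0=a_1<a_2<\dots<a_{m+1}=1$ with $\gamma(a_i)=\gamma(0)$ for all $i$. Setting $\gamma_i:=\gamma|_{[a_i,a_{i+1}]}$ for $i=1,\dots,m$ then produces $m$ immersed subarcs, each satisfying $\gamma_i(a_i)=\gamma_i(a_{i+1})$, i.e.\ each is an immersed arc with coinciding endpoints.

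Next I would apply Proposition~\ref{prop: one leaf Li-Yau} to each subarc, after the standard reparameterization and dilation to unit length permitted by the scale- and reparameterization-invariance of $\bar\B_p$, obtaining $\bar\B_p[\gamma_i]=\Ll[\gamma_i]^{p-1}\B_p[\gamma_i]\geq\varpi_p^*$. From here the computation is verbatim that of \eqref{eq:Li-Yau sum inequality}: writing $\bar\B_p[\gamma]=\big(\sum_i\Ll[\gamma_i]\big)^{p-1}\sum_i\B_p[\gamma_i]$ and bounding each $\B_p[\gamma_i]\geq\varpi_p^*\Ll[\gamma_i]^{1-p}$, one reduces the problem to $\varpi_p^*\big(\sum_i\Ll[\gamma_i]\big)^{p-1}\sum_i\Ll[\gamma_i]^{1-p}$, to which Jensen's inequality (for $p>2$), or its reverse (for $p\leq 2$), together with the HM--AM inequality, applies to yield $\bar\B_p[\gamma]\geq\varpi_p^* m^p$.

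For the equality case I would trace each inequality backwards. Equality forces $\bar\B_p[\gamma_i]=\varpi_p^*$ for every $i$, so each $\gamma_i$ is a $\tfrac12$-fold figure-eight $p$-elastica by the rigidity part of Proposition~\ref{prop: one leaf Li-Yau}; equality in the HM--AM and Jensen steps forces $\Ll[\gamma_1]=\dots=\Ll[\gamma_m]$. By Definition~\ref{def: leafed elastica} this is precisely the statement that $\gamma$ is an open $m$-leafed $p$-elastica. Conversely, if $\gamma$ is an open $m$-leafed $p$-elastica with each leaf of length $L$, the direct computation $\bar\B_p[\gamma]=(mL)^{p-1}\cdot mL^{1-p}\varpi_p^*=\varpi_p^* m^p$ shows equality is attained.

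Since every analytic ingredient (the single-leaf bound and its rigidity in Proposition~\ref{prop: one leaf Li-Yau}, and the two convexity inequalities) is already in place, there is no genuine obstacle here; the only point requiring mild care is the bookkeeping of the multiplicity, namely verifying that the two endpoints together with the $m-1$ interior preimages account for exactly the $m+1$ points of $\gamma^{-1}(\gamma(0))$ and hence produce exactly $m$ subarcs.
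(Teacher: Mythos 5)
Your proposal is correct and is essentially the paper's own argument: the paper derives this corollary directly from the proof of Theorem~\ref{thm: p Li-Yau}, which performs exactly your decomposition of the open curve into $m$ subarcs at the preimages of the multiple point, applies Proposition~\ref{prop: one leaf Li-Yau} to each, and concludes via Jensen's and the HM--AM inequalities, with the equality case traced back in the same way. Your added care about the endpoint bookkeeping (that $0$ and $1$ both lie in $\gamma^{-1}(\gamma(0))$, so the $m+1$ preimages yield exactly $m$ subarcs) is accurate and consistent with the paper.
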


Finally, we characterize the triples $(p,m,n)$ attaining equality in \eqref{eq:p Li-Yau}. See Table~\ref{table: m-leafed elasticae} for a selection of such curves.

\begin{theorem}
\label{thm: Li-Yau equality}
For any odd integer $m\geq 3$, there exists a finite set $P_m\subset(1,\infty)$ with the following properties: Let $p_m^*=\min P_m\in(1,\infty)$.
\begin{enumerate}
    \item If $p\in (1,p_m^*)$, then for any $n\geq 2$ there does not exist a closed $m$-leafed $p$-elastica in $\R^n$.
    \item If $p\in P_m$, then there exists a closed $m$-leafed $p$-elastica in $\R^2$.
    \item If $p\in(p_m^*,\infty)\setminus P_m$, then there exists a closed $m$-leafed $p$-elastica in $\R^3$, but not in $\R^2$.
\end{enumerate}
In addition, we have $|P_m|=\tfrac{(m-1)(m+1)}{8}$, the strict inclusion $P_m\subsetneq P_{m+2}$, strict monotonicity $p_{m+2}^*<p_m^*$ and also $p_m^*\to1$ as $m\to\infty$.
\end{theorem}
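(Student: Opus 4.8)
The plan is to translate the entire statement into a purely geometric question about closed tangent configurations and then exploit the monotone bijection $p\mapsto\phi^*(p)$ from Definition~\ref{def: crossing angle}. By Lemma~\ref{lemma: characterization tuple}, a closed $m$-leafed $p$-elastica exists in $\R^n$ if and only if $\Omega^*(m,p,n)\neq\varnothing$, i.e.\ there are unit vectors $\omega_1,\dots,\omega_m\in\R^n$ (cyclically, with $\omega_{m+1}=\omega_1$) satisfying $\langle\omega_{i+1},\omega_i\rangle=\cos\alpha$ for all $i$, where $\alpha:=2\phi^*(p)\in(0,\pi)$. First I would record that, since $p\mapsto\phi^*(p)$ is a continuous strictly decreasing bijection from $(1,\infty)$ onto $(0,\pi/2)$, every threshold in $p$ corresponds to a threshold in $\alpha$, with small $p$ giving $\alpha$ near $\pi$ and large $p$ giving $\alpha$ near $0$. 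The whole theorem then reduces to: (a) a sharp dimension-free obstruction to closing up such a chain, and (b) an explicit spatial construction matching it.

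The crux is a sharp lower bound for the cyclic correlation of unit vectors: for any $\omega_1,\dots,\omega_m$ in a Euclidean space,
\[
\sum_{i=1}^m\langle\omega_i,\omega_{i+1}\rangle \;\ge\; -m\cos(\pi/m),
\]
with equality, for odd $m$, only in the planar regular star configuration. Writing the Gram matrix $G=(\langle\omega_i,\omega_j\rangle)$, the left-hand side equals $\tfrac12\operatorname{tr}(AG)$, where $A$ is the adjacency matrix of the $m$-cycle; since $A\succeq\lambda_{\min}(A)I$ and $G\succeq0$ has unit diagonal, one gets $\operatorname{tr}(AG)\ge\lambda_{\min}(A)\operatorname{tr}(G)=m\,\lambda_{\min}(A)$, and for odd $m$ the eigenvalues $2\cos(2\pi j/m)$ are minimized at $j=(m-1)/2$, giving $\lambda_{\min}(A)=2\cos(\pi(m-1)/m)=-2\cos(\pi/m)$. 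Equality forces $G$ to be supported on the two-dimensional minimal eigenspace spanned by $(\cos i\theta)_i$ and $(\sin i\theta)_i$ with $\theta=\pi(m-1)/m$, hence the configuration is planar. Consequently, if all consecutive inner products equal $\cos\alpha$, then $\cos\alpha\ge-\cos(\pi/m)$, i.e.\ $\alpha\le\pi-\pi/m$. This already yields nonexistence \emph{in every dimension} for $\alpha>\pi-\pi/m$, which after translating $\alpha>\pi-\pi/m\Leftrightarrow\phi^*(p)>\pi(m-1)/(2m)\Leftrightarrow p<p_m^*$ is exactly item (i).

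For the converse I would give an explicit construction on $\bS^2$: place $\omega_i$ on a circle of colatitude $\psi$ with azimuth $i\beta$, $\beta=\pi(m-1)/m$; these close up because $m\beta\in2\pi\Z$, and $\langle\omega_i,\omega_{i+1}\rangle=\cos^2\psi+\sin^2\psi\cos\beta$ sweeps the interval $(-\cos(\pi/m),1)$ as $\psi$ runs over $(0,\tfrac\pi2)$, producing a genuinely three-dimensional closed chain for every $\alpha\in(0,\pi-\pi/m)$ and hence the $\R^3$-existence in (iii). In the planar case $n=2$, a cyclic chain on $\bS^1$ exists iff the net rotation $k\alpha$ is a multiple of $2\pi$ for some admissible signed leaf-count $k$ (odd, $|k|\le m$), i.e.\ iff $\phi^*(p)=\pi\ell/k$ for some odd $k\le m$ and $1\le\ell\le(k-1)/2$; this defines the finite set $P_m$. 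Its largest element is $\pi(m-1)/(2m)$ (from $k=m$, $\ell=(m-1)/2$), which coincides with the spatial threshold and is itself planar, so $p_m^*=\min P_m\in P_m$, yielding (ii) and finishing (iii). Enumerating the admissible winding data gives $|P_m|=\sum_{t=1}^{(m-1)/2}t=\tfrac{(m-1)(m+1)}{8}$, while the strict monotonicity $p_{m+2}^*<p_m^*$, the strict inclusion $P_m\subsetneq P_{m+2}$, and $p_m^*\to1$ all follow at once from $\phi^*(p_m^*)=\pi(m-1)/(2m)\nearrow\pi/2$ together with the monotone bijection $\phi^*$.

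The hard part will be the sharp inequality and, above all, the rigidity in its equality case: it is this that pins the threshold exactly at $\alpha=\pi-\pi/m$ and forces planarity there, so that spatial non-planar solutions appear precisely for $p>p_m^*$. The semidefinite/eigenvalue argument makes the bound transparent, but correctly identifying the minimal eigenspace of the odd cycle and excluding any higher-rank equality configuration needs care. A secondary technical point is the exact cardinality computation, where one must keep careful track of coincidences among the rational values $\pi\ell/k$ (the same phenomenon underlying the transcendence obstruction noted for $p=2$) to confirm the closed-form count $\tfrac{(m-1)(m+1)}{8}$.
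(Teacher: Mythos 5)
Your proposal is correct and reaches all the conclusions of Theorem~\ref{thm: Li-Yau equality}, but it argues the key nonexistence step by a genuinely different method. Both you and the paper reduce everything, via Lemma~\ref{lemma: characterization tuple} and the strictly decreasing bijection $p\mapsto\phi^*(p)$, to closing up a cyclic chain of unit vectors with prescribed consecutive angle $\alpha=2\phi^*(p)$; your existence constructions (equidistributed points on a latitude circle of $\bS^2$ for part (iii), rotations in $\bS^1$ for part (ii)) and the planar winding-number obstruction then essentially coincide with the paper's. For the dimension-free obstruction in part (i), however, the paper iterates the spherical triangle inequality: each step confines the next tangent to a geodesic ball of radius $i\pi/m$ centered alternately at the two poles, and odd parity prevents the chain from returning to its starting point. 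You instead prove the sharp bound $\sum_i\langle\omega_i,\omega_{i+1}\rangle\geq -m\cos(\pi/m)$ via $\operatorname{tr}(AG)\geq\lambda_{\min}(A)\operatorname{tr}(G)$ for the cycle adjacency matrix $A$ and the Gram matrix $G$; this is correct (for odd $m$ indeed $\lambda_{\min}(A)=-2\cos(\pi/m)$, with a two-dimensional eigenspace) and yields the same threshold $\alpha\leq\pi-\pi/m$, i.e.\ $p\geq p_m^*$. Your spectral route additionally buys a rigidity statement --- at equality the Gram matrix has rank at most two, so the configuration is the planar regular star --- which the paper neither proves nor needs, and it makes the sharpness of the threshold structurally transparent; the paper's metric argument is more elementary and sidesteps any equality analysis.

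Two small caveats. First, for part (ii) with $\phi^*(p)=\pi\ell/k$ and $k<m$ you must still produce an $m$-leafed configuration; the paper does this by concatenating the closed $k$-leafed curve with $\tfrac{m-k}{2}$ figure-eights (in the tangent language, inserting back-and-forth pairs of steps), and your write-up should make this explicit. Second, your worry about coincidences among the values $\pi\ell/k$ is well founded: already for $m=9$ one has $(\phi^*)^{-1}\left(\tfrac{6\pi}{18}\right)=(\phi^*)^{-1}\left(\tfrac{2\pi}{6}\right)$, so the count $\sum_{t=1}^{(m-1)/2}t$ overstates the cardinality of the \emph{set} $P_m$ unless such repetitions are excluded; the paper asserts the same closed form without addressing this, so on this auxiliary claim your proposal is no weaker than the published argument, but neither resolves the issue.
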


\begin{proof}
Let $m$ be fixed and consider the set 
\[
P_m = \left\{ p_{i,m'} := (\phi^*)^{-1}\left(\tfrac{i \pi}{2 m'}\right) : \; 3 \leq m' \leq m, \; m' \in 2\Z + 1, \\
\; 1 < i < m', \; i \in 2\Z \right\},
\]
where the crossing angle $\phi^*$ is given by Definition~\ref{def: crossing angle}. 
(For explicit numerical values of $P_m$, see Figure~\ref{fig:Pm_plot}.)

\begin{figure}[htbp]
    \centering
        \centering
    \includegraphics[width=0.8\textwidth]{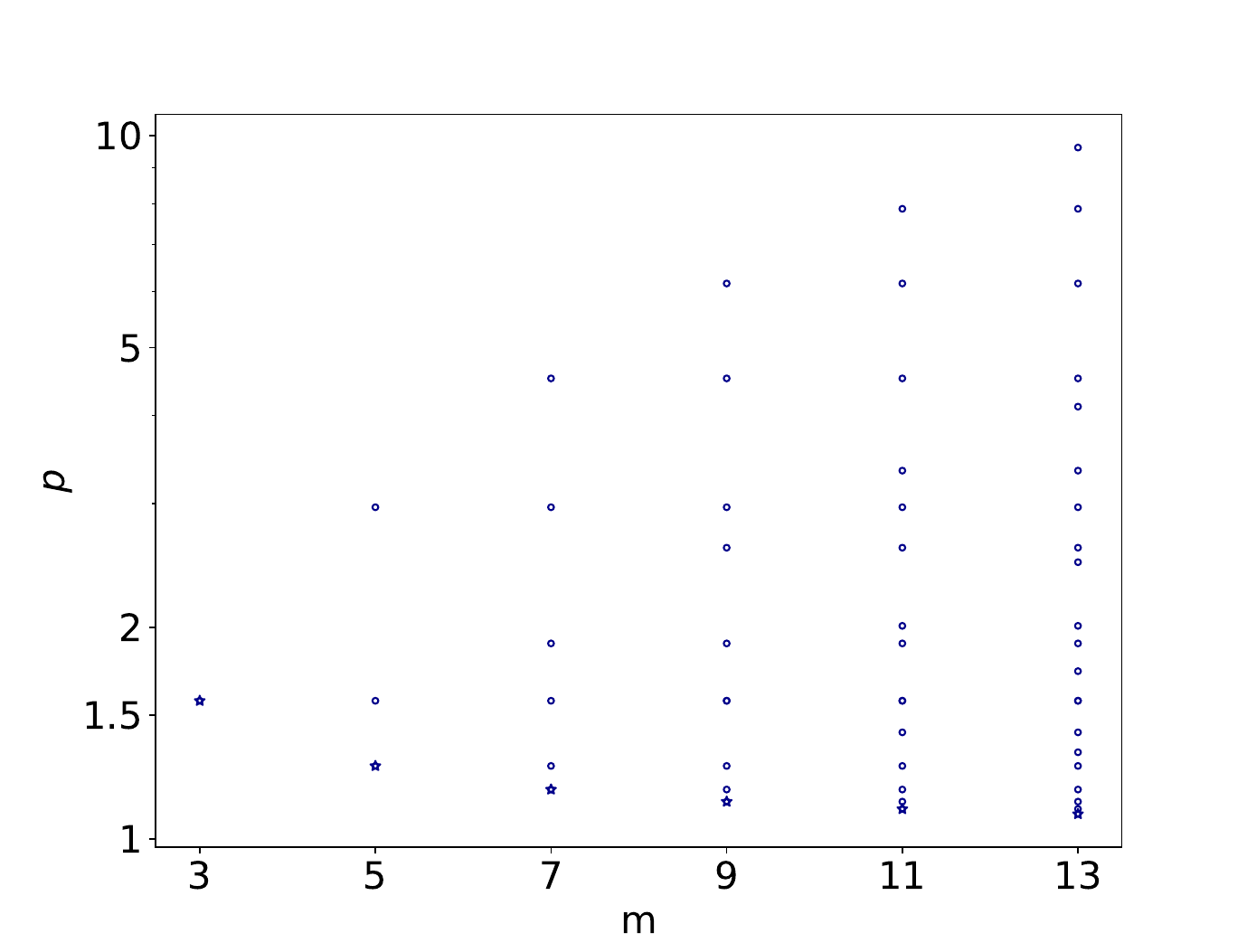}
        \caption{The sets $P_m$ for different $m$, the values $p_m^*$ are emphasized.}
        \label{fig:Pm_plot}
\end{figure}

Directly by definition, $p^*_m=\min P_m = (\phi^*)^{-1}\left(\tfrac{m-1}{2m} \pi\right)$. Moreover, from the monotonicity of the crossing angle $\phi^*$ (Definition~\ref{def: crossing angle}, \cite[Theorem~1.5]{miura_pinned_p}) we have $|P_m|=\tfrac{(m-1)(m+1)}{8}$ and $P_m \subsetneq P_{m+2}$ as well as $p^*_{m+2}<p_m^*$ and  $p_m^*\to 1$.

\textbf{Case $p<p^*_m$:} 
We argue by contradiction. By Lemma~\ref{lemma: characterization tuple}, suppose that there exists $(\omega_1, \dots, \omega_{m})\in \Omega^*(m,p,n)$ for some $n$.
Since $p < (\phi^*)^{-1}(\tfrac{m-1}{2m}\pi)$, and $\phi^*$ strictly decreasing, we have $\pi - \pi/m < 2\phi^*(p) < \pi$. 

Without loss of generality, suppose that $\omega_1$ (the unit tangent at the first endpoint of the first leaf) is given by $e_n$, the North Pole of $\bS^{n-1}$. 
Then $\omega_2$ is contained in the geodesic open ball (in $\bS^{n-1}$) of radius $\tfrac{\pi}{m}$  centered at the South Pole.
Then $\omega_3$ is contained in the geodesic open ball of radius $\tfrac{2\pi}{m}$ centered at the North Pole. 
By repeating this procedure $m$ times (particularly odd times), we have $\omega_{m+1}=\omega_1$ being contained in the open geodesic ball of radius $\pi$ centered at the South Pole, meaning that it cannot be $e_n$. 
Therefore, any $m$-tuple of tangents $(\omega_1, \dots, \omega_{m})$ cannot be closed up and $\Omega^*(m,p,n)$ is actually empty for any $n$.

\textbf{Case $p\in P_m$:} 
By definition of $P_m$, we can write $\phi^* = \phi^*(p)= \tfrac{j}{2m'} \pi$ for some odd positive integer $m'\leq m$ and even integer $j<m'$. Pick $\omega_1 \in \bS^1 \subset \R^2$. 
Denote by $R_\phi$ the counterclockwise rotation in $\R^2$ by an angle $\phi$. 
We set inductively $\omega_{i+1}:=R_{2 \phi^*} \omega_{i}$ for $i=1,\dots,m'-1$.
Then $$\omega_{m'}= R_{2 \phi^*} \omega_{m'-1} = \dots = R_{2(m'-1) \phi* } \omega_1 =  R_{-2 \phi^*} \omega_1$$ 
and thereby $\langle \omega_{i+1}, \omega_i \rangle=\cos(2\phi^*(p))$ for every $i$ up to $m'$, that is $(\omega_1,\dots, \omega_{m'}) \in \Omega^*(m',p,2)$. 
Thus Lemma~\ref{lemma: characterization tuple} gives existence of a closed $m'$-leafed $p$-elastica in $\R^2$. 
Note that, if $m'<m$, i.e.\ $m=m'+2\ell$ with $\ell\in \N$, we construct a closed $m$-leafed $p$-elastica by concatenating a closed $m'$-leafed $p$-elastica with an $\ell$-fold figure-eight $p$-elastica ($2\ell$ leaves).

\textbf{Case $p\in (p^*_m, \infty)\setminus P_m$:} 
For existence, thanks to Lemma~\ref{lemma: characterization tuple} it suffices to find $\omega_1,\dots,\omega_m \in \bS^2 \subset \R^3$ such that $\langle \omega_{i+1}, \omega_i \rangle= \cos (2 \phi^*(p))$. Let
\begin{equation*}
h= \sqrt{1-\frac{\sin^2(\phi^*(p))}{\sin^2(\frac{m-1}{2m} \pi)}} \in (0,1). 
\end{equation*}
This is well defined since $0<2\phi^*(p)<2\phi^*(p_m^*)=\tfrac{m-1}{m}\pi$.
Consider now the circle $C_h \subset \bS^2$ at latitude $-h$ i.e.\ $C_h=\{(x_1,x_2,x_3) \in \bS^2: x_3=-h \}$ with center-point $c_h=(0,0,-h)$. 
Pick $\omega_1, \dots,\omega_m$ to be equidistributed points in $C_h$, enumerated such that (see Figure~\ref{fig: idea of proof}) 
$$
\angle (\omega_{i+1}-c_h,\omega_i-c_h) = \frac{m-1}{m} \pi.
$$
Again, we interpret $\omega_{m+1}=\omega_1$. 
Then
\begin{equation*}
\tfrac{1}{2}|\omega_{i+1}-\omega_i|=\sin\left(\tfrac{m-1}{2m}\pi\right)\sqrt{1-h^2},
\end{equation*}
and hence
\begin{equation*}
    \angle ( \omega_{i+1}, \omega_i ) =  2\arcsin\left( \sin\left(\tfrac{m-1}{2m}\pi\right)\sqrt{1-h^2} \right) = 2 \phi^*(p),
\end{equation*}
as was to be shown. 

\begin{figure}[htbp]
    \centering
    \includegraphics[width=0.8\linewidth, trim=50 130 50 100, clip]{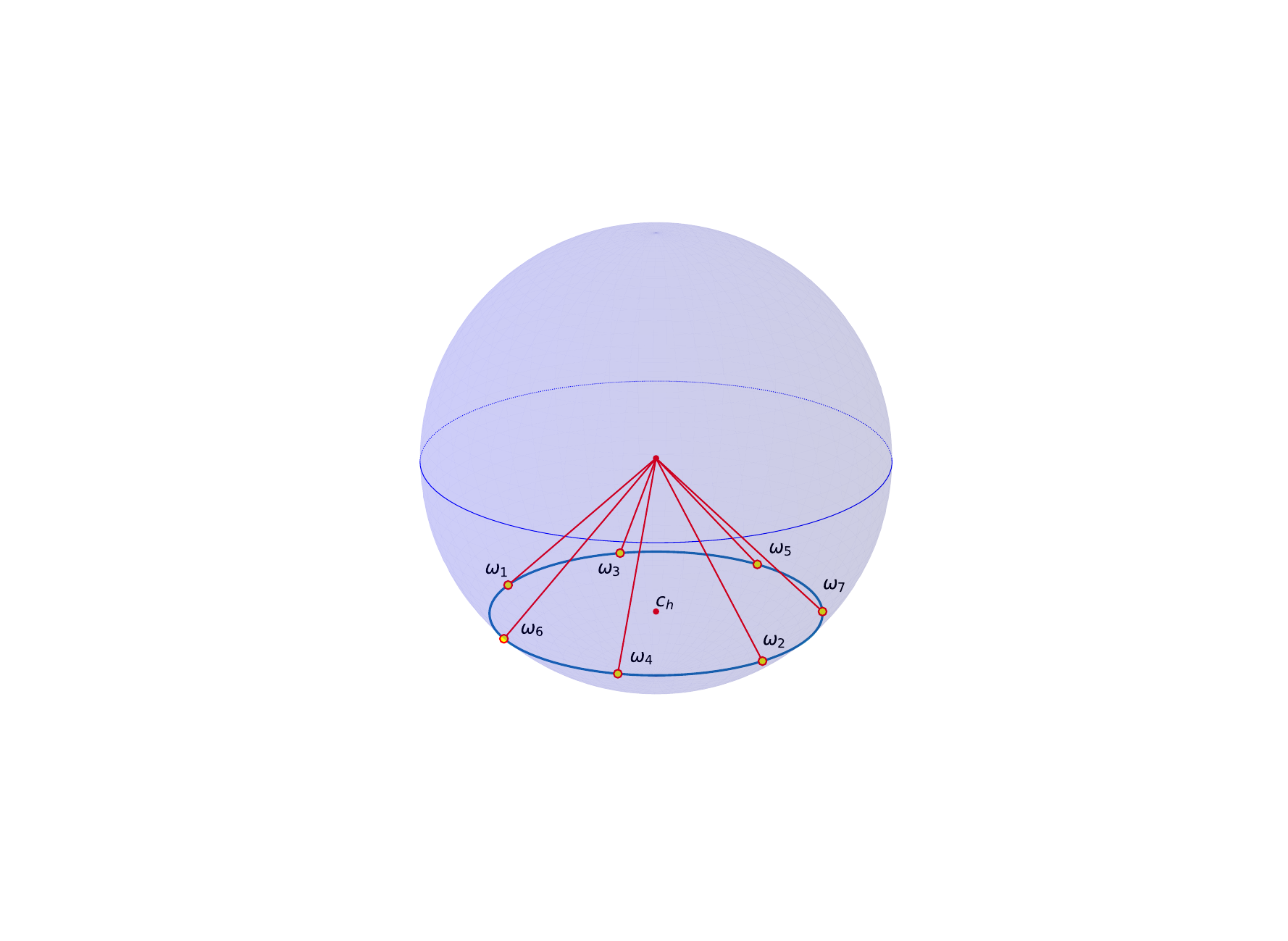} 
    \caption{Construction in the case $m=7$ and $p\in (p_m^*,\infty)\setminus P_m$}
    \label{fig: idea of proof}
\end{figure}

For non-existence, suppose by contradiction that there exists a closed arc-length parameterized $m$-leafed $p$-elastica $\gamma$ in $\R^2$. 
Since $\phi^*(p)< \tfrac{\pi}{2}$, there exist $\{\sigma_i\}_{i=1}^m$ with $\sigma_i \in \{-1,1\}$, and $m' \in 2\Z+1$ with $1\leq m' \leq m$, such that the angle sum is of the form
\begin{equation*}
    T:= \left| \sum_{i=1}^{m} \angle \left(\gamma'\left(\tfrac{i-1}{m}\right),\gamma'\left(\tfrac{i}{m}\right) \right) \right| = \left| \sum_{i=1}^{m} 2 \sigma_i \phi^*(p) \right| = m' 2 \phi^*(p) <m' \pi.
\end{equation*}
Since $m \in 2\Z+1$, necessarily $T\geq 2\phi^*(p) >0$.
However, as $\gamma$ is a closed curve in $W^{2,p}$, i.e.\ $\gamma'\left(0\right) = \gamma'\left(1\right)$, we need to have $T=\ell \pi$ for some $\ell \in 2\Z$ with $2 \leq \ell<m'$. 
This in turn gives $m'\geq 3$ and $\phi^*(p) = \tfrac{\ell}{2m'}\pi$, that is by definition $p \in P_m$ (as $m'\leq m$), which is a contradiction to $p\in (p_m^*,\infty) \setminus P_m$.
\end{proof}

\renewcommand{\arraystretch}{1.5} 

\begin{table}[htbp]
    \centering
    \begin{tabular}{lcc}
        \toprule
        & $m=3$ & $m=5$ \\
        \midrule
        $
        \begin{aligned}
             p&= p_5^*  \\
             &\approx 1.270 
        \end{aligned}$ &
        \begin{minipage}{0.38\linewidth}
            \centering
            $m$-leafed $p$-elasticae do not exist
        \end{minipage} &
        \begin{minipage}{0.38\linewidth}
            \centering
            \includegraphics[width=\linewidth, trim=80 105 80 80, clip]{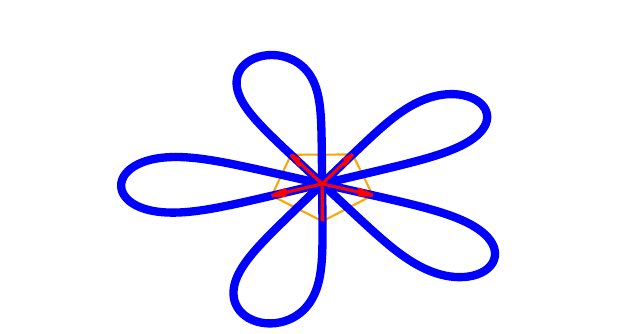}
        \end{minipage} \\
        \midrule
        
        $
        \begin{aligned}
             p&= p^\dagger = p_3^*\\
             &\approx 1.573
        \end{aligned}$ &
        \begin{minipage}{0.38\linewidth}
            \centering
            \includegraphics[width=\linewidth, trim=120 160 110 158, clip]{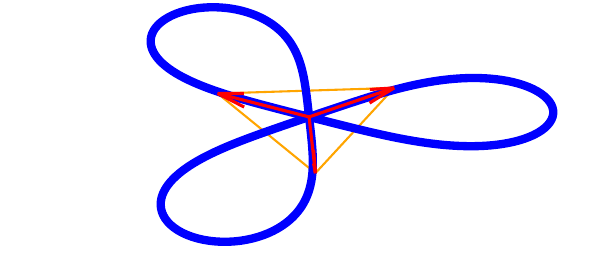}
        \end{minipage} &
        \begin{minipage}{0.38\linewidth}
            \centering
            \includegraphics[width=\linewidth, trim=100 110 100 100, clip]{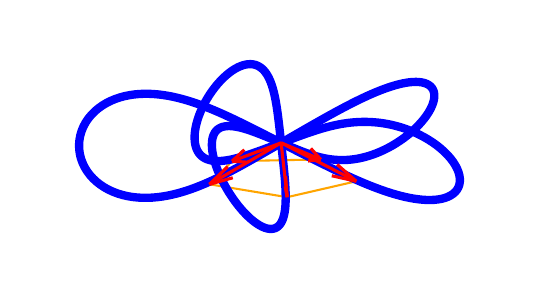}
        \end{minipage} \\
        \midrule
        
        $p=2$ & 
        \begin{minipage}{0.38\linewidth}
            \centering
            \includegraphics[width=\linewidth, trim=200 170 190 160, clip]{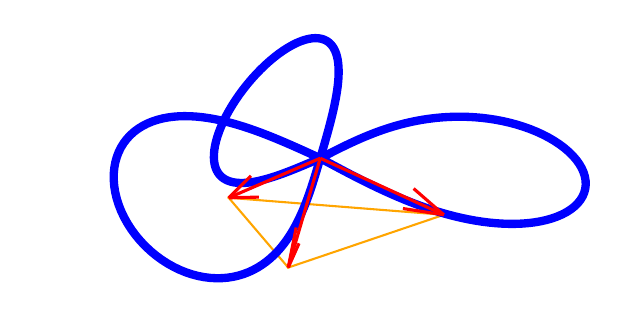}
        \end{minipage} &
        \begin{minipage}{0.38\linewidth}
            \centering
            \includegraphics[width=\linewidth, trim=60 100 60 90, clip]{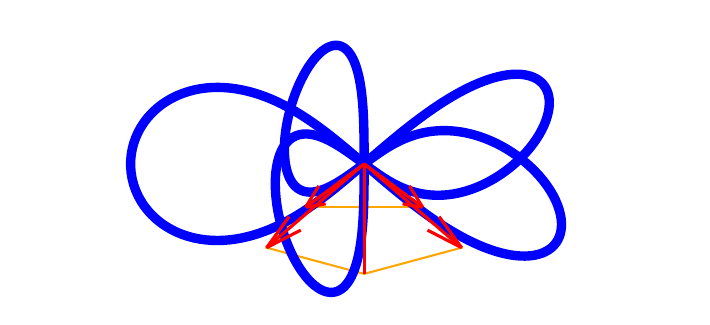}
        \end{minipage} \\
        \midrule
        
        $p=5$ &
        \begin{minipage}{0.38\linewidth}
            \centering
            \includegraphics[width=\linewidth, trim=170 130 160 130, clip]{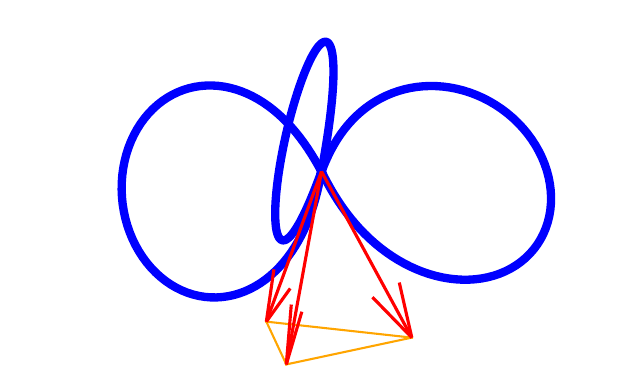}
        \end{minipage} &
        \begin{minipage}{0.38\linewidth}
            \centering
            \includegraphics[width=\linewidth, trim=70 90 70 100, clip]{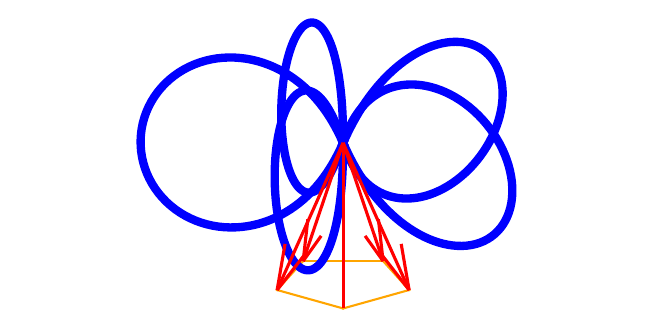}
        \end{minipage} \\
        \bottomrule
    \end{tabular}
    \caption{A selection of $m$-leafed $p$-elasticae, where each leaf is traversed only once. } 
    \label{table: m-leafed elasticae}
\end{table}

In case where equality cannot be attained, we extract energy thresholds $\eps_{m,p}$, depending only on the exponent $p$ and the multiplicity $m$, giving non-optimality of \eqref{eq:p Li-Yau}. 
Thus, in the following result we have a generalization of \cite[Theorem~1.3]{miura_LiYau}.
\begin{theorem}
\label{thm: epsilon sharpness}
    Let $m \geq 3$ be an odd integer, and let $P_m$ and $p_m^*$ be as defined in Theorem~\ref{thm: Li-Yau equality}. Then we have two cases:
    \begin{enumerate}
    \item If $p\in (1,p_m^*)$, then there exists $\eps_{m,p}>0$ such that for any $n\in \N_{\geq 2}$ and any closed immersed $W^{2,p}$-curve $\gamma: \R /\Z \to \R^n$ with a point of multiplicity $m$,
        \begin{equation*}
            \bar\B_p[\gamma] \geq \varpi_p^* m^p + \eps_{m,p}.
        \end{equation*}
    \item If $p \in (p_m^*,\infty) \setminus P_m$, then there exists $\eps_{m,p}>0$ such that for any closed immersed $W^{2,p}$-curve $\gamma: \R /\Z \to \R^2$ with a point of multiplicity $m$,
        \begin{equation*}
            \bar\B_p[\gamma] \geq \varpi_p^* m^p + \eps_{m,p}.
        \end{equation*}
    \end{enumerate}
\end{theorem}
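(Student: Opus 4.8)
The plan is to argue by contradiction and compactness, reducing the loss of optimality to the emptiness of $\Omega^*(m,p,\cdot)$ already established in Theorem~\ref{thm: Li-Yau equality}. Suppose the asserted $\eps_{m,p}$ did not exist; then there would be a sequence of closed immersed $W^{2,p}$-curves $\gamma_k$ (in $\R^{n_k}$ with arbitrary $n_k\geq2$ in case (1), or in $\R^2$ in case (2)), each carrying a point of multiplicity $m$, with $\bar\B_p[\gamma_k]\to\varpi_p^* m^p$. Using dilation invariance I would normalize $\Ll[\gamma_k]=1$ and reparameterize by arclength, then cut $\gamma_k$ at its multiplicity point exactly as in the proof of Theorem~\ref{thm: p Li-Yau} to obtain leaves $\tilde\gamma_{i}^k$ of length $\ell_i^k$ with $\sum_i\ell_i^k=1$. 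Writing $\bar\B_p[\tilde\gamma_i^k]=\varpi_p^*+\delta_i^k$ with $\delta_i^k\geq0$, the decomposition would read
\[
\bar\B_p[\gamma_k]=\varpi_p^*\sum_{i=1}^m(\ell_i^k)^{1-p}+\sum_{i=1}^m(\ell_i^k)^{1-p}\delta_i^k .
\]
Since $\sum_i(\ell_i^k)^{1-p}\geq m^p$ by convexity of $t\mapsto t^{1-p}$ (with equality only when all $\ell_i^k=1/m$) and every summand is nonnegative, the convergence $\bar\B_p[\gamma_k]\to\varpi_p^* m^p$ forces both $\ell_i^k\to1/m$ and $\delta_i^k\to0$ for each $i$.

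The second step I would carry out is a \emph{single-leaf angle stability} statement: a loop $\sigma$ (coincident endpoints) with $\bar\B_p[\sigma]$ close to $\varpi_p^*$ has its two endpoint tangents nearly at the crossing angle $2\phi^*(p)$. I plan to encode this through the scale-invariant value function $g(\beta):=\inf\{\bar\B_p[\sigma]:\sigma\text{ a loop with endpoint-tangent angle }\beta\}$. A direct-method argument gives a minimizer, which satisfies the Euler--Lagrange equation in the interior and is therefore a $p$-elastica; by Proposition~\ref{prop: one leaf Li-Yau} one gets $g(2\phi^*(p))=\varpi_p^*$ and $g(\beta)>\varpi_p^*$ for $\beta\neq2\phi^*(p)$ (otherwise a minimizer would be a figure-eight, whose endpoint-tangent angle is $2\phi^*(p)$), while lower semicontinuity of $g$ follows from $C^1$-compactness. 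The decisive point in case (1) is that $p<p_m^*<2$, so by Theorem~\ref{thm:p<=2 case} every $p$-elastica is at most three dimensional; hence every constrained minimizer reduces isometrically to $\R^3$, making $g$ \emph{independent of the ambient dimension}. Consequently $\min_{|\beta-2\phi^*(p)|\geq\eta}g>\varpi_p^*$ for every $\eta>0$, and $\delta_i^k\to0$ forces $\beta(\tilde\gamma_i^k)\to2\phi^*(p)$. In case (2) the same value-function argument applies verbatim in the fixed space $\R^2$, so no dimensional reduction is needed.

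The final step is to pass to the limit at the level of unit tangents. Letting $\omega_i^k$ denote the tangent of $\gamma_k$ at the $i$-th passage through the joint, the vectors $\omega_i^k,\omega_{i+1}^k$ are the endpoint tangents of $\tilde\gamma_i^k$, so the previous step yields $\langle\omega_{i+1}^k,\omega_i^k\rangle\to\cos2\phi^*(p)$ cyclically. In case (1) the $m$ vectors $\omega_1^k,\dots,\omega_m^k$ span a subspace of dimension at most $m$, so after an ambient rotation I may assume $\omega_i^k\in\bS^{m-1}\subset\R^m$; compactness of $\bS^{m-1}$ then gives, along a subsequence, limits $\omega_i^\infty$ with $\langle\omega_{i+1}^\infty,\omega_i^\infty\rangle=\cos2\phi^*(p)$, i.e.\ $(\omega_1^\infty,\dots,\omega_m^\infty)\in\Omega^*(m,p,m)$. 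This contradicts the emptiness of $\Omega^*(m,p,n)$ for $p<p_m^*$ proved in Theorem~\ref{thm: Li-Yau equality}(i). In case (2) the identical argument in $\bS^1$ produces a tuple in $\Omega^*(m,p,2)$, contradicting the non-existence for $p\in(p_m^*,\infty)\setminus P_m$ in $\R^2$. Either contradiction establishes the uniform gap $\eps_{m,p}>0$.

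I expect the main obstacle to be precisely the dimension-independence demanded in case (1): producing one threshold valid for all $n$ simultaneously rather than an $n$-dependent $\eps_{m,p,n}$ that could degenerate as $n\to\infty$. The plan dissolves this via two complementary reductions. The value function $g$ is dimension-free because, for $p<p_m^*<2$, constrained minimizing $p$-elasticae are at most three dimensional (Theorem~\ref{thm:p<=2 case}), so the single-leaf modulus does not see the ambient dimension; and the closing obstruction is genuinely finite dimensional because $m$ tangent vectors always lie in an $m$-dimensional subspace, so the limiting configuration can be tested against the fixed empty set $\Omega^*(m,p,m)$. Once these two reductions are in place, the remaining estimates (the convexity split and the $C^1$-compactness) are routine.
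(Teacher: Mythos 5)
Your proposal is correct, and it reaches the conclusion by a genuinely different route than the paper. The paper first constructs an actual minimizer of $\bar\B_p$ in the class of closed curves with a point of multiplicity $m$ (which requires a separate non-collision argument for the preimages of the joint via a total-curvature bound), then in case (ii) argues softly from the equality characterization in Theorem~\ref{thm: p Li-Yau} together with Theorem~\ref{thm: Li-Yau equality}, and in case (i) uses a pigeonhole on the signed angle sum to locate \emph{one} leaf whose endpoint-tangent angle deviates from $2\phi^*(p)$ by an explicit $\eta_{m,p}$, bounding its excess energy by a constrained minimization over loops with prescribed endpoint angles, reduced to $\R^3$ via Theorems~\ref{thm:p<=2 case} and~\ref{thm:p>2 case}. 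You instead run a contradiction-and-compactness scheme on a minimizing sequence: the convexity of $t\mapsto t^{1-p}$ forces every leaf to become optimal, your value function $g$ upgrades the rigidity of Proposition~\ref{prop: one leaf Li-Yau} to a quantitative angle stability, and the limit tuple of joint tangents, reduced to $\bS^{m-1}$ by finite-dimensionality, lands in $\Omega^*(m,p,\cdot)$, contradicting its emptiness established in (the proof of) Theorem~\ref{thm: Li-Yau equality} via Lemma~\ref{lemma: characterization tuple}. The two arguments are dual: the paper uses non-closability of the tuple to find a bad leaf a priori; you use near-optimality of all leaves to produce a closable tuple a posteriori. Your version avoids the direct method in the multiplicity class altogether, treats both cases uniformly, and your lower-semicontinuity argument for $g$ on a compact angle set makes fully explicit the uniformity over the deviating angle that the paper's choice of $\eps_{m,p}$ in its Step~3 implicitly requires (there the bad angle $\phi_1$ depends on $n$, so one must still take an infimum of the constrained minimal energies over a compact angle range). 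The paper's version yields the existence of a minimizer among curves of multiplicity $m$ as a by-product and localizes more explicitly where the energy excess arises. Both rest on the same three ingredients: Proposition~\ref{prop: one leaf Li-Yau} with its rigidity, the dimensional reduction of $p$-elasticae, and the tuple characterization of Lemma~\ref{lemma: characterization tuple}.
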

\begin{proof}
   For $n\geq 2$, let
    \begin{equation*}
    \begin{split}
    \C_m^n&:=\{\gamma \in W^{2,p}(\R/\Z;\R^n): \text{$\gamma$ is immersed and has a point of multiplicity }m\}\\
    \end{split}
    \end{equation*}
    and
    \begin{equation*}
        \beta_m^n := \inf_{\gamma \in \C_m^n} \bar\B_p[\gamma].
    \end{equation*}
    It suffices to show that for $p<p_m^*$, or $p\geq p_m^*$ with $p \notin P_m$, we have $\beta_m^n \geq \varpi_p^* m^p + \eps_{m,p}$ with $\eps_{m,p}$ independent of $n$. 
  
  \textbf{Step 1:} We show that there exists a closed curve $\bar \gamma^n \in \C_m^n$ such that the infimum is attained.
  Without loss of generality, we assume the minimizing sequence $\gamma_j \in \C_m^n$ of $\bar\B_p$ to be arclength parameterized (in particular, of unit length) and the point of multiplicity $m$ to be the origin. That is for each $j$ we have $a_j^1 <\dots<a_j^i<\dots < a_j^m$ such that $\gamma_j(a_j^i)=0$ for any $i=1,\dots, m$.
  By the direct method (see \cite[Proposition~3.17]{miura_LiYau}), there exists an arclength parameterized minimizer $\bar \gamma^n \in W^{2,p}(\R/\Z; \R^n)$, i.e.\ $\beta_m^n=\bar \B_p[\bar \gamma^n] = \B_p[\bar \gamma^n]$ and $\gamma_j \to \bar \gamma^n$ weakly in $W^{2,p}(\R/\Z;\R^n)$ and strongly in $C^1$. 
  
  We now show that adjacent points $a_j^i$ and $a_j^{i+1}$ (we again identify $a_j^{m+1}=a_j^1$) do not collide as $j\to \infty$, so $\bar \gamma^n$ still has mulitplicity $m$ (that is $\bar \gamma^n \in \C_m^n$). By Hölder's inequality for the total curvature $\int_\gamma k ds$ and \cite[Lemma~5.2]{miura_LiYau} we have
  \begin{equation*}
     \pi^p\leq  \left(\int_{a_j^i}^{a_j^{i+1}} k ds \right)^p \leq |a_j^{i+1}-a_j^i|^{p-1} \cdot \int_{a_j^i}^{a_j^{i+1}} k^p ds  \leq
     C_m |a_j^{i+1}-a_j^i|^{p-1}.
  \end{equation*}
  For the last inequality, we used 
  $$\B_p[\bar \gamma^n|_{a_j^i}^{a_j^{i+1}}] \leq\B_p[\bar \gamma^n] = \beta_m^n \leq \beta_m^2 =:C_m.$$
  Thus $|a_j^{i+1}-a_j^i|\geq\delta_{m,p}>0$ for any $i$ and $j$. 
  Hence no points collide and up to a subsequence the $a_j^i$'s converge to $m$ distinct points $a^i$ in $\R/\Z$ as $j\to \infty$. 
  By the $C^1$ convergence, we have $\bar \gamma^n (a^i)=0$ for every $i=1,\dots,m$, and hence $\bar \gamma^n \in \C_m^n$.

 \textbf{Step 2:} The case $p\geq p_m^*$ and $p\notin P_m$ follows directly from a contradiction argument. Suppose that $\beta_m^2 = \varpi_p^* m^p$. Then by Theorem~\ref{thm: p Li-Yau} it follows that $\bar \gamma^2 \in W^{2,p}(\R / \Z; \R^2)$ is a planar $m$-leafed $p$-elastica. This contradicts the nonexistence result of Theorem~\ref{thm: Li-Yau equality}. 
 
 \textbf{Step 3:} The case $p<p_m^*$ requires an argument independent of the ambient dimension $n$. 
 First, we recall that $\pi - \tfrac{\pi}{m} < 2 \phi^*(p) < \pi$. 
 Take now $m$ and $n$ arbitrary and consider $\bar \gamma^n \in \C_m^n$ with $\bar\B_p[\bar \gamma^n]=\beta_m^n$. 
 Set $\phi_i := \tfrac{1}{2}\angle((\bar \gamma^n) '(a^i), (\bar \gamma^n)'(a^{i+1}))$. Since $\bar \gamma^n$ is closed up to first order, necessarily $\sum_{i=1}^m 2\phi_i = m' \pi$ for an even integer $m'\leq m-1$. Thus
\begin{equation*}
    \sum_{i=1}^m 2\phi_i \leq (m-1) \pi < 2 m \phi^*(p).
\end{equation*}
Therefore, there exists some $i$ such that $\phi_i\leq \tfrac{m-1}{2m} \pi < \phi^*(p)$; by a relabeling, we assume $i=1$. 
From the inequalities $2\phi_1 \leq \pi-\tfrac{\pi}{m} < 2 \phi^*(p)$, we obtain an explicit lower bound of the form $\phi^*(p)-\phi_1 \geq \phi^*(p)-\tfrac{1}{2}(\pi-\tfrac{\pi}{m})=:\eta_{m,p}>0$, independent of $n$. 

For $\phi_1 \in [0,2\pi)$, let $\bar \gamma_{\phi_1}^n$ be a minimizer (existence again follows by a direct method argument and strong $C^1$ convergence ensures that the limiting curve is still admissible, see the proof of \cite[Proposition~4.1]{miura_pinned_p}) of $\bar \B_p$ in the class
\begin{equation*}
\begin{split}
   \C^n_{\phi_1} = \{\gamma \in W^{2,p}(0,1;\R^n) \,:\, &|\gamma'|\equiv 1, \quad \gamma(0)=\gamma(1)=0, \\
   &\angle(\gamma'(0),e_1) = \angle(\gamma'(1),e_1) = \phi_1\}.
\end{split}
\end{equation*}

Since $\bar \gamma_{\phi_1}^n$ is a $p$-elastica, by Theorem~\ref{thm:p<=2 case} and \ref{thm:p>2 case}, it is either analytic and three dimensional or a flat-core $p$-elastica. In the latter case, its aligned representative (Definition~\ref{def: flatcore representation}) has the exact same $p$-bending energy. Hence, we may assume that $\bar \gamma_{\phi_1}^n$ is at most three dimensional. From Proposition~\ref{prop: one leaf Li-Yau} and the fact that $\phi^*(p)-\phi_1\geq \eta_{m,p}$, the curve $\bar \gamma_{\phi_1}^n$ cannot be a $\tfrac{1}{2}$-fold figure-eight $p$-elastica and so
$$\bar\B_p[\bar \gamma_{\phi_1}^n]-\varpi_p^* = \bar\B_p[\bar \gamma^3_{\phi_1}]-\varpi_p^* \geq  \eps_{m,p}>0. $$

Now we go back to the estimate of $\bar \gamma^n$.
By optimality, we have 
$$\B_p[\bar \gamma^n|_{[a^1,a^2]}] \Ll^{p-1}[\bar \gamma^n|_{[a^1,a^2]}] = \bar\B_p[\bar \gamma^n|_{[a^1,a^2]}]\geq \bar \B_p[\bar \gamma^n_{\phi_1}]\geq \varpi_p^* +  \eps_{m,p}.$$ 
Thus we estimate,
\begin{equation*}
\begin{split}
    \bar \B_p[\bar \gamma^n] &= \left( \B_p[\bar \gamma^n|_{[a^1,a^2]}]+ \sum_{i=2}^m\B_p[\bar \gamma^n|_{[a^i,a^{i+1}]}] \right) \Ll^{p-1}[\bar \gamma^n]\\
    &\geq \left( \frac{\varpi_p^*+\eps_{m,p}}{\Ll^{p-1}[\bar \gamma^n|_{[a^1,a^2]}]} + \sum_{i=2}^m\frac{\varpi_p^*}{\Ll^{p-1}[\bar \gamma^n|_{[a^i,a^{i+1}]}]}  \right) \Ll^{p-1}[\bar \gamma^n] \\
    &= \frac{\eps_{m,p}}{\Ll^{p-1}[ \bar \gamma^n|_{[a^1,a^2]}]} \Ll^{p-1}[\bar \gamma^n]  + \varpi_p^* \sum_{i=1}^{m} \frac{1}{\Ll^{p-1}[\bar \gamma^n|_{[a^i,a^{i+1}]}]} \Ll^{p-1}[\bar \gamma^n] \\
    &\geq \varpi_p^* m^p + \eps_{m,p},
\end{split}
\end{equation*}
using the HM-AM and Jensen's inequality as in the proof of Theorem~\ref{thm: p Li-Yau}. Hence we have for any $n \geq 2$ and $\gamma \in W^{2,p}(\R/\Z; \R^n)$ the inequality 
$$\bar \B_p[\gamma] \geq \inf_{2\leq n' \leq n} \B_p [\bar \gamma^{n'}] \geq \varpi^*_p m^p + \eps_{m,p},$$
which finishes the proof. 
\end{proof}

\subsection{Existence of minimal $p$-elastic networks}
\label{section: p networks}
One important application of the Li--Yau type inequality \eqref{eq:p Li-Yau} is the existence of minimal $p$-elastic networks. 
The most prominent setting here is the minimization of the bending energy of so-called $\Theta$-networks. For $\alpha \in (0,\pi)$ given, a triplet $\Gamma=(\gamma_1,\gamma_2, \gamma_3) \in W^{2,p}(0,1;\R^n)^3$ is called a $\Theta$-network with angles $(\alpha, \alpha, 2\pi-2\alpha)$ if 
\begin{align*}
&\gamma_1(0)=\gamma_2(0)=\gamma_3(0),\\
&\gamma_1(1)=\gamma_2(1)=\gamma_3(1),\\
&\angle (\gamma_1'(0),\gamma_2'(0)) = \angle (\gamma_1'(1),\gamma_2'(1)) = \angle (\gamma_2'(0),\gamma_3'(0))=\angle (\gamma_2'(1),\gamma_3'(1)) = \alpha, \\
&\angle (\gamma_1'(0),\gamma_3'(0)) = \angle (\gamma_1'(1),\gamma_3'(1))=2\pi-2\alpha.    
\end{align*}
The set of all such triplets is denoted by $\Theta(p,\alpha)$. We set
\begin{equation*}
    \bar\B_p[\Gamma] = \Ll[\Gamma]^{p-1} \B_p[\Gamma] := \left( \sum_{i=1}^3 \Ll[\gamma_i] \right)^{p-1} \left( \sum_{i=1}^3 \B_p[\gamma_i] \right).
\end{equation*}

However, establishing the existence of minimizers in the class $\Theta(p,\alpha)$ requires subtle arguments to avoid collapsing of the $\Theta$-network (or parts of it) to a point \cite{dallacqua_minimal_elastic_networks}.  
Using the Li--Yau type inequality for $p=2$, previous work \cite{miura_LiYau} extends the existence result of \cite{dallacqua_minimal_elastic_networks} for $p=n=2$ to $p=2$ and $n\geq2$. In a different direction, the work \cite{miura_pinned_p} extends it to the setting $n=2$ and $p\in (1, \infty)$. 

We note that \cite[Lemma~6.1, 6.2, 6.4]{miura_pinned_p} are independent of the ambient dimension $n$, and using Corollary~\ref{cor: p Li-Yau for open curves} for general $n$ in \cite[Lemma~6.3]{miura_pinned_p} instead of \cite[Theorem~5.2]{miura_pinned_p}, the existence result \cite[Theorem~1.9]{miura_pinned_p} extends to $n\geq2$ and $p \in (1,\infty)$. 

\begin{theorem}
    Let $p \in (1,\infty)$ and $\alpha \in (0,\pi)$ such that $0< \alpha < \pi-\phi^*(p)$. Then there exists  $\bar \Gamma \in \Theta(p,\alpha)$ such that $\bar \B_p[\bar \Gamma] = \inf_{\Gamma \in \Theta(p,\alpha)} \bar \B_p[\Gamma]$.
\end{theorem}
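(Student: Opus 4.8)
The plan is to run the direct method and to import the framework of \cite[Section~6]{miura_pinned_p} wholesale, replacing its single dimension-dependent ingredient --- the planar Li--Yau inequality --- by its $\R^n$ counterpart, Corollary~\ref{cor: p Li-Yau for open curves}.

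First I would fix a minimizing sequence $\Gamma_j=(\gamma_1^j,\gamma_2^j,\gamma_3^j)\in\Theta(p,\alpha)$ for $\bar\B_p$. Since $\bar\B_p$ is invariant under dilations, I normalize each $\Gamma_j$ so that $\Ll[\Gamma_j]=1$ and parameterize each $\gamma_i^j$ on $[0,1]$ with constant speed $\ell_i^j=\Ll[\gamma_i^j]\le1$. A direct computation shows $\int_0^1|(\gamma_i^j)''|^p\,dt=(\ell_i^j)^{2p-1}\B_p[\gamma_i^j]\le\B_p[\gamma_i^j]$, and since $\sum_i\B_p[\gamma_i^j]=\B_p[\Gamma_j]$ stays bounded along a minimizing sequence, the three curves are uniformly bounded in $W^{2,p}(0,1;\R^n)$. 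Hence, up to a subsequence, $\gamma_i^j\rightharpoonup\bar\gamma_i$ weakly in $W^{2,p}$ and strongly in $C^1$, defining a limit triple $\bar\Gamma=(\bar\gamma_1,\bar\gamma_2,\bar\gamma_3)$. The $C^1$ convergence preserves the concurrency and the prescribed angle relations at both junctions, while the convexity of $t\mapsto|t|^p$ yields weak lower semicontinuity of each $\B_p$, so that $\bar\B_p[\bar\Gamma]\le\liminf_j\bar\B_p[\Gamma_j]$. Thus it only remains to verify that $\bar\Gamma$ is genuinely admissible, i.e.\ that $\bar\Gamma\in\Theta(p,\alpha)$.

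The crux --- and the only place where the hypothesis $\alpha<\pi-\phi^*(p)$ and the new $\R^n$ Li--Yau inequality are used --- is to exclude degeneration of the network, namely to show $\liminf_j\ell_i^j>0$ for each $i$ (equivalently, that the two triple junctions do not coalesce). This is exactly the content of \cite[Lemma~6.1, 6.2, 6.4]{miura_pinned_p}, whose arguments are purely variational and geometric and hence dimension-free, together with \cite[Lemma~6.3]{miura_pinned_p}, whose only analytic input is the Li--Yau lower bound; substituting Corollary~\ref{cor: p Li-Yau for open curves} for \cite[Theorem~5.2]{miura_pinned_p} there makes the estimate valid for every $n\ge2$. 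Concretely, were some $\ell_i^j\to0$, the corresponding arc would close up, forcing the remaining arcs to form nearly closed loops subject to the junction angles; the sharp constant $\varpi_p^*$ (dimension-independent by Theorem~\ref{thm: p Li-Yau}) then bounds their concentrated $p$-bending energy from below by strictly more than that of an explicit non-degenerate competitor admissible under $\alpha<\pi-\phi^*(p)$, contradicting minimality. With non-degeneration secured, lower semicontinuity gives $\bar\B_p[\bar\Gamma]=\inf_{\Gamma\in\Theta(p,\alpha)}\bar\B_p[\Gamma]$.

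I expect the main obstacle to be precisely this non-degeneration step in higher codimension: one must confirm that a collapsing configuration cannot exploit the extra dimensions of $\R^n$ to lower its energy below the threshold. The resolution is that the competitor in the comparison can always be chosen planar --- hence valid in every $\R^n$ --- while the Li--Yau bound applied to the limiting loops is itself independent of $n$. This is exactly why the dimension-free lemmas of \cite{miura_pinned_p} combine with Corollary~\ref{cor: p Li-Yau for open curves} to close the argument without any genuinely new geometric construction.
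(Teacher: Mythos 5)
Your proposal is correct and follows essentially the same route as the paper: the paper's proof of this theorem consists precisely of the observation that \cite[Lemmas~6.1, 6.2, 6.4]{miura_pinned_p} are dimension-independent and that substituting Corollary~\ref{cor: p Li-Yau for open curves} for \cite[Theorem~5.2]{miura_pinned_p} in \cite[Lemma~6.3]{miura_pinned_p} extends \cite[Theorem~1.9]{miura_pinned_p} to all $n\geq 2$. Your additional details on the direct method and the non-degeneration step are consistent with that argument and correctly identify the single point where the new $\R^n$ Li--Yau inequality enters.
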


\subsection{Embeddedness of $p$-elastic flows}
\label{section: p-elastic flows}

Consider a dynamic flow of closed curves $\gamma: [0,T) \times \R /\Z \to \R^n$ with initial condition $\gamma(0,\cdot)= \gamma_0$. 
Two examples within the framework of this paper are (i) the length-preserving and (ii) the length-penalized $p$-elastic flow.
Here we define those flows in an abstract way.

\begin{enumerate}
    \item We call $\gamma$ a \emph{length-preserving $p$-elastic flow} if $\Ll[\gamma(t,\cdot)]\equiv \Ll[\gamma_0]$ holds for $t\geq0$, while $\B_p[\gamma(t,\cdot)]<\B_p[\gamma_0]$ holds for $t>0$ unless $\gamma(t,\cdot)\equiv\gamma_0$ (stationary).
    \item Given a constant $\lambda>0$, we call $\gamma$ a \emph{length-penalized $p$-elastic flow} if $(\B_p+\lambda\Ll)[\gamma(t,\cdot)]<(\B_p+\lambda\Ll)[\gamma_0]$ holds for $t>0$ unless stationary.
\end{enumerate}

For concrete examples of equations for those flows and appropriate local-in-time existence results, see e.g.\ \cites{Okabe_p_flow,Okabe_p_flow2} for the planar case and \cites{p-elastic_flow_blatt,p-flow_blatt_minimizing_movement} for general dimension, and the references therein.

Assuming the existence of a length-preserving/penalized $p$-elastic flow, a natural question to ask is:
 
\emph{Suppose that the initial curve $\gamma_0$ is embedded. Then, what is the maximal energy threshold below for which the $p$-elastic flow remains embedded for all times $t\geq 0$?} 

For classical elastic flows with $p=2$, the energy threshold $C^*=4\varpi_2^* \approx 112.439$ (the normalized bending energy of a $1$-fold figure-eight elastica) and its optimality for $n \geq 3$ have been obtained by the Li--Yau type inequality from \cite{miura_LiYau} and the construction of explicit perturbations in \cite{miura_mueller_rupp}.

For general $p\in(1,\infty)$, thanks to Corollary~\ref{cor: embedding below}, we can deduce embeddedness-preserving results analogous to \cite{miura_mueller_rupp}.

\begin{theorem}
    Let $p\in (1,\infty)$, $n\geq2$ and $\gamma:[0,T)\times\R/\Z\to\R^n$ be a length-preserving $p$-elastic flow with an embedded initial datum $\gamma_0\in W^{2,p}(\R/\Z;\R^n)$.
    Suppose that
    $$\bar \B_p[\gamma_0] \leq 2^p \varpi_p^*.$$
    Then $\gamma(t,\cdot)$ remains embedded for all $t\in[0,T)$.
\end{theorem}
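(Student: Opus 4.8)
The plan is to exploit that along a length-preserving flow the normalized energy $\bar\B_p$ is essentially just $\B_p$ up to a fixed factor, hence monotone, and then to invoke the strict embeddedness threshold of Corollary~\ref{cor: embedding below}. The only genuine subtlety is the boundary case of equality at $t=0$, which is resolved by the strict energy dissipation built into the definition of the flow.

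First, I would record the monotonicity. Since the flow is length-preserving, $\Ll[\gamma(t,\cdot)] \equiv \Ll[\gamma_0]$ for all $t\geq 0$, so
\[
\bar\B_p[\gamma(t,\cdot)] = \Ll[\gamma(t,\cdot)]^{p-1}\B_p[\gamma(t,\cdot)] = \Ll[\gamma_0]^{p-1}\B_p[\gamma(t,\cdot)].
\]
Thus the normalized energy differs from $\B_p$ only by the fixed positive factor $\Ll[\gamma_0]^{p-1}$, and control of $\bar\B_p$ along the flow is equivalent to control of $\B_p$.

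Next I would argue by a case distinction on $t$. At $t=0$, the curve $\gamma(0,\cdot)=\gamma_0$ is embedded by hypothesis. For each fixed $t>0$, the definition of the length-preserving $p$-elastic flow yields the dichotomy: either $\gamma(t,\cdot)\equiv\gamma_0$, in which case $\gamma(t,\cdot)$ is embedded since $\gamma_0$ is, or $\B_p[\gamma(t,\cdot)] < \B_p[\gamma_0]$ strictly. In the latter case, multiplying by $\Ll[\gamma_0]^{p-1}$ and using the hypothesis gives the strict chain
\[
\bar\B_p[\gamma(t,\cdot)] = \Ll[\gamma_0]^{p-1}\B_p[\gamma(t,\cdot)] < \Ll[\gamma_0]^{p-1}\B_p[\gamma_0] = \bar\B_p[\gamma_0] \leq 2^p\varpi_p^*,
\]
so that $\bar\B_p[\gamma(t,\cdot)] < 2^p\varpi_p^*$, and Corollary~\ref{cor: embedding below} immediately forces $\gamma(t,\cdot)$ to be an embedding.

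Combining the two cases, $\gamma(t,\cdot)$ is embedded for every $t\in[0,T)$, as claimed. I do not expect any genuine obstacle here, since the argument is immediate once monotonicity is recorded; the one point demanding care is that Corollary~\ref{cor: embedding below} requires the \emph{strict} inequality $\bar\B_p < 2^p\varpi_p^*$, whereas the hypothesis only supplies $\bar\B_p[\gamma_0] \leq 2^p\varpi_p^*$. It is precisely the strict energy dissipation of the flow for $t>0$ that upgrades the bound to a strict one and thereby closes this gap, which is why the non-strict threshold in the hypothesis is admissible.
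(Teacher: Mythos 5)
Your proposal is correct and follows essentially the same route as the paper: use the definition of the length-preserving flow to get strict decrease of $\B_p$ (hence of $\bar\B_p$, since length is fixed) for $t>0$ unless the curve coincides with $\gamma_0$, and then apply Corollary~\ref{cor: embedding below}. The paper phrases the dichotomy as stationary versus non-stationary flow rather than pointwise in $t$, but the substance, including the observation that strict dissipation upgrades the non-strict hypothesis to the strict threshold required by the corollary, is identical.
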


\begin{proof}
    If $\gamma$ is stationary, then embeddedness is clearly preserved.
    In the non-stationary case, we deduce by definition that $\bar \B_p[\gamma(t,\cdot)]<\bar \B_p[\gamma_0]\leq 2^p \varpi_p^*$ holds for $t>0$, so by Corollary \ref{cor: embedding below} each $\gamma(t,\cdot)$ is embedded.
\end{proof}

\begin{theorem}
    Let $p\in (1,\infty)$, $n\geq2$ and $\gamma:[0,T)\times\R/\Z\to\R^n$ be a length-penalized $p$-elastic flow for $\lambda>0$ with an embedded initial datum $\gamma_0\in W^{2,p}(\R/\Z;\R^n)$.
    Suppose that
    $$\B_p[\gamma_0] + \lambda \Ll[\gamma_0] \leq 2p\left( \frac{\lambda}{p-1}\right)^{\frac{p-1}{p}} (\varpi_p^*)^{\frac{1}{p}}.$$
    Then $\gamma(t,\cdot)$ remains embedded for all $t\in[0,T)$.
\end{theorem}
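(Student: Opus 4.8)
The plan is to follow the same strategy as the preceding length-preserving theorem, reducing everything to Corollary~\ref{cor: embedding below}, which guarantees embeddedness once $\bar\B_p<2^p\varpi_p^*$. The new difficulty is that a length-penalized flow does not preserve $\Ll$, so the dilation-invariant energy $\bar\B_p=\Ll^{p-1}\B_p$ is not directly monotone along the flow; only $\B_p+\lambda\Ll$ is. I therefore first need a sharp inequality relating these two quantities.

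The key step is to establish, for every immersed $\gamma\in W^{2,p}(\R/\Z;\R^n)$, the interpolation bound
\[
\B_p[\gamma]+\lambda\Ll[\gamma]\ \geq\ p\Big(\tfrac{\lambda}{p-1}\Big)^{\frac{p-1}{p}}\,\bar\B_p[\gamma]^{\frac1p},
\]
which I would obtain from the weighted arithmetic--geometric mean inequality $\theta x+(1-\theta)y\geq x^\theta y^{1-\theta}$ with $\theta=\tfrac1p$, $x=p\B_p[\gamma]$ and $y=\tfrac{p\lambda}{p-1}\Ll[\gamma]$, using $\B_p[\gamma]^{1/p}\Ll[\gamma]^{(p-1)/p}=\bar\B_p[\gamma]^{1/p}$. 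Equality holds exactly when $\B_p[\gamma]=\tfrac{\lambda}{p-1}\Ll[\gamma]$, which clarifies the geometric meaning of the hypothesis.

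The crucial observation is that the prescribed threshold is precisely the right-hand side above evaluated at the embeddedness level $\bar\B_p=2^p\varpi_p^*$; that is,
\[
2p\Big(\tfrac{\lambda}{p-1}\Big)^{\frac{p-1}{p}}(\varpi_p^*)^{\frac1p}=p\Big(\tfrac{\lambda}{p-1}\Big)^{\frac{p-1}{p}}(2^p\varpi_p^*)^{\frac1p}.
\]
To finish, I would argue exactly as in the length-preserving case: if the flow is stationary then $\gamma(t,\cdot)\equiv\gamma_0$ stays embedded, and otherwise the strict decrease $\B_p[\gamma(t,\cdot)]+\lambda\Ll[\gamma(t,\cdot)]<\B_p[\gamma_0]+\lambda\Ll[\gamma_0]$, combined with the hypothesis and the interpolation bound, forces $\bar\B_p[\gamma(t,\cdot)]<2^p\varpi_p^*$ for all $t>0$ after dividing by the common factor and raising to the $p$-th power. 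Corollary~\ref{cor: embedding below} then yields embeddedness for $t>0$, while the assumption on $\gamma_0$ handles $t=0$, covering all $t\in[0,T)$.

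I expect the only genuine content to be the sharp interpolation inequality together with the recognition that the threshold is tuned so that the monotone penalized energy controls $\bar\B_p$ from above; the remaining manipulations are elementary and essentially identical to the length-preserving theorem.
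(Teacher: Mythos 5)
Your proposal is correct and follows essentially the same route as the paper: the weighted AM--GM bound you derive is exactly Young's inequality as used in the paper's proof (equivalent after raising to the $p$-th power), and the conclusion is reached by feeding the monotone penalized energy through this interpolation into Corollary~\ref{cor: embedding below}. If anything, you spell out more explicitly than the paper that the interpolation inequality must be applied to $\gamma(t,\cdot)$ for each $t>0$, which is a welcome clarification of the paper's terse ``reduced to the length-preserving case.''
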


\begin{proof}
    From Young's inequality, $ab \leq \frac{1}{p}a^p + \frac{1}{p'}b^{p'}$ with $\frac{1}{p}+\frac{1}{p'}=1$, by choosing $a=\B_p[\gamma_0]^\frac{1}{p}$ and $b= (\lambda \Ll[\gamma_0])^{\frac{p-1}{p}} (p-1)^{\frac{p}{p-1}}$, we obtain
    \begin{equation*}
        \begin{split}
          \bar \B_p[\gamma_0] = \B_p[\gamma_0] \Ll[\gamma_0]^{p-1} \leq \left( \frac{p-1}{\lambda}\right)^{p-1} \frac{1}{p^p} (\B_p[\gamma_0] + \lambda \Ll[\gamma_0])^p \leq 2^p \varpi_p^*.
        \end{split}    
    \end{equation*}
    Thus the proof is reduced to the length-preserving case.
\end{proof}

Analogous to the results in \cite{miura_mueller_rupp}, we conjecture that the above criteria yield the optimal thresholds in dimension $n \geq 3$ for many concrete $p$-elastic flows studied in the literature. 
Establishing this rigorously would require suitable results on the continuous dependence of solutions on initial data, which, to the authors' knowledge, are currently only available for $p = 2$.

In addition, for dimension $n=2$, we expect a higher threshold similar to the energy $\bar{\B}_2$ of the two-teardrop curve $\gamma_{2T}$ in \cite{miura_mueller_rupp}.

\bibliography{refs}

@article {miuraclassification,
    AUTHOR = {Miura, Tatsuya and Yoshizawa, Kensuke},
     TITLE = {Complete classification of planar {$p$}-elasticae},
   JOURNAL = {Ann. Mat. Pura Appl. (4)},
  FJOURNAL = {Annali di Matematica Pura ed Applicata. Series IV},
    VOLUME = {203},
      YEAR = {2024},
    NUMBER = {5},
     PAGES = {2319--2356},
      ISSN = {0373-3114,1618-1891},
   MRCLASS = {49Q10 (33E05 53A04)},
       DOI = {10.1007/s10231-024-01445-z},
       URL = {https://doi.org/10.1007/s10231-024-01445-z},
}

@article {miura_LiYau,
    AUTHOR = {Miura, Tatsuya},
     TITLE = {Li-{Y}au type inequality for curves in any codimension},
   JOURNAL = {Calc. Var. Partial Differential Equations},
  FJOURNAL = {Calculus of Variations and Partial Differential Equations},
    VOLUME = {62},
      YEAR = {2023},
    NUMBER = {8},
     PAGES = {Paper No. 216, 28},
      ISSN = {0944-2669,1432-0835},
   MRCLASS = {53A04 (49Q10 53E40)},
       DOI = {10.1007/s00526-023-02559-7},
       URL = {https://doi.org/10.1007/s00526-023-02559-7},
}

@incollection {singer_lecturenotes,
    AUTHOR = {Singer, David A.},
     TITLE = {Lectures on elastic curves and rods},
 BOOKTITLE = {Curvature and variational modeling in physics and biophysics},
    SERIES = {AIP Conf. Proc.},
    VOLUME = {1002},
     PAGES = {3--32},
 PUBLISHER = {Amer. Inst. Phys., Melville, NY},
      YEAR = {2008},
      ISBN = {978-0-7354-0521-9},
   MRCLASS = {58E50 (37J35 49J05 53C20 74G65 74K10)},
MRREVIEWER = {Anders\ Linn\'er},
       DOI = {10.1063/1.2918095},
       URL = {https://doi.org/10.1063/1.2918095},
}

@article {langersinger_minimax,
    AUTHOR = {Langer, Joel and Singer, David A.},
     TITLE = {Curve straightening and a minimax argument for closed elastic
              curves},
   JOURNAL = {Topology},
  FJOURNAL = {Topology. An International Journal of Mathematics},
    VOLUME = {24},
      YEAR = {1985},
    NUMBER = {1},
     PAGES = {75--88},
      ISSN = {0040-9383},
   MRCLASS = {58E05 (53A04 58F17)},
MRREVIEWER = {A.\ J.\ Tromba},
       DOI = {10.1016/0040-9383(85)90046-1},
       URL = {https://doi.org/10.1016/0040-9383(85)90046-1},
}

@article {watanabe_flatcore,
    AUTHOR = {Watanabe, Kohtaro},
     TITLE = {Planar {$p$}-elastic curves and related generalized complete
              elliptic integrals},
   JOURNAL = {Kodai Math. J.},
  FJOURNAL = {Kodai Mathematical Journal},
    VOLUME = {37},
      YEAR = {2014},
    NUMBER = {2},
     PAGES = {453--474},
      ISSN = {0386-5991,1881-5472},
   MRCLASS = {49J10 (31B30 33E05)},
MRREVIEWER = {Teemu\ Lukkari},
       DOI = {10.2996/kmj/1404393898},
       URL = {https://doi.org/10.2996/kmj/1404393898},
}

@article {langer-singer_classification,
    AUTHOR = {Langer, Joel and Singer, David A.},
     TITLE = {Knotted elastic curves in {${\bf R}^3$}},
   JOURNAL = {J. London Math. Soc. (2)},
  FJOURNAL = {Journal of the London Mathematical Society. Second Series},
    VOLUME = {30},
      YEAR = {1984},
    NUMBER = {3},
     PAGES = {512--520},
      ISSN = {0024-6107,1469-7750},
   MRCLASS = {53A04 (49F99)},
MRREVIEWER = {R.\ Osserman},
       DOI = {10.1112/jlms/s2-30.3.512},
       URL = {https://doi.org/10.1112/jlms/s2-30.3.512},
}

@article{miura_pinned_p,
    author = {Miura, Tatsuya and Yoshizawa, Kensuke},
     title = "Pinned Planar p-Elasticae",
   journal = "Indiana Univ. Math. J.",
  fjournal = "Indiana University Mathematics Journal",
    volume = 73,
      year = 2024,
     issue = 6,
     pages = "2155--2208",
}

@article{miura_mueller_rupp,
    author = {Miura, Tatsuya and M\"uller, Marius and Rupp, Fabian},
         TITLE = {Optimal thresholds for preserving embeddedness of elastic
              flows},
   JOURNAL = {Amer. J. Math.},
  FJOURNAL = {American Journal of Mathematics},
    VOLUME = {147},
      YEAR = {2025},
    NUMBER = {1},
     PAGES = {33--80}
}

@techreport{Levien:EECS-2008-103,
    Author= {Levien, Raph},
    Title= {The elastica: a mathematical history},
    Year= {2008},
    Month= {Aug},
    Url= {http://www2.eecs.berkeley.edu/Pubs/TechRpts/2008/EECS-2008-103.html},
    Number= {UCB/EECS-2008-103},
}

@article {ARROYO2003339,
    AUTHOR = {Arroyo, J. and Garay, O. J. and Menc\'ia, J. J.},
     TITLE = {Closed generalized elastic curves in {$S^2(1)$}},
   JOURNAL = {J. Geom. Phys.},
  FJOURNAL = {Journal of Geometry and Physics},
    VOLUME = {48},
      YEAR = {2003},
    NUMBER = {2-3},
     PAGES = {339--353},
      ISSN = {0393-0440,1879-1662},
   MRCLASS = {58E10 (53A04 74K10)},
MRREVIEWER = {Joel\ L.\ Weiner},
       DOI = {10.1016/S0393-0440(03)00047-0},
       URL = {https://doi.org/10.1016/S0393-0440(03)00047-0},
}

@article {p-obstacle,
    AUTHOR = {Dall'Acqua, Anna and M\"uller, Marius and Okabe, Shinya and
              Yoshizawa, Kensuke},
     TITLE = {An obstacle problem for the {$p$}-elastic energy},
   JOURNAL = {Calc. Var. Partial Differential Equations},
  FJOURNAL = {Calculus of Variations and Partial Differential Equations},
    VOLUME = {63},
      YEAR = {2024},
    NUMBER = {6},
     PAGES = {Paper No. 145, 43},
}

@article {p-elastica_in_sphere,
    AUTHOR = {Gruber, Anthony and P\'ampano, \'Alvaro and Toda, Magdalena},
     TITLE = {Instability of closed {$p$}-elastic curves in {$\Bbb{S}^2$}},
   JOURNAL = {Anal. Appl. (Singap.)},
  FJOURNAL = {Analysis and Applications},
    VOLUME = {21},
      YEAR = {2023},
    NUMBER = {6},
     PAGES = {1533--1559},
      ISSN = {0219-5305,1793-6861},
   MRCLASS = {53A04 (53A05 53C42)},
MRREVIEWER = {Ergin\ Bayram},
       DOI = {10.1142/S0219530523500173},
       URL = {https://doi.org/10.1142/S0219530523500173},
}

@article{CANHAM197061_bloodcell,
title = {The minimum energy of bending as a possible explanation of the biconcave shape of the human red blood cell},
journal = {Journal of Theoretical Biology},
volume = {26},
number = {1},
pages = {61-81},
year = {1970},
issn = {0022-5193},
doi = {https://doi.org/10.1016/S0022-5193(70)80032-7},
url = {https://www.sciencedirect.com/science/article/pii/S0022519370800327},
author = {Canham, P.B.}
}

@article{Helfrich+1973+693+703,
url = {https://doi.org/10.1515/znc-1973-11-1209},
title = {Elastic Properties of Lipid Bilayers: Theory and Possible Experiments},
author = {Helfrich, W.},
pages = {693--703},
volume = {28},
number = {11-12},
journal = {Zeitschrift für Naturforschung C},
doi = {doi:10.1515/znc-1973-11-1209},
year = {1973},
lastchecked = {2024-10-01}
}

@article {ambrosio_image,
    AUTHOR = {Ambrosio, Luigi and Masnou, Simon},
     TITLE = {A direct variational approach to a problem arising in image
              reconstruction},
   JOURNAL = {Interfaces Free Bound.},
  FJOURNAL = {Interfaces and Free Boundaries. Mathematical Analysis,
              Computation and Applications},
    VOLUME = {5},
      YEAR = {2003},
    NUMBER = {1},
     PAGES = {63--81},
      ISSN = {1463-9963,1463-9971},
   MRCLASS = {94A08 (49J10 49J45 68U10)},
MRREVIEWER = {Guy\ Jumarie},
       DOI = {10.4171/IFB/72},
       URL = {https://doi.org/10.4171/IFB/72},
}

@article {p-elastic_flow_blatt,
    AUTHOR = {Blatt, Simon and Hopper, Christopher and Vorderobermeier,
              Nicole},
     TITLE = {A regularized gradient flow for the {$p$}-elastic energy},
   JOURNAL = {Adv. Nonlinear Anal.},
  FJOURNAL = {Advances in Nonlinear Analysis},
    VOLUME = {11},
      YEAR = {2022},
    NUMBER = {1},
     PAGES = {1383--1411},
      ISSN = {2191-9496,2191-950X},
   MRCLASS = {53E99 (53A04)},
MRREVIEWER = {Yunlong\ Yang},
       DOI = {10.1515/anona-2022-0244},
       URL = {https://doi.org/10.1515/anona-2022-0244},
}

@article {p-flow_blatt_minimizing_movement,
    AUTHOR = {Blatt, Simon and Hopper, Christopher and Vorderobermeier,
              Nicole},
     TITLE = {A minimising movement scheme for the {$p$}-elastic energy of
              curves},
   JOURNAL = {J. Evol. Equ.},
  FJOURNAL = {Journal of Evolution Equations},
    VOLUME = {22},
      YEAR = {2022},
    NUMBER = {2},
     PAGES = {Paper No. 41, 25},
      ISSN = {1424-3199,1424-3202},
   MRCLASS = {53E99 (53A04)},
MRREVIEWER = {Rongpei\ Huang},
       DOI = {10.1007/s00028-022-00791-w},
       URL = {https://doi.org/10.1007/s00028-022-00791-w},
}

@article{elastica_survey,
    author = {Miura, Tatsuya},
    title = {Elastic curves and self-intersections
},
    journal = {to appear in 2024 MATRIX Annals},
    year = {2024}
}

@article{elastica_influence,
author = {C. Truesdell},
title = {{The influence of elasticity on analysis: The classic heritage}},
volume = {9},
journal = {Bulletin (New Series) of the American Mathematical Society},
number = {3},
publisher = {American Mathematical Society},
pages = {293 -- 310},
year = {1983},
}

@article {miura_phase_transitions,
    AUTHOR = {Miura, Tatsuya},
     TITLE = {Elastic curves and phase transitions},
   JOURNAL = {Math. Ann.},
  FJOURNAL = {Mathematische Annalen},
    VOLUME = {376},
      YEAR = {2020},
    NUMBER = {3-4},
     PAGES = {1629--1674},
      ISSN = {0025-5831,1432-1807},
   MRCLASS = {49Q10 (53A04)},
MRREVIEWER = {Paolo\ Piccione},
       DOI = {10.1007/s00208-019-01821-8},
       URL = {https://doi.org/10.1007/s00208-019-01821-8},
}

@article {Okabe_p_flow,
    AUTHOR = {Okabe, Shinya and Pozzi, Paola and Wheeler, Glen},
     TITLE = {A gradient flow for the {$p$}-elastic energy defined on closed
              planar curves},
   JOURNAL = {Math. Ann.},
  FJOURNAL = {Mathematische Annalen},
    VOLUME = {378},
      YEAR = {2020},
    NUMBER = {1-2},
     PAGES = {777--828},
      ISSN = {0025-5831,1432-1807},
   MRCLASS = {35K92 (53A04 53E99)},
  MRREVIEWER = {Rongpei\ Huang},
       DOI = {10.1007/s00208-019-01885-6},
       URL = {https://doi.org/10.1007/s00208-019-01885-6},
}

@article {Okabe_p_flow2,
    AUTHOR = {Okabe, Shinya and Wheeler, Glen},
     TITLE = {The {$p$}-elastic flow for planar closed curves with constant
              parametrization},
   JOURNAL = {J. Math. Pures Appl. (9)},
  FJOURNAL = {Journal de Math\'ematiques Pures et Appliqu\'ees. Neuvi\`eme
              S\'erie},
    VOLUME = {173},
      YEAR = {2023},
     PAGES = {1--42},
      ISSN = {0021-7824,1776-3371},
   MRCLASS = {53E40 (49J52 49Q15 53A04 74B20)},
MRREVIEWER = {Laiyuan\ Gao},
       DOI = {10.1016/j.matpur.2023.02.001},
       URL = {https://doi.org/10.1016/j.matpur.2023.02.001},
}

@article {acerbi,
    AUTHOR = {Acerbi, Emilio and Mucci, Domenico},
     TITLE = {Curvature-dependent energies},
   JOURNAL = {Milan J. Math.},
  FJOURNAL = {Milan Journal of Mathematics},
    VOLUME = {85},
      YEAR = {2017},
    NUMBER = {1},
     PAGES = {41--69},
      ISSN = {1424-9286,1424-9294},
   MRCLASS = {49J45 (26A45 49Q15 65D18 94A13)},
MRREVIEWER = {Micol\ Amar},
       DOI = {10.1007/s00032-017-0265-x},
       URL = {https://doi.org/10.1007/s00032-017-0265-x},
}

@article {kawohl1,
    AUTHOR = {Ferone, Vincenzo and Kawohl, Bernd and Nitsch, Carlo},
     TITLE = {Generalized elastica problems under area constraint},
   JOURNAL = {Math. Res. Lett.},
  FJOURNAL = {Mathematical Research Letters},
    VOLUME = {25},
      YEAR = {2018},
    NUMBER = {2},
     PAGES = {521--533},
      ISSN = {1073-2780,1945-001X},
   MRCLASS = {49Q10 (51M16)},
MRREVIEWER = {Vanja\ Nikoli\'c},
       DOI = {10.4310/MRL.2018.v25.n2.a9},
       URL = {https://doi.org/10.4310/MRL.2018.v25.n2.a9},
}

@article {masnou_levellines,
    AUTHOR = {Masnou, Simon},
     TITLE = {Disocclusion: a variational approach using level lines},
   JOURNAL = {IEEE Trans. Image Process.},
  FJOURNAL = {IEEE Transactions on Image Processing},
    VOLUME = {11},
      YEAR = {2002},
    NUMBER = {2},
     PAGES = {68--76},
      ISSN = {1057-7149,1941-0042},
   MRCLASS = {68T45},
       DOI = {10.1109/83.982815},
       URL = {https://doi.org/10.1109/83.982815},
}

@article {Pozzetta1,
    AUTHOR = {Pozzetta, Marco},
     TITLE = {A varifold perspective on the {$p$}-elastic energy of planar
              sets},
   JOURNAL = {J. Convex Anal.},
  FJOURNAL = {Journal of Convex Analysis},
    VOLUME = {27},
      YEAR = {2020},
    NUMBER = {3},
     PAGES = {845--879},
      ISSN = {0944-6532,2363-6394},
   MRCLASS = {49Q15 (49Q10 49Q20 53A07)},
MRREVIEWER = {Erik\ J.\ Balder},
}

@article {Japan_p,
    AUTHOR = {Shioji, Naoki and Watanabe, Kohtaro},
     TITLE = {Total {$p$}-powered curvature of closed curves and flat-core
              closed {$p$}-curves in {${\rm S}^2(G)$}},
   JOURNAL = {Comm. Anal. Geom.},
  FJOURNAL = {Communications in Analysis and Geometry},
    VOLUME = {28},
      YEAR = {2020},
    NUMBER = {6},
     PAGES = {1451--1487},
      ISSN = {1019-8385,1944-9992},
   MRCLASS = {53E10 (53A04)},
MRREVIEWER = {Chun-Chi\ Lin},
       DOI = {10.4310/CAG.2020.v28.n6.a6},
       URL = {https://doi.org/10.4310/CAG.2020.v28.n6.a6},
}

@article {infinity_elastica_manifold,
    AUTHOR = {Gallagher, Ed and Moser, Roger},
     TITLE = {The {$\infty $}-elastica problem on a {R}iemannian manifold},
   JOURNAL = {J. Geom. Anal.},
  FJOURNAL = {Journal of Geometric Analysis},
    VOLUME = {33},
      YEAR = {2023},
    NUMBER = {7},
     PAGES = {Paper No. 226, 26},
      ISSN = {1050-6926,1559-002X},
       DOI = {10.1007/s12220-023-01281-2},
       URL = {https://doi.org/10.1007/s12220-023-01281-2},
}

@article {infinite_elastica_classification,
    AUTHOR = {Moser, Roger},
     TITLE = {Structure and classification results for the {$\infty$}-elastica problem},
   JOURNAL = {Amer. J. Math.},
  FJOURNAL = {American Journal of Mathematics},
    VOLUME = {144},
      YEAR = {2022},
    NUMBER = {5},
     PAGES = {1299--1329},
      ISSN = {0002-9327,1080-6377},
   MRCLASS = {49K05},
MRREVIEWER = {Graziano\ Crasta},
}

@article{kaya2024curvesminimaxcurvature,
      title={Curves of Minimax Curvature}, 
      author={C. Yalçın Kaya and Lyle Noakes and Philip Schrader},
      year={2024},
      journal={arXiv:2404.12574},
}

@article {p_network_flow,
    AUTHOR = {Novaga, Matteo and Pozzi, Paola},
     TITLE = {A second order gradient flow of {$p$}-elastic planar networks},
   JOURNAL = {SIAM J. Math. Anal.},
  FJOURNAL = {SIAM Journal on Mathematical Analysis},
    VOLUME = {52},
      YEAR = {2020},
    NUMBER = {1},
     PAGES = {682--708},
       DOI = {10.1137/19M1262292},
       URL = {https://doi.org/10.1137/19M1262292},
}

@article {dallacqua_minimal_elastic_networks,
    AUTHOR = {Dall'Acqua, Anna and Novaga, Matteo and Pluda, Alessandra},
     TITLE = {Minimal elastic networks},
   JOURNAL = {Indiana Univ. Math. J.},
  FJOURNAL = {Indiana University Mathematics Journal},
    VOLUME = {69},
      YEAR = {2020},
    NUMBER = {6},
     PAGES = {1909--1932},
       DOI = {10.1512/iumj.2020.69.8036},
       URL = {https://doi.org/10.1512/iumj.2020.69.8036},
}

\end{document}